\begin{document}

\newcommand{ \bl}{\color{blue}}
\newcommand {\rd}{\color{red}}
\newcommand{ \bk}{\color{black}}
\newcommand{ \gr}{\color{OliveGreen}}
\newcommand{ \mg}{\color{RedViolet}}

\newcommand{\norm}[1]{||#1||}
\newcommand{\normo}[1]{|#1|}
\newcommand{\secn}[1]{\addtocounter{section}{1}\par\medskip\noindent
 {\large \bf \thesection. #1}\par\medskip\setcounter{equation}{0}}

\newcommand{\secs}[1]{\addtocounter {section}{1}\par\medskip\noindent
 {\large \bf  #1}\par\medskip\setcounter{equation}{0}}
\renewcommand{\theequation}{\thesection.\arabic{equation}}

\setlength{\baselineskip}{16pt}

\newtheorem{theorem}{Theorem}[section]
\newtheorem{lemma}{Lemma}[section]
\newtheorem{proposition}{Proposition}[section]
\newtheorem{definition}{Definition}[section]
\newtheorem{example}{Example}[section]
\newtheorem{corollary}{Corollary}[section]

\newtheorem{remark}{Remark}[section]

\numberwithin{equation}{section}

\def\p{\partial}
\def\I{\textit}
\def\R{\mathbb R}
\def\C{\mathbb C}
\def\u{\underline}
\def\l{\lambda}
\def\a{\alpha}
\def\O{\Omega}
\def\e{\epsilon}
\def\ls{\lambda^*}
\def\D{\displaystyle}
\def\wyx{ \frac{w(y,t)}{w(x,t)}}
\def\imp{\Rightarrow}
\def\tE{\tilde E}
\def\tX{\tilde X}
\def\tH{\tilde H}
\def\tu{\tilde u}
\def\d{\mathcal D}
\def\aa{\mathcal A}
\def\DH{\mathcal D(\tH)}
\def\bE{\bar E}
\def\bH{\bar H}
\def\M{\mathcal M}
\renewcommand{\labelenumi}{(\arabic{enumi})}

\def\disp{\displaystyle}
\def\undertex#1{$\underline{\hbox{#1}}$}
\def\card{\mathop{\hbox{card}}}
\def\sgn{\mathop{\hbox{sgn}}}
\def\exp{\mathop{\hbox{exp}}}
\def\OFP{(\Omega,{\cal F},\PP)}
\newcommand\JM{Mierczy\'nski}
\newcommand\RR{\ensuremath{\mathbb{R}}}
\newcommand\CC{\ensuremath{\mathbb{C}}}
\newcommand\QQ{\ensuremath{\mathbb{Q}}}
\newcommand\ZZ{\ensuremath{\mathbb{Z}}}
\newcommand\NN{\ensuremath{\mathbb{N}}}
\newcommand\PP{\ensuremath{\mathbb{P}}}
\newcommand\abs[1]{\ensuremath{\lvert#1\rvert}}

\newcommand\normf[1]{\ensuremath{\lVert#1\rVert_{f}}}
\newcommand\normfRb[1]{\ensuremath{\lVert#1\rVert_{f,R_b}}}
\newcommand\normfRbone[1]{\ensuremath{\lVert#1\rVert_{f, R_{b_1}}}}
\newcommand\normfRbtwo[1]{\ensuremath{\lVert#1\rVert_{f,R_{b_2}}}}
\newcommand\normtwo[1]{\ensuremath{\lVert#1\rVert_{2}}}
\newcommand\norminfty[1]{\ensuremath{\lVert#1\rVert_{\infty}}}
\newcommand{\ds}{\displaystyle}

\title{Uniqueness and stability of coexistence states in two species models with/without chemotaxis on bounded heterogeneous environments}
\author{
Tahir  Bachar Issa and Wenxian Shen\thanks{Partially supported by the NSF grant DMS--1645673} \\
Department of Mathematics and Statistics\\
Auburn University\\
Auburn University, AL 36849\\
U.S.A. }

\date{}
\maketitle

\noindent {\bf Abstract.}  The current paper is concerned with the asymptotic dynamics
 of two species competition systems with/without chemotaxis in heterogeneous media. In the previous work \cite{ITBWS17a}, we find
  conditions on the parameters in such  systems  for the persistence of the two species and the existence of positive coexistence states.
 In this paper, we  find  conditions on  the parameters   for the uniqueness and stability of positive coexistence states of such systems.
 The established results are  new even for the two species competition systems without chemotaxis but with space dependent coefficients.

\medskip

\noindent {\bf Key words.} Ultimates bounds of solutions, asymptotic stability and uniqueness of coexistence states.

\medskip

\noindent {\bf 2010 Mathematics Subject Classification.} 35B40, 35B41, 37J25, 92C17

\section{Introduction and the statements of the main results}
\label{S:intro}

  In the current paper, we consider the following two species parabolic-parabolic-elliptic system with heterogeneous Lotka-Volterra type competition terms,
 \begin{equation}
 \label{u-v-w-eq00}
\begin{cases}
u_t=d_1\Delta u-\chi_1\nabla\cdot (u \nabla w)+u\Big(a_0(t,x)-a_1(t,x)u-a_2(t,x)v\Big),\quad x\in \Omega\cr
v_t=d_2\Delta v-\chi_2\nabla \cdot(v \nabla w)+v\Big(b_0(t,x)-b_1(t,x)u-b_2(t,x)v\Big),\quad x\in \Omega\cr
0=d_3\Delta w+k u+lv-\lambda w,\quad x\in \Omega \cr
\frac{\p u}{\p n}=\frac{\p v}{\p n}=\frac{\p w}{\p n}=0,\quad x\in\p\Omega,
\end{cases}
 \end{equation}
where $\Omega \subset \mathbb{R}^n(n\geq 1)$ is a bounded domain with smooth boundary, $u(x,t)$ and $v(x,t)$ represent the population densities
of two  mobile species and $w(x,t)$ is the population density of some chemical substance,    $d_i$ ($i=1,2,3$) are positive constants, $\chi_1,\chi_2,k,l,\lambda$ are nonnegative constants, and $a_i(t,x)$ and $b_i(t,x)$ ($i=0,1,2$) are positive smooth functions.

{ Chemotaxis refers  the tendency of cells, bacteria, or   organisms to orient the direction of their movements toward the increasing or decreasing concentration of a signaling chemical substance. It has a crucial role in a wide range of biological phenomena such as immune system response, embryo development, tumor growth, etc. (see  \cite{ISM04}).  Recent studies describe also macroscopic process such as population dynamics or gravitational collapse, etc.,  in terms of chemotaxis  (see \cite{DAL1991}).  Because of its  crucial  role in the above mentioned process and others, chemotaxis  has attracted great attention in both   biological and mathematical communities since the  pioneering works \cite{KS1970, KS71}  by  Keller and Segel in 1970's, in which Keller and Segel  proposed   a celebrated mathematical model (K-S model) made up of two parabolic equations to describe chemotaxis.}

 {System \eqref{u-v-w-eq00} is a
Keller-Segel type model of chemotaxis, modeling  the population dynamics of two
competitive biological species attracted by the same nutrition subject to Lotka-
Volterra dynamics.
The term $-\chi_1\nabla\cdot (u \nabla w)$  with $\chi_1>0$  in the first equation of \eqref{u-v-w-eq00} reflects the influence of the chemical substance on
the movement of species $u$, and the term $-\chi_2\nabla\cdot (v \nabla w)$  with $\chi_2>0$  in the second equation of \eqref{u-v-w-eq00} reflects the influence of the chemical substance on
the movement of species $v$.  Note that \eqref{u-v-w-eq00} with $a_i(\cdot,\cdot)$ and $b_i(\cdot,\cdot)$ ($i=0,1,2)$ being constant
functions was
proposed by Tello and Winkler in \cite{TW12} to study the population dynamics of two
competitive species attracted by the same nutrition. In reality, the environments of many living organisms are spatially and temporally heterogeneous. It is then important both biologically and mathematically to
study the dynamics of the two species chemotaxis model \eqref{u-v-w-eq00} with $a_i(\cdot,\cdot)$ and $b_i(\cdot,\cdot)$ ($i=0,1,2)$ being
time and space dependent functions.
}

 In the absence of chemotaxis, that is, $\chi_1=\chi_2=0$, the dynamics of \eqref{u-v-w-eq00} is determined by the first two equations, that is, the following two species competition system,
\begin{equation}
\begin{cases}
\label{u-v-eq00}
u_t=d_1\Delta u+u\Big(a_0(t,x)-a_1(t,x)u-a_2(t,x)v\Big),\quad x\in \Omega\cr
v_t=d_2\Delta v+v\Big(b_0(t,x)-b_1(t,x)u-b_2(t,x)v\Big),\quad x\in \Omega\cr
\frac{\p u}{\p n}=\frac{\p v}{\p n}=0,\quad x\in\p\Omega.
\end{cases}
 \end{equation}
For biological reasons, we are only interested in nonnegative solutions of \eqref{u-v-w-eq00} and \eqref{u-v-eq00}.

{Consider \eqref{u-v-eq00}. It is well known that for any given initial time $t_0\in\RR$ and  nonnegative initial functions $u_0(x)$ and $v_0(x)$ in $C(\bar \Omega)$, the solution $(u(t,x),v(t,x))$
 of \eqref{u-v-eq00} with $(u(t_0,x), v(t_0,x))=(u_0(x),v_0(x))$, denoted by $(u(t,x;t_0,u_0,v_0),v(t,x;t_0,u_0,v_0))$,
  exists globally
(that is,  it exists for all
$t\ge t_0$).
Among central dynamical issues  in \eqref{u-v-eq00} are  persistence, coexistence, and extinction, which
 have been widely studied (see  \cite{Ahm}, \cite{FuMa97}, \cite{HeSh},  \cite{HeSh02}, etc.). For example, in \cite{Ahm},
  the author provided sufficient conditions for the convergence and ultimate bounds of spatially homogeneous solutions of \eqref{u-v-eq00} with $a_i(t,x)$ and
 $b_i(t,x)$ ($i=0,1,2,3$) being independent of $x$ and satisfying certain conditions, which implies the persistence.  In \cite{HeSh02},
 the authors provided sufficient conditions for the uniform persistence, coexistence, and extinction in\eqref{u-v-eq00} with $a_i(t,x)$ and $b_i(t,x)$ ($i=0,1,2$) being almost periodic in $t$. For example, it is proved in \cite{HeSh02} that uniform persistence  occurs in \eqref{u-v-eq00} if the coefficients satisfy
 \begin{equation}
 \label{stability-cond-1-eq1}
 a_{0,\inf}>\frac{a_{2,\sup}b_{0,\sup}}{b_{2,\inf}},\quad b_{0,\inf}>\frac{a_{0,\sup}b_{1,\sup}}{a_{1,\inf}}
 \end{equation}
 where
 $$
 a_{i,\inf}=\inf_{t\in\RR,x\in\bar \Omega}a_i(t,x),\quad a_{i,\sup}=\sup_{t\in\RR,x\in\bar \Omega}a_i(t,x),
 $$
 and $b_{i,\inf}$ and $b_{i,\sup}$ are defined similarly. Note that the occurrence of  persistence in \eqref{u-v-eq00} implies the existence of coexistence states.
   However, there is little study on the uniqueness and stability of coexistence states
 of \eqref{u-v-eq00} in the case that $a_i$ and $b_i$ ($i=0,1,2$) depend on $x$.  The uniqueness and stability of coexistence states of \eqref{u-v-eq00} proved in Corollary \ref{stability-cor} and  in Theorem \ref{thm-nonlinear-stability-001}(3) are new. It should be pointed out that, in the study of \eqref{u-v-eq00}, the so called competitive comparison principle plays
 an important role.}

 { Consider \eqref{u-v-w-eq00}. It is important to investigate the role of chemotaxis in determining the dynamical behavior of the
 solutions with nonnegative initial functions. In particular, it is important to investigate the following questions: whether the presence of chemotaxis affects the global existence
 of solutions with nonnegative initial functions, or whether  finite time blow-up occurs; how to
 identify the circumstances  under which
 persistence or extinction occurs;  in the case that persistence occurs, whether the system has coexistence states, and if so, whether the coexistence states are unique and stable; whether chemotaxis induces new solution patterns;
 etc. Note that chemotaxis induces several difficulties in the study of \eqref{u-v-w-eq00}, including the lack of the so called competitive comparison principle.}

Several authors have studied the issues mentioned in the above for system \eqref{u-v-w-eq00} with constant coefficients
(see \cite{TBJLMM16, ITBRS17,   NT13,   SBRWSa,  StTeWi, TW12, WaYaZh}).
{ For example, in \cite{TW12}, the authors studied the global stability of positive constant coexistence state under some assumption on
the coefficients.
In \cite{StTeWi},  the authors considered  the competitive
exclusion  under some complicated smallness assumptions on
the chemotaxis rates. In \cite{WaYaZh}, the authors obtained nonconstant positive coexistence states induced by chemotaxis
for parameters in certain region.
 In \cite{ITBRS17}, the authors considered a more general competitive-cooperative chemotaxis system with nonlocal terms logistic sources and proved both the phenomena of coexistence and of exclusion for parameters in some natural range. }

 However, there is little study on these asymptotic dynamical issues for \eqref{u-v-w-eq00}
with general time and space dependent coefficients. { Besides  the difficulties induced by the chemotaxis,
the time and space dependence of the coefficients induces additional difficulties in the study of \eqref{u-v-w-eq00}. For example,
in the constant coefficients case, a coexistence state of \eqref{u-v-eq00} is also a coexistence state of \eqref{u-v-w-eq00}
(in this case, \eqref{u-v-eq00} has at most one constant coexistence state), and hence persistence automatically implies
the existence of coexistence states.  However, a coexistence state of \eqref{u-v-eq00} may not be a coexistence state of \eqref{u-v-w-eq00} when the coefficients are space dependent. Therefore,
to study the dynamics of \eqref{u-v-w-eq00} with time and space dependent coefficients, new techniques need to
be developed.}

In the very recent paper \cite{ITBWS17a}, we studied the global existence, persistence, existence of coexistence states, and extinction in \eqref{u-v-w-eq00}.
{ In particular, we found various parameter regions for the global existence, persistence, existence of coexistence states, and extinction in \eqref{u-v-w-eq00} (see Theorems  \ref{thm-global-000}-\ref{thm-entire-002} in the following for some of the results proved in \cite{ITBWS17a}).}
The objective of the current paper is to find parameter regions for the uniqueness and stability of coexistence states in \eqref{u-v-w-eq00}.
Observe that, even for \eqref{u-v-eq00},  the uniqueness and stability of coexistence states  has been studied only for some special cases.
 We obtain some new results about the uniqueness and stability of coexistence states in  \eqref{u-v-eq00} {(see Remark \ref{rmk-stability}(1) and Corollary \ref{stability-cor})}.
 { Observe also that the results established in this paper provide  various conditions  under which \eqref{u-v-w-eq00} has a unique stable coexistence state.   All the conditions depend on the chemotaxis sensitivity coefficients $\chi_1$ and $\chi_2$, which reflect some effects  of chemotaxis on the uniqueness and stability of coexistence states. There are still several important problems to be studied, for example,
 whether finite time blow-up occurs in \eqref{u-v-w-eq00} when $\chi_1$ and $\chi_2$ are not small;
 whether  chemotaxis makes species easier to persist or go extinct; what patterns may be induced by  chemotaxis when the coefficients are nonconstant; etc.  We plan to study  these interesting problem in our future works.}
 The reader is referred to \cite{ITBWS16} and to \cite{SBRWSb,SBRWSc} for the existing works for one species chemotaxis models with general time and space dependent coefficients on bounded domains and on unbounded domains, respectively.

In the following, we state the main results of the current paper. To do so,
  we first introduce some notations, assumptions,  and recall some results obtained in \cite{ITBWS17a}.  Let
 $$ {  C^+(\bar{\Omega})=\left\{u\in C(\bar{\Omega})  \,|\, u \geq 0 \right \}.}
   $$
 The following two assumptions are introduced in \cite{ITBWS17a} for the global existence of classical solutions of \eqref{u-v-w-eq00} with given positive initial functions.

 \smallskip
\noindent {\bf (H1)} {\it $a_i(t,x)$, $b_i(t,x)$, $\chi_i$  and  $d_3$, $k$ and $l$ satisfy
\begin{equation}
\label{global-existence-cond-eq00}
a_{1,\inf}>\frac{ k\chi_1}{d_3},\quad a_{2,\inf}\geq \frac{l \chi_1}{d_3}, \quad b_{1,\inf}\geq \frac{k \chi_2}{d_3},\quad \text{and} \quad b_{2,\inf}>\frac{ l\chi_2}{d_3}.
 \end{equation}
}

\noindent {\bf (H2)} {\it $a_i(t,x)$, $b_i(t,x)$, $\chi_i$  and  $d_3$, $k$ and $l$ satisfy
 \begin{equation}
\label{global-existence-cond-eq01}
a_{1,\inf}>\frac{ k\chi_1}{d_3},\quad  b_{2,\inf}>\frac{l \chi_2}{d_3},\quad {\rm and}\quad
\big(a_{1,\inf}-\frac{ k\chi_1}{d_3}\big)\big( b_{2,\inf}-\frac{ l\chi_2}{d_3}\big)>\frac{k \chi_2}{d_3}\frac{l \chi_1}{d_3}.
 \end{equation}
 }

It is proved  in \cite{ITBWS17a} that

\begin{theorem}{ (Global Existence)}
\label{thm-global-000}
\begin{itemize}
\item[(1)] Assume that { (H1)} holds. Then for any $t_0\in\RR$ and  $u_0,v_0 \in { C^+(\bar{\Omega})},$
$\eqref{u-v-w-eq00}$ has a unique  bounded  global  classical solution $(u(x,t;t_0,u_0,v_0),v(x,t;t_0,u_0,v_0)$, $w(x,t;t_0,u_0,v_0))$  which satisfies that
\begin{equation}\label{global-existence-eq00}
\lim_{t\to t_0+}\|u(\cdot,t;t_0,u_0,v_0)-u_0(\cdot)\|_{C^0(\bar\Omega)}+\|v(\cdot,t;t_0,u_0,v_0)-v_0(\cdot)\|_{C^0(\bar\Omega)}=0.
\end{equation}
 Moreover,  for any $\epsilon>0$, there is {$T(u_0,v_0,\epsilon)\ge 0$}  such that
\[0\leq u(x,t;t_0,u_0,v_0) \leq \bar A_1+\epsilon  \]
and
\[0\leq v(x,t;t_0,u_0,v_0)\leq \bar A_2+\epsilon  \]
for all $t\ge t_0+T(u_0,v_0,\epsilon)$, where
\begin{equation}
\label{I1-I2-overbar}
\bar A_1= \frac{a_{0,\sup}}{a_{1,\inf}-\frac{k\chi_1}{d_3}},\quad \bar A_2=\frac{b_{0,\sup}}{b_{2,\inf}-\frac{l\chi_2}{d_3}}.
\end{equation}
If $u_0\le \bar A_1+\epsilon$, $v_0\le \bar A_2+\epsilon$, then $T(u_0,v_0,\epsilon)$ can be chosen to be zero.

\item[(2)] Assume that { (H2)} holds. Then for any $t_0\in\RR$ and   $u_0,v_0 \in { C^+(\bar{\Omega})}$,
$\eqref{u-v-w-eq00}$ has a unique  bounded  global  classical solution $(u(x,t;t_0,u_0,v_0),v(x,t;t_0,u_0,v_0)$, $w(x,t;t_0,u_0,v_0))$  which satisfies \eqref{global-existence-eq00}.  Moreover,  for any $\epsilon>0$, there is  $T(u_0,v_0,\epsilon)>0$ such that
$$0\leq u(x,t;t_0,u_0,v_0) \leq \bar B_1+\epsilon
$$
and
$$ 0\leq v(x,t;t_0,u_0,v_0)\leq \bar B_2+\epsilon
$$
for all $t\ge t_0+T(u_0,v_0,\epsilon)$, where
 \begin{equation}
 \label{A1-overbar-0}\bar {B_1}=\frac{a_{0,\sup}(b_{2,\inf}-\frac{l\chi_2}{d_3})+\frac{l\chi_1}{d_3}b_{0,\sup}}{(a_{1,\inf}-\frac{k\chi_1}{d_3})(b_{2,\inf}-\frac{l\chi_2}{d_3})
 -\frac{lk\chi_1\chi_2}{d_3^2}}
 \end{equation}
and
\begin{equation}
\label{A2-overbar-0}
\bar {B_2}=\frac{b_{0,\sup}(a_{1,\inf}-\frac{k\chi_1}{d_3})+\frac{k\chi_2}{d_3}a_{0,\sup}}{(a_{1,\inf}-\frac{k\chi_1}{d_3})(b_{2,\inf}
-\frac{l\chi_2}{d_3})-\frac{lk\chi_1\chi_2}{d_3^2}}.
\end{equation}
If $u_0\le \bar B_1+\epsilon$, $v_0\le \bar B_2+\epsilon$, $T(u_0,v_0,\epsilon)$ can be chosen to be zero.
\end{itemize}
\end{theorem}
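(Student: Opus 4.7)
The plan is to eliminate $w$ via the elliptic equation and then reduce both global existence and the ultimate bounds to pointwise $L^\infty$ comparison --- with a scalar logistic ODE in case (H1), and with a planar cooperative ODE system in case (H2).

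Local existence of a unique classical solution on a maximal interval $[t_0,T_{\max})$ is standard: since $w$ is determined linearly from $(u,v)$ through the third equation, elliptic regularity gives $w\in W^{2,p}$ whenever $(u,v)\in L^p$, and a Banach fixed point argument on the $(u,v)$-part yields a local $C^{2,1}$ solution. The key algebraic step is to substitute $-d_3\Delta w=ku+lv-\lambda w$ into the cross-diffusion term,
\[
-\chi_1\nabla\cdot(u\nabla w) = -\chi_1\nabla u\cdot\nabla w + (\chi_1/d_3)\,u\,(ku+lv-\lambda w),
\]
so that the $u$-equation acquires effective self-rate $a_1-k\chi_1/d_3$ and effective cross-rate $a_2-l\chi_1/d_3$ (and symmetrically for $v$). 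The elliptic maximum principle gives $w\ge 0$, so the sign-definite term $-\chi_1\lambda uw/d_3$ can be dropped when bounding $u$ from above.

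To bound $\|u(\cdot,t)\|_\infty$, evaluate the rewritten non-divergence-form equation at a point where $u(\cdot,t)$ attains its maximum; there $\nabla u=0$, $\Delta u\le 0$, and the remaining inequality reads
\[
\tfrac{d^+}{dt}\|u(\cdot,t)\|_\infty \le \|u\|_\infty\bigl(a_{0,\sup} - (a_{1,\inf}-k\chi_1/d_3)\|u\|_\infty + (l\chi_1/d_3 - a_{2,\inf})\|v\|_\infty\bigr),
\]
together with its $v$-counterpart. Under (H1) the cross-coefficient is $\le 0$, so the inequality decouples into a scalar logistic one and classical ODE comparison gives $\limsup\|u(\cdot,t)\|_\infty\le\bar A_1$, with the explicit time $T(u_0,v_0,\epsilon)$ and the "starts-below-then-stays-below" statement both read off from the explicit logistic trajectory. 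Under (H2) the cross-coefficients may be positive, so one keeps both inequalities, discards only the non-positive $-a_{2,\inf}v$ and $-b_{1,\inf}u$ contributions, and arrives at the planar comparison system
\[
U'=U\bigl(a_{0,\sup}-(a_{1,\inf}-k\chi_1/d_3)U+(l\chi_1/d_3)V\bigr),\quad V'=V\bigl(b_{0,\sup}+(k\chi_2/d_3)U-(b_{2,\inf}-l\chi_2/d_3)V\bigr).
\]
The determinant condition in (H2) is exactly the statement that the associated interaction matrix is a nonsingular $M$-matrix, so this cooperative system has a unique positive equilibrium $(\bar B_1,\bar B_2)$ that is globally asymptotically stable in the open first quadrant, and componentwise comparison delivers the claimed bounds.

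Once these $L^\infty$ bounds are established on $[t_0,T_{\max})$, elliptic regularity places $\|\nabla w\|_\infty$ under control in terms of $\|u\|_\infty$ and $\|v\|_\infty$, so the standard extension criterion for the local theory forces $T_{\max}=+\infty$ and the solutions are global and classical. The main technical obstacle is justifying the "derivative of the maximum" step rigorously, since $\|u(\cdot,t)\|_\infty$ is only Lipschitz in $t$; the cleanest remedy is to first carry out the argument with $L^p$ norms and send $p\to\infty$, or equivalently to invoke an evolving-maxima comparison lemma. This point is slightly more delicate in case (H2), where both components must be compared simultaneously and the cooperativity of the planar ODE must be used to propagate the order relation.
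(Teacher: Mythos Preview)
Your proposal is correct and follows the same strategy as the original proof in \cite{ITBWS17a} (the present paper only cites the result without reproving it). The comparison ODE system \eqref{ode00} reproduced in Section~2 is precisely your framework: for (H2) its $(\bar u,\bar v)$-part with $\underline u=\underline v=0$ is exactly your cooperative planar system with equilibrium $(\bar B_1,\bar B_2)$, and for (H1) the scalar logistic comparison you describe appears verbatim at \eqref{iter1-eq01}--\eqref{iter1-eq03} in the proof of Lemma~\ref{iteration-lm-00}.
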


The reader is referred to \cite[Remark 1.1]{ITBWS17a} for some remarks  about Theorem \ref{thm-global-000}, and the assumptions (H1) and (H2).
{ The following concepts of coexistence state and persistence are introduced in \cite{ITBWS17a}.}

\begin{definition}
\label{coex-persist-def}
A solution $(u(x,t),v(x,t),w(x,t))$ of \eqref{u-v-w-eq00} defined for all $t\in\RR$ is called an {\rm entire solution}.
A {\rm coexistence state} of \eqref{u-v-w-eq00} is an entire positive solution $(u^{**}(x,t),v^{**}(x,t),w^{**}(x,t))$ with
\begin{equation*}
\label{coexistence-eq}
\inf_{t\in\RR,x\in\bar\Omega} u^{**}(x,t)>0,\quad \inf_{t\in\RR,x\in\bar\Omega} v^{**}(x,t)>0.
\end{equation*}
We say that {\rm persistence} occurs in \eqref{u-v-w-eq00} if there is $\eta>0$ such that for any $t_0\in\RR$ and $u_0,v_0\in C(\bar\Omega)$ with $u_0> 0$ and  $v_0> 0$, there is $\tau(t_0,u_0,v_0)>0$ such that
\begin{equation*}
\label{persistence-eq}
u(x,t;t_0,u_0,v_0)\ge \eta,\quad v(x,t;t_0,u_0,v_0)\ge \eta\quad \forall\,\, t\ge t_0+\tau(t_0,u_0,v_0).
\end{equation*}
\end{definition}

{ The  following two assumptions are introduced in  \cite{ITBWS17a} for the persistence in \eqref{u-v-w-eq00}. }

\medskip
\noindent {\bf (H3)} {\it $a_i(t,x)$, $b_i(t,x)$, $\chi_i$ and  $d_3$, $k$ and $l$ satisfy (H1) and
 \begin{equation}
\label{invariant-rectangle-cond-eq-1}
a_{0,\inf}>a_{2,\sup}\bar A_2\,\,\,\,\,\text{and} \,\,  \,\,\, b_{0,\inf}>b_{1,\sup}\bar A_1.
 \end{equation}}

\noindent {\bf (H4)} {\it $a_i(t,x)$, $b_i(t,x)$, $\chi_i$ and  $d_3$, $k$ and $l$ satisfy (H2) and
 \begin{equation}
\label{invariant-rectangle-cond-eq-2}
a_{0,\inf}>\big(a_{2,\sup}-\frac{\chi_1 l}{d_3}\big)_+\bar B_2+\frac{\chi_1 l}{d_3}\bar B_2\,\,\, \,\,\text{and} \,\,\,\,\,  b_{0,\inf}> \big(b_{1,\sup}
-\frac{\chi_2 k}{d_3}\big)_+\bar B_1+\frac{\chi_2 k}{d_3}\bar B_1,
 \end{equation}
 where $(\cdots)_+$ represents the positive part of the expression inside the brackets.
}

Note that both (H3) and (H4) imply \eqref{stability-cond-1-eq1}. As it is mentioned in the above, \eqref{stability-cond-1-eq1}
are sufficient conditions for the persistence in \eqref{u-v-eq00} to occur (see \cite[Theorem B]{HeSh02}).
The following theorems  on persistence and the existence of coexistence states of  \eqref{u-v-w-eq00} are proved in \cite{ITBWS17a}.

\begin{theorem} [Persistence]
\label{thm-entire-001}
\begin{itemize}
\item[(1)]
 Assume (H3). Then there are $\underbar A_1>0$ and $\underbar A_2>0$ such that for  any $\epsilon>0$ and  $u_0,v_0\in { C^+(\bar{\Omega})}$ with $u_0,v_0\not \equiv 0$, there exists $t_{\epsilon,u_0,v_0}$ such that
\begin{equation}\label{attracting-set-eq000}
\underbar A_1 \le u(x,t;t_0,u_0,v_0) \le \bar{A_1}+\epsilon,\,\,\,\underbar A_2 \le v(x,t;t_0,u_0,v_0) \le \bar{A_2}+\epsilon
\end{equation}
for all $x\in\bar\Omega$, { $t_0\in\RR$, and $t\ge t_0+t_{\epsilon,u_0,v_0}$}.

\item[(2)]
 Assume (H4). Then there are $\underbar B_1>0$ and $\underbar B_2>0$ such that  for  any $\epsilon>0$  and $u_0,v_0\in { C^+(\bar{\Omega})}$ with $u_0,v_0\not \equiv 0$, there exists $t_{\epsilon,u_0,v_0}$ such {  \eqref{attracting-set-eq000} } holds with $\underbar A_1$,
  $\bar A_1$, $\underbar A_2$, and $\bar A_2$ being replaced by $\underbar B_1$, $\bar B_1$,   $\underbar B_2$, and  $\bar B_2$, respectively.
\end{itemize}
\end{theorem}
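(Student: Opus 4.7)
The upper bounds in \eqref{attracting-set-eq000} are a direct consequence of Theorem \ref{thm-global-000}, so the core task is to construct uniform positive lower bounds $\underbar A_1,\underbar A_2$ (respectively $\underbar B_1,\underbar B_2$) that are independent of $t_0$ and of the specific nontrivial initial data. I plan to carry out the argument in detail for part (1); part (2) follows the same pattern with estimates adapted to (H2) and (H4).

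First I would absorb the chemotactic term into reaction-like expressions. Using the elliptic equation to write $d_3\Delta w=\lambda w-ku-lv$ and expanding $\nabla\cdot(u\nabla w)=\nabla u\cdot\nabla w+u\Delta w$, the $u$-equation becomes
\begin{equation*}
u_t = d_1\Delta u - \chi_1\nabla u\cdot\nabla w + u\Big[a_0 - \tfrac{\chi_1\lambda}{d_3}w - \big(a_1-\tfrac{k\chi_1}{d_3}\big)u - \big(a_2-\tfrac{l\chi_1}{d_3}\big)v\Big],
\end{equation*}
and the $v$-equation transforms analogously. Under (H1) the coefficients of $u^2$ and $uv$ in this expression are non-positive, and standard elliptic theory applied to the $w$-equation shows that $w$, $\nabla w$, and $\Delta w$ are uniformly bounded in terms of the $L^\infty$ bounds of $u$ and $v$. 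Hence, after the time $T(u_0,v_0,\epsilon)$ given by Theorem \ref{thm-global-000}(1), the $u$-equation can be regarded as a linear parabolic equation in $u$ with drift and zero-order coefficients that are uniformly bounded along the orbit.

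Next I would establish the lower bound by a contradiction and compactness argument. Suppose no uniform $\underbar A_1>0$ exists. Then there are sequences $t_0^n\in\RR$, nontrivial $(u_0^n,v_0^n)$, and times $\tau_n\to\infty$ such that $\min_x u(x,t_0^n+\tau_n;t_0^n,u_0^n,v_0^n)\to 0$. Parabolic regularity for the transformed equation, together with the absorbing rectangle $[0,\bar A_1+\epsilon]\times[0,\bar A_2+\epsilon]$ and the elliptic bounds on $w$, allows one to extract a time-translation limit which is an entire solution $(u^*,v^*,w^*)$ of \eqref{u-v-w-eq00} whose $u$-component vanishes at some interior point. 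The strong maximum principle applied to the transformed linear $u$-equation then forces $u^*\equiv 0$, and consequently $(v^*,w^*)$ is a bounded nonnegative entire solution of the scalar chemotaxis system obtained by setting $u=0$, for which the scalar analog of Theorem \ref{thm-global-000} gives $v^*(x,t)\le \bar A_2$.

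The contradiction comes from (H3): the inequality $a_{0,\inf}>a_{2,\sup}\bar A_2$ ensures that the zero-order coefficient in the $u$-equation linearized around $(0,v^*,w^*)$ is bounded below by a strictly positive constant, so the principal Lyapunov exponent of the associated nonautonomous linear parabolic operator is strictly positive; this makes $u=0$ linearly unstable relative to the $(v^*,w^*)$ background, contradicting the fact that the approximating sequence was driving $u$ to zero. A symmetric argument using $b_{0,\inf}>b_{1,\sup}\bar A_1$ delivers $\underbar A_2>0$. The hardest step is the rigorous verification of this instability in the space-time heterogeneous setting, because the classical principal-eigenvalue machinery does not apply directly; I would address this either via the principal Lyapunov exponent theory for nonautonomous parabolic problems of \JM--Shen type, or by constructing an explicit subsolution of the form $\delta\varphi(x)$ for small $\delta>0$, where $\varphi$ is the positive eigenfunction of a suitably averaged elliptic operator, and showing that this subsolution is repelled from zero uniformly along the orbit, thereby yielding the desired $\underbar A_1$.
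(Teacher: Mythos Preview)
This theorem is not proved in the present paper; it is quoted from \cite{ITBWS17a}. Judging from the preliminary material recalled here (Lemmas~\ref{lem-1-ode00} and~\ref{lem-1-ode000}), the argument in \cite{ITBWS17a} is constructive: one sandwiches the PDE solution between solutions $(\bar u,\underbar u,\bar v,\underbar v)$ of the four--dimensional ODE system \eqref{ode00} and then analyzes that ODE directly to produce explicit lower bounds $\underbar A_i$ (resp.\ $\underbar B_i$). Your route is genuinely different: a compactness--contradiction scheme that extracts an entire limit solution with $u^*\equiv 0$ and then appeals to linear instability of the semi-trivial state $(0,v^*,w^*)$. Your key observation---that at $u^*\equiv 0$ the chemical $w^*$ is fed only by $v^*$, so $\lambda w^*\le l\bar A_2$ and the linearized zero-order coefficient is bounded below by $a_{0,\inf}-a_{2,\sup}\bar A_2>0$ under (H3) alone---is correct and is exactly what separates (H3) from the stronger (H5) used later in Lemma~\ref{iteration-lm-00}.

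There is, however, a real gap at the last step. Linear instability of $(0,v^*,w^*)$ does not by itself contradict $u^n\to 0$: along the approximating sequence both $\max_x u^n$ and $\min_x u^n$ tend to zero on every compact translated time window, so an exponential-growth subsolution launched from $\min_x u^n(\cdot,\tau_n-T)$ yields no uniform positive floor at time $\tau_n$. Closing the argument requires an additional ingredient you have not supplied: either (i) an abstract uniform-persistence lift from ``weak repeller'' to ``uniform strong repeller'' in the sense of Hale--Waltman/Thieme, which needs a compact attractor on the boundary $\{u=0\}$ together with an acyclicity condition, or (ii) a quantitative Harnack-type control of $\max u/\min u$ uniform along orbits, so that smallness of $\min u$ at late times can be pulled back to contradict $u_0\not\equiv 0$. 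Neither your principal Lyapunov exponent suggestion nor the $\delta\varphi$ subsolution provides this: both still presuppose a lower bound on $u$ at \emph{some} time that is uniform in $n$, which is precisely the statement in dispute. The ODE-comparison approach of \cite{ITBWS17a} sidesteps this difficulty because the sandwiching functions $\underbar u(t),\underbar v(t)$ depend only on $(\bar u_0,\underbar u_0,\bar v_0,\underbar v_0)$, so their eventual positivity is a finite-dimensional ODE question.
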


\begin{theorem} [Coexistence]
\label{thm-entire-002}
\begin{itemize}
\item[(1)]
 Assume (H3). Then
there is a coexistence state $(u^{**}(x,t)$, $v^{**}(x,t)$, $ w^{**}(x,t))$ of \eqref{u-v-w-eq00}.
Moreover, the following hold.
\begin{itemize}
\item[(i)] If there is $T>0$ such that $a_i(t+T,x)=a_i(t,x),$ $b_i(t+T,x)=b_i(t,x)$ for $i=0,1,2$, then \eqref{u-v-w-eq00} has a $T$-periodic coexistence state $(u^{**}(x,t),v^{**}(x,t), w^{**}(x,t))$, that is,
 $$(u^{**}(x,t+T), v^{**}(x,t+T), w^{**}(x,t+T))=(u^{**}(x,t), v^{**}(x,t), w^{**}(x,t)).
 $$

\item[(ii)] If   $a_i(t,x)\equiv a_i(x),$ $b_i(t,x)\equiv b_i(x)$ for $i=0,1,2$, then  \eqref{u-v-w-eq00} has a steady state coexistence state
 $$(u^{**}(t,x), v^{**}(t,x),w^{**}(t,x))\equiv (u^{**}(x), v^{**}(x),w^{**}(x)).
 $$

\item[(iii)]  If  $a_i(t,x)\equiv a_i(t),$ $b_i(t,x)=b_i(t)$ for $i=0,1,2$,   then \eqref{u-v-w-eq00} has a  spatially homogeneous coexistence state
$$(u^{**}(x,t),v^{**}(x,t), w^{**}(x,t))\equiv (u^{**}(t),v^{**}(t), w^{**}(t))$$
 with $w^{**}(t)=ku^{**}(t)+lv^{**}(t)$, and if $a_i(t),$ $b_i(t)$ $(i=0,1,2)$ are periodic or almost periodic, so is   $(u^{**}(t),v^{**}(t),w^{**}(t))$.
\end{itemize}
\item[(2)]
 Assume (H4). Then there is a coexistence state $(u^{**}(x,t),v^{**}(x,t), w^{**}(x,t))$ of \eqref{u-v-w-eq00} { which satisfies (i)-(iii) of (1).}
\end{itemize}
\end{theorem}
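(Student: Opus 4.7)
My plan is to construct the coexistence state by a pullback--limit argument that converts the persistence result of Theorem \ref{thm-entire-001} into an entire positive solution, and then to refine this construction to obtain the extra structure claimed in (i)--(iii).

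For part (1) under (H3), fix $\epsilon>0$ small and any $u_0,v_0\in C^+(\bar\Omega)$ with $u_0,v_0\not\equiv 0$. For each $n\in\NN$ consider the solution $(u_n,v_n,w_n)(x,t):=(u,v,w)(x,t;-n,u_0,v_0)$, which exists globally for $t\ge -n$ by Theorem \ref{thm-global-000}. Theorem \ref{thm-entire-001}(1) guarantees that for each fixed $t\in\RR$ and all $n$ sufficiently large, $(u_n(\cdot,t),v_n(\cdot,t))$ lies in the rectangle $[\underbar A_1,\bar A_1+\epsilon]\times[\underbar A_2,\bar A_2+\epsilon]$. Hence $(u_n,v_n)$ is uniformly bounded in $L^\infty(\bar\Omega\times[-N,N])$ for every $N$ once $n$ is large. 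I then bootstrap regularity: $L^p$ theory for the elliptic equation $0=d_3\Delta w_n+ku_n+lv_n-\lambda w_n$ with Neumann data yields a uniform $W^{2,p}$ and hence $C^{1+\alpha}(\bar\Omega)$ bound on $w_n(\cdot,t)$ that is uniform in $t\in[-N,N]$; with $\nabla w_n$ thus Hölder continuous, the first two equations of \eqref{u-v-w-eq00} become linear parabolic equations for $u_n$ and $v_n$ with Hölder coefficients, so parabolic Schauder estimates give uniform $C^{2+\alpha,1+\alpha/2}$ bounds on $\bar\Omega\times[-N+1,N-1]$. A diagonal subsequence extraction produces a classical entire solution $(u^{**},v^{**},w^{**})$ of \eqref{u-v-w-eq00}. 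The persistence bounds pass to the limit, giving $\inf_{\RR\times\bar\Omega}u^{**}\ge\underbar A_1>0$ and $\inf_{\RR\times\bar\Omega}v^{**}\ge\underbar A_2>0$, so this is a coexistence state.

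For the refinements: in case (i), I choose $t_n=-nT$ in the pullback above; since the coefficients are $T$-periodic, $(u_n(\cdot,t+T),v_n(\cdot,t+T),w_n(\cdot,t+T))$ is exactly the pullback solution starting at $t_0=-(n+1)T$, so the subsequential limit is automatically $T$-periodic. Equivalently, one may apply Schauder's fixed point theorem to the Poincaré map $\Phi_T$ on a convex, compact, forward-invariant subset of $C^+(\bar\Omega)^2$ obtained by iterating $\Phi_T$ on the attracting rectangle and passing to the closed convex hull of a forward orbit. In case (ii), the time-autonomous case, time translation is a symmetry of \eqref{u-v-w-eq00}, so every time translate of $(u^{**},v^{**},w^{**})$ is again such an entire solution; extracting a further limit along $s_k\to\infty$ of $(u^{**}(\cdot,\cdot+s_k),v^{**}(\cdot,\cdot+s_k))$ after integrating in time (or applying Schauder directly to the solution operator of the stationary elliptic system on the same attracting convex set) gives a stationary coexistence state. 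In case (iii), where the coefficients depend only on $t$, spatially constant positive functions form an invariant manifold for \eqref{u-v-w-eq00} with $w^{**}(t)=(ku^{**}(t)+lv^{**}(t))/\lambda$, so the pullback reduces to a planar nonautonomous ODE confined to a rectangle, and the standard theory of skew-product semiflows over the hull of the coefficients yields a coexistence state that inherits the periodicity or almost periodicity of $a_i,b_i$.

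The hardest step is the compactness for the pullback limit, precisely because \eqref{u-v-w-eq00} lacks a competitive comparison principle and the chemotaxis drift $\nabla w$ cannot be directly bounded from the parabolic equations alone. Everything must be driven by bootstrapping the $L^\infty$ persistence bound of Theorem \ref{thm-entire-001} through elliptic regularity for $w$ and then parabolic Schauder for $(u,v)$, with care that the estimates are uniform on finite time intervals as $t_0=-n\to -\infty$ and that the regularity tail near $t=-n$ does not pollute the bound on $[-N,N]$. Part (2) under (H4) is handled by the identical machinery, with the rectangle $[\underbar B_1,\bar B_1+\epsilon]\times[\underbar B_2,\bar B_2+\epsilon]$ supplied by Theorem \ref{thm-entire-001}(2) playing the role of the rectangle used under (H3).
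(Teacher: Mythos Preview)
The present paper does not prove Theorem~\ref{thm-entire-002}; it is quoted from the companion paper \cite{ITBWS17a} (see the sentence immediately preceding Theorem~\ref{thm-entire-001}). Hence there is no proof here to compare your attempt against. That said, your pullback--limit strategy (start at $t_0=-n$, use the uniform $L^\infty$ bounds from Theorem~\ref{thm-entire-001}, bootstrap elliptic regularity for $w$ into parabolic Schauder bounds for $(u,v)$, extract a diagonal subsequence) is exactly the standard route to such existence results and is almost certainly the argument carried out in \cite{ITBWS17a}; the periodic refinement via the Poincar\'e map and the spatially homogeneous refinement via the ODE reduction are likewise standard and match what the paper's Remark~\ref{rmk-stability}(1) and Lemma~\ref{persistence-lm6} indicate.

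Two small points. First, in (iii) you correctly write $w^{**}(t)=(ku^{**}(t)+lv^{**}(t))/\lambda$; the theorem statement drops the $\lambda$, but the paper itself uses your formula in Remark~\ref{rmk-stability}(1), so this is a typo in the statement, not an error on your side. Second, your argument for (ii) is the weakest link: translating the entire solution in time and extracting a further limit does not by itself single out a stationary solution---the limit could be another time-dependent entire solution. The parenthetical alternative you mention (apply a fixed-point theorem directly to the stationary elliptic system, using the invariant rectangle as the convex set) is the clean way to do it, and you should commit to that rather than the time-shift heuristic.
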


The reader is referred to \cite[Remark 1.2]{ITBWS17a}
for some remarks about the assumptions (H3), (H4), and Theorems \ref{thm-entire-001} and \ref{thm-entire-002}.

 We now  state the  results of the current paper on the  stability and uniqueness of coexistence states in \eqref{u-v-w-eq00}.
 For convenience, we first introduce the following assumptions.

 \medskip

 \noindent {\bf (H5)} {\it  Assume (H1) and
\begin{equation}
\label{aux-stability-eq1}
a_{0,\inf}> a_{2,\sup}\bar A_2+k\frac{\chi_1 }{d_3}\bar A_1,\quad b_{0,\inf}>b_{1,\sup}\bar A_1+l\frac{\chi_2 }{d_3}\bar A_2.
\end{equation}}

 \noindent {\bf (H6)} {\it Assume (H2) and
\begin{equation}
\label{aux-stability-eq2}
a_{0,\inf}>{ (a_{2,\sup}+l\frac{\chi_1}{d_3})}\bar B_2+k\frac{\chi_1}{d_3}\bar B_1,\quad b_{0,\inf}>{ (b_{1,\sup}+k\frac{\chi_2}{d_3})}\bar B_1+l\frac{\chi_2}{d_3}\bar B_2.
\end{equation}}

 \noindent {\bf (H7)} {\it $a_i(t,x)\equiv a_i(t)$ and $b_i(t,x)\equiv b_i(t)$ ($i=0,1,2$) satisfy \eqref{stability-cond-1-eq1}
and
\begin{equation}
\label{stability-cond-1-eq2}
\inf_t \big\{a_1(t)-b_1(t)\big\} >2{ \frac{k}{d_3}(\chi_1+\chi_2)},
\quad
\inf_t \big\{b_2(t)-a_2(t)\big\} >2{\frac{l}{d_3}(\chi_1+\chi_2)}.
\end{equation}}

\begin{remark}
\label{remarks-on-rk}
\begin{itemize}
\item[(1)]  (H5) implies   (H3)  and (H6) implies (H4).

\item[(2)]   When $\chi_1=\chi_2=0$, (H5) and (H6) are the same, and
both  \eqref{aux-stability-eq1}  and \eqref{aux-stability-eq2}  become \eqref{stability-cond-1-eq1}.

\item[(3)] { When $k=l=d_3=1,$ $a_0=a_1=\mu_1,a_2=\mu_1\tilde{a_1},$ $b_0=b_2=\mu_1,$ and $b_1=\mu_2\tilde{a_2}$, (H7) become $2(\chi_1+\chi_2)+\mu_2\tilde{a_2}<\mu_1$ and $2(\chi_1+\chi_2)+\mu_1\tilde{a_1}<\mu_2.$ Thus  our result in Theorem \ref{thm-nonlinear-stability-001}(3)  recovers the stability result in \cite{TW12}.}

\end{itemize}
\end{remark}

 By Theorem \ref{thm-entire-001}, assuming
 (H3) (resp.,  (H4)), for any $\epsilon>0$, $[\underbar A_1,\bar A_1+\epsilon]\times [\underbar A_2,\bar A_2+\epsilon]$
 (resp., $[\underbar B_1,\bar B_1+\epsilon]\times [\underbar B_2,\bar B_2+\epsilon]$) is an attracting rectangle for \eqref{u-v-w-eq00}.
 The first main theorem of the current paper is about optimal attracting rectangles for \eqref{u-v-w-eq00} under the assumption (H5) (resp., (H6)).

 \begin{theorem}[Optimal attracting rectangle]
 \label{attracting-rectangle-thm} For given $u_0,v_0 \in { C^+(\bar{\Omega})},$ let $\overline{u}_0=\max_{x \in \bar \Omega}u_0(x),$ $\underline{u}_0=\min_{x \in \bar \Omega}u_0(x),$ $\overline{v}_0=\max_{x \in \bar \Omega}v_0(x)$ , $\underline{v}_0=\min_{x \in \bar \Omega}v_0(x)$.

 \begin{itemize}
 \item[(1)] Assume (H5) and that the following system has a unique solution $(\bar r_1,\bar r_2,\underbar r_1,\underbar r_2)$
 \begin{equation}
 \label{attracting-rectabgle-eq1}
 \begin{cases}
    (a_{1,\inf}-k\frac{\chi_1}{d_3}) \bar r_1={a_{0,\sup}-a_{2,\inf}\underbar  r_2-k\frac{\chi_1}{d_3}\underbar  r_1}\cr
   (b_{2,\inf}-l\frac{\chi_2}{d_3}) \bar r_2={b_{0,\sup}-b_{1,\inf}\underbar  r_1-k\frac{\chi_1}{d_3}\underbar  r_2}\cr
 (a_{1,\sup}-k\frac{\chi_1}{d_3})\underbar  r_1={a_{0,\inf}-a_{2,\sup}\bar  r_2-k\frac{\chi_1}{d_3}\bar  r_1}\cr
(b_{2,\sup}-l\frac{\chi_2}{d_3})\underbar  r_2={b_{0,\inf}-b_{1,\sup}\bar  r_1-l\frac{\chi_2}{d_3}\bar  r_2}.
\end{cases}
 \end{equation}
 Then $0<\underbar r_1\le \bar r_1$, $0<\underbar r_2 \le \bar r_2$, and
   for any $\epsilon>0$, $t_0\in\mathbb{R}$, and  $u_0,v_0\in C^0(\bar \Omega)$ with $\inf u_0>0$, $\inf v_0> 0,$   there exists $ t_{\epsilon,\overline u_0,\overline v_0,\underline u_0,\underline v_0}$ such that
\begin{equation}\label{attracting-set-eq00}
\begin{cases}
0<\underbar r_1{ -\epsilon} \le u(x,t;t_0,u_0,v_0) \le \bar r_1+\epsilon\cr
0<\underbar r_2{ -\epsilon} \le v(x,t;t_0,u_0,v_0) \le \bar r_2+\epsilon,
\end{cases}
\end{equation}
for all $x\in\bar\Omega$ { and $t\ge t_0+t_{\epsilon,\overline u_0,\overline v_0,\underline u_0,\underline v_0}$}. Furthermore
\begin{equation}\label{invariant-set-eq00}
  \underbar r_1\leq u_0\leq \bar r_1 \,\,\text{and}\,\, \underbar r_2\leq v_0\leq \bar r_2
\end{equation}
implies
\begin{equation}\label{invariant-set-eq01}
  \underbar r_1\leq u(x,t;t_0,u_0,v_0)\leq \bar r_1 \,\,\text{and}\,\, \underbar r_2\leq v(x,t;t_0,u_0,v_0)\leq \bar r_2 \,\, \forall t\geq t_0.
\end{equation}

 \item[(2)] Assume (H6) and that there is a unique solution $(\bar s_1,\bar s_1,\underbar s_1,\underbar s_2)$ of the following system,
 \begin{equation}
 \label{attracrting-rectangle-eq2}
 \begin{cases}
 \bar s_1=\frac{\left(a_{0,\sup}-{ (a_{2,\inf}+l\frac{\chi_1}{d_3})}{\underbar s_2}-k\frac{\chi_1}{d_3}{\underbar s_1}\right)(b_{2,\inf}-l\frac{\chi_2}{d_3})}{(a_{1,\inf}-k\frac{\chi_1}{d_3})(b_{2,\inf}-l\frac{\chi_2}{d_3})-lk\frac{\chi_1\chi_2}
{d_3^2}}+\frac{\frac{l\chi_1}{d_3}\left(b_{0,\sup}-{ (b_{1,\inf}+k\frac{\chi_2}{d_3}) }{\underbar s_1}-l\frac{\chi_2}{d_3}{\underbar s_2}\right)}{(a_{1,\inf}-k\frac{\chi_1}{d_3})(b_{2,\inf}-l\frac{\chi_2}{d_3})-lk\frac{\chi_1\chi_2}{d_3^2}}\cr
\bar s_2=\frac{\left(b_{0,\sup}-{ (b_{1,\inf}+k\frac{\chi_2}{d_3})}{\underbar s_1}-l\frac{\chi_2}{d_3}{\underbar s_2}\right)(a_{1,\inf}-k\frac{\chi_1}{d_3})} {(a_{1,\inf}-k\frac{\chi_1}{d_3})(b_{2,\inf}-l\frac{\chi_2}{d_3})-lk\frac{\chi_1\chi_2}{d_3^2}}+\frac{\frac{k\chi_2}{d_3}\left(a_{0,\sup}
-{ (a_{2,\inf}+l\frac{\chi_1}{d_3}) } {\underbar s_2}-k\frac{\chi_1}{d_3}{\underbar s_1}\right)}{(a_{1,\inf}-k\frac{\chi_1}{d_3})(b_{2,\inf}-l\frac{\chi_2}{d_3})-lk\frac{\chi_1\chi_2}{d_3^2}}\cr
\underbar s_1=\frac{\left(a_{0,\inf}-{ (a_{2,\sup}+l\frac{\chi_1}{d_3})}{\bar s_2}-k\frac{\chi_1}{d_3}{\bar s_1}\right)(b_{2,\sup}-l\frac{\chi_2}{d_3})}{(a_{1,\sup}-k\frac{\chi_1}{d_3})(b_{2,\sup}-l\frac{\chi_2}{d_3})-lk\frac{\chi_1\chi_2}
{d_3^2}}+\frac{\frac{l\chi_1}{d_3}\left(b_{0,\inf}-{ (b_{1,\sup}+k\frac{\chi_2}{d_3}) }{\bar s_1}-l\frac{\chi_2}{d_3}{\bar s_2}\right)}{(a_{1,\sup}-k\frac{\chi_1}{d_3})(b_{2,\sup}-l\frac{\chi_2}{d_3})-lk\frac{\chi_1\chi_2}{d_3^2}}\cr
\underbar s_2=\frac{\left(b_{0,\inf}-{ (b_{1,\sup}+k\frac{\chi_2}{d_3})}{\bar s_1}-l\frac{\chi_2}{d_3}{\bar s_2}\right)(a_{1,\sup}-k\frac{\chi_1}{d_3})} {(a_{1,\sup}-k\frac{\chi_1}{d_3})(b_{2,\sup}-l\frac{\chi_2}{d_3})-lk\frac{\chi_1\chi_2}{d_3^2}}+\frac{\frac{k\chi_2}{d_3}\left(a_{0,\inf}
-{ (a_{2,\sup}+l\frac{\chi_1}{d_3}) } {\bar s_2}-k\frac{\chi_1}{d_3}{\bar s_1}\right)}{(a_{1,\sup}-k\frac{\chi_1}{d_3})(b_{2,\sup}-l\frac{\chi_2}{d_3})-lk\frac{\chi_1\chi_2}{d_3^2}}.
\end{cases}
\end{equation}
  Then $0<\underbar s_1\le \bar s_1$, $0<\underbar s_2  \le \bar s_2$, and
   for any $\epsilon>0$, $t_0\in\mathbb{R}$, and  $u_0,v_0\in C^0(\bar \Omega)$ with $\inf u_0>0$, $\inf v_0> 0,$   there exists $ t_{\epsilon,\overline u_0,\overline v_0,\underline u_0,\underline v_0}$,   such that \eqref{attracting-set-eq00}-\eqref{invariant-set-eq01}
   hold with $\bar r_1,\bar r_2,\underbar r_1$, and $\underbar r_2$ being replaced by  $\bar s_1,\bar s_2,\underbar s_1$, and $\underbar s_2$,
   respectively.
 \end{itemize}
 \end{theorem}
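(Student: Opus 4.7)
The proof hinges on rewriting the chemotactic drift as a reaction term. Substituting $d_3\Delta w=\lambda w-ku-lv$ into the $u$-equation of \eqref{u-v-w-eq00} gives
\[
u_t=d_1\Delta u-\chi_1\nabla u\cdot\nabla w+u\Bigl(a_0-\tfrac{\chi_1\lambda}{d_3}w-(a_1-\tfrac{k\chi_1}{d_3})u-(a_2-\tfrac{l\chi_1}{d_3})v\Bigr),
\]
and analogously for $v$. The key auxiliary ingredient is the pointwise bound obtained by applying the elliptic maximum and minimum principles to the $w$-equation at each fixed $t$:
\[
k\min_{\bar\Omega}u(\cdot,t)+l\min_{\bar\Omega}v(\cdot,t)\le\lambda w(x,t)\le k\max_{\bar\Omega}u(\cdot,t)+l\max_{\bar\Omega}v(\cdot,t),\quad x\in\bar\Omega,
\]
which converts the nonlocal dependence on $w$ into a bound depending only on the extrema of $u$ and $v$.

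For part (1), \eqref{attracting-rectabgle-eq1} is linear in $(\bar r_1,\bar r_2,\underbar r_1,\underbar r_2)$. Solving equations~(3)--(4) for $(\underbar r_1,\underbar r_2)$ in terms of $(\bar r_1,\bar r_2)$ and substituting into~(1)--(2) yields a map $T$ on $(\bar r_1,\bar r_2)$; under (H1) its diagonal coefficients are positive, and (H5) (which strengthens (H3) by the chemotactic corrections $k\chi_1\bar A_1/d_3$ and $l\chi_2\bar A_2/d_3$) is exactly what makes $T$ a contraction preserving $(0,\bar A_1]\times(0,\bar A_2]$ with $\underbar r_1,\underbar r_2>0$. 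The unique fixed point furnishes the desired solution; the ordering $\underbar r_i\le\bar r_i$ follows by comparing the infimum and supremum forms of each equation.

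Invariance of the rectangle is proved by contradiction. At a hypothetical first exit time $t_1$ with, say, $u(x_1,t_1)=\bar r_1=\max_{\bar\Omega}u(\cdot,t_1)$, one has $\nabla u(x_1,t_1)=0$ and $\Delta u(x_1,t_1)\le 0$; using the rewritten $u$-equation together with $a_{2,\inf}\ge l\chi_1/d_3$ from (H1) and the elliptic lower bound $\lambda w(x_1,t_1)\ge k\underbar r_1+l\underbar r_2$ yields, after simplification,
\[
u_t(x_1,t_1)\le\bar r_1\Bigl(a_{0,\sup}-(a_{1,\inf}-\tfrac{k\chi_1}{d_3})\bar r_1-a_{2,\inf}\underbar r_2-\tfrac{k\chi_1}{d_3}\underbar r_1\Bigr)=0
\]
by the first equation of \eqref{attracting-rectabgle-eq1}; a perturbation $\bar r_1\mapsto\bar r_1+\delta$ gives strictness and the desired contradiction. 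The three symmetric cases (min of $u$, max and min of $v$) use the other three equations of \eqref{attracting-rectabgle-eq1} and the companion bound $\lambda w\le k\bar r_1+l\bar r_2$. For the attracting property \eqref{attracting-set-eq00}, I start from the rough envelope $\underbar A_i,\bar A_i+\epsilon$ of Theorem \ref{thm-entire-001} and iterate: the same max/min analysis applied to $\max_{\bar\Omega}u(\cdot,t)$ and its three companions, viewed as Lipschitz functions of $t$, produces differential inequalities that refine the envelope by one application of $T$, and the contraction from the previous step forces convergence of the iterates to $(\bar r_1,\bar r_2,\underbar r_1,\underbar r_2)$.

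Part (2) follows the same three-step scheme under (H2)/(H6). The only new feature is that the sign conditions $a_{2,\inf}\ge l\chi_1/d_3$ and $b_{1,\inf}\ge k\chi_2/d_3$ no longer hold, so at a maximum of $u$ the contributions $-a_2v$ and $+\chi_1 lv/d_3$ must be bounded separately (by $-a_{2,\inf}\underbar s_2$ and $+\chi_1 l\bar s_2/d_3$), which produces the cross-term $l\chi_1\bar s_2/d_3$ in the upper equation for $\bar s_1$; Cramer's rule applied to the resulting $2\times2$ system in $(\bar s_1,\bar s_2)$ (and similarly for $(\underbar s_1,\underbar s_2)$) yields the explicit form of \eqref{attracrting-rectangle-eq2}. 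The main obstacle I anticipate is the iterative envelope refinement: since $\max_{\bar\Omega}u(\cdot,t)$ is only Lipschitz in $t$, the differential inequality at an extremum requires a Danskin-type selection or a mollification argument, and at each stage the elliptic $w$-bound has to be re-verified with the current envelope. The algebraic and invariance steps reduce to concrete linear algebra and routine maximum-principle computations respectively.
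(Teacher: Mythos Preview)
Your proposal is broadly sound and shares the paper's iterative-refinement philosophy, but two points diverge from the paper and one of them is misconceived.

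First, you spend effort arguing that (H5) makes the fixed-point map $T$ a contraction. This is both unnecessary and unjustified: uniqueness of the solution to \eqref{attracting-rectabgle-eq1} is an \emph{assumed hypothesis} of the theorem, not a consequence of (H5). (The appendix gives the generic algebraic condition $\bar h_1\bar p_1\ne\bar h_3\bar p_3$, $\underbar h_1\underbar p_1\ne\underbar h_3\underbar p_3$ for uniqueness, independent of (H5).) The paper instead obtains the ordering $0<\underbar r_i\le\bar r_i$ constructively: starting from $\underbar r_i^0=0$, $\bar r_i^0=\bar A_i$, it defines the explicit monotone iteration \eqref{r-iteration-eq} (Lemma~\ref{iteration-lm-00}), shows the sequences are nested, and identifies their limit with the assumed unique solution; the ordering then follows from monotonicity. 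You should drop the contraction claim and recover the ordering this way.

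Second, for both invariance and attraction the paper takes a route that entirely avoids the Lipschitz/Danskin obstacle you anticipate. Rather than differentiating $t\mapsto\max_{\bar\Omega}u(\cdot,t)$ or arguing at a first exit point, it applies the parabolic comparison principle directly: from
\[
u_t\le d_1\Delta u-\chi_1\nabla w\cdot\nabla u+u\bigl(a_{0,\sup}-(a_{1,\inf}-\tfrac{k\chi_1}{d_3})u\bigr)
\]
one compares $u$ with the explicit solution of the logistic ODE $u'=u(a-bu)$, which immediately gives the eventual upper bound and, since constants are steady states of that ODE, invariance of the threshold $\bar r_1^n$; lower bounds follow symmetrically once the upper ones are in hand. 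Iterating this comparison over $n$ (Lemmas~\ref{iteration-lm-00}--\ref{iteration-lm-01}) and passing to the limit gives \eqref{attracting-set-eq00}--\eqref{invariant-set-eq01} with no need to handle Dini derivatives of the spatial envelope. Your first-exit argument for invariance would also work, but the attraction step is substantially smoother via parabolic comparison than via differential inequalities on $\max_{\bar\Omega}u$.
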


\begin{remark}
\label{attracting-rectangle-rk}
\begin{itemize}
\item[(1)]   Under the assumptions in Theorem \ref{attracting-rectangle-thm}(1),
$(\bar r_1,\bar r_2)$ is the unique positive equilibrium of the  system,
\begin{equation*}
\begin{cases}
u_t=u\Big( a_{0,\sup}-\big(a_{1,\inf}-k\frac{\chi_1 }{d_3}\big)u -a_{2,\inf}{\underbar r_2}-k\frac{\chi_1 }{d_3}{\underbar r_1}\Big)\\
v_t=v\Big( b_{0,\sup}-\big(b_{2,\inf}-l\frac{\chi_2 }{d_3}\big)v -b_{1,\inf}{\underbar r_1}-l\frac{\chi_2 }{d_3}{\underbar r_2}\Big),
\end{cases}
\end{equation*}
hence,
$$
\bar r_1<\bar A_1,\quad \bar r_2<\bar A_2,
$$
and
$(\underbar r_1,\underbar r_2)$  is the unique positive equilibrium of the system,
\begin{equation*}
\begin{cases}
u_t=u\Big( a_{0,\inf}-\big(a_{1,\sup}-k\frac{\chi_1 }{d_3}\big)u -a_{2,\sup}{\bar r_2}-k\frac{\chi_1 }{d_3}{\bar r_1}\Big)\\
v_t=v\Big( b_{0,\inf}-\big(b_{2,\sup}-l\frac{\chi_2 }{d_3}\big)v -b_{1,\sup}{\bar r_1}-l\frac{\chi_2 }{d_3}{\bar r_2}\Big).
\end{cases}
\end{equation*}

\item[(2)]  Under the assumptions in Theorem \ref{attracting-rectangle-thm}(2),
$(\bar s_1,\bar s_2)$  is the unique positive equilibrium of the system,
\begin{equation*}
\begin{cases}
u_t=u\Big(a_{0,\sup}-{(a_{2,\inf}+l\frac{\chi_1}{d_3})} {\underbar s_2}-k\frac{\chi_1}{d_3}{\underbar s_1}-(a_{1,\inf}-k\frac{\chi_1}{d_3}) u+l\frac{\chi_1}{d_3}v\Big) \\
v_t= v\Big(b_{0,\sup}-{ (b_{1,\inf}+k\frac{\chi_2}{d_3})} {\underbar s_1}-l\frac{\chi_2}{d_3}{\underbar s_2}-(b_{2,\inf}-l\frac{\chi_2}{d_3})v
+{ k \frac{\chi_2}{d_3}}u\Big),
\end{cases}
\end{equation*}
hence,
$$
\bar s_1<\bar B_1,\quad \bar s_2<\bar B_2,
$$
and
$(\underbar s_1,\underbar s_2)$  is the unique positive equilibrium of the system,
\begin{equation*}
\begin{cases}
u_t=u\Big(a_{0,\inf}-{ (a_{2,\sup}+l\frac{\chi_1}{d_3})} {\bar s_2}-k\frac{\chi_1}{d_3}{\bar s_1}-(a_{1,\sup}-k\frac{\chi_1}{d_3}) u+l\frac{\chi_1}{d_3}v\Big) \\
v_t= v\Big(b_{0,\inf}-{ (b_{1,\sup}+k\frac{\chi_2}{d_3})} {\bar s_1}-l\frac{\chi_2}{d_3}{\bar s_2}-(b_{2,\sup}-l\frac{\chi_2}{d_3})v+{ k \frac{\chi_2}{d_3}}u\Big).
\end{cases}
\end{equation*}

\item[(3)]   When $\chi_1=\chi_2=0$,
    $$
     \underbar r_1=\underbar s_1=\frac{a_{0,\inf}b_{2,\inf}-a_{2,\sup}b_{0,\sup}}{a_{1,\sup}b_{2,\inf}-a_{2,\sup}b_{1,\inf}},\quad
    \bar r_1=\bar s_1=\frac{a_{0,\sup}b_{2,\sup}-a_{2,\inf}b_{0,\inf}}{a_{1,\inf}b_{2,\sup}-a_{2,\inf}b_{1,\sup}},
    $$
    $$
    \underbar r_2=\underbar s_2=\frac{a_{1,\inf}b_{0,\inf}-a_{0,\sup}b_{1,\sup}}{a_{1,\inf}b_{2,\sup}-a_{2,\inf}b_{1,\sup}},\quad
    \bar r_2=\bar s_2=\frac{a_{1,\sup}b_{0,\sup}-a_{0,\inf}b_{1,\inf}}{a_{1,\sup}b_{2,\inf}-a_{2,\sup}b_{1,\inf}}.
    $$
    Thus Theorem \ref{attracting-rectangle-thm} recovers the result on ultimate bounds of solutions of \eqref{u-v-eq00} in \cite{Ahm}. Note that this result can be proven directly by using the competitive comparison principle. {Note also that, in this case, $(\bar r_1,\underbar r_2)$ is the unique coexistence state of
    $$
    \begin{cases}
    u_t=u(a_{0,\sup}-a_{1,\inf}u-a_{2,\inf}v)\cr
    v_t=v(b_{0,\inf}-b_{1,\sup}u-b_{2,\sup}v)
    \end{cases}
    $$
    and $(\underbar r_1,\bar r_2)$ is the unique coexistence state of
    $$
    \begin{cases}
    u_t=u(a_{0,\inf}-a_{1,\sup}u-a_{2,\sup}v)\cr
    v_t=v(b_{0,\sup}-b_{1,\inf}u-b_{2,\inf}v).
    \end{cases}
    $$
    }

    \item[(4)] When the coefficients are constants, i.e $a_i(t,x)=a_i$ and $b_i(t,x)=b_i$ $(i=0,1,2)$, we  have
    $$\underbar r_1=\bar r_1=\underbar s_1=\bar s_1=\frac{a_0b_2-a_2b_0}{b_2a_1-b_1a_2},$$
    and
      $$\underbar r_2=\bar r_2=\underbar s_2=\bar s_2=\frac{b_0a_1-b_1a_0}{b_2a_1-b_1a_2}.
      $$
    Thus Theorem \ref{attracting-rectangle-thm} implies the uniqueness and stability of coexistence states.
     { In this case, by  Remark \ref{attracting-region-rk}(2),  \eqref{attracting-rectabgle-eq1} has a unique solution iff
    \begin{equation}\label{optimal-rect-cond-00c}
    \begin{cases}
      \frac{a_2}{b_2}<\frac{a_0}{b_0}<\frac{a_1}{b_1},\cr
      \big(a_1-2k\frac{\chi_1}{d_3}\big)\big(b_2-2l\frac{\chi_2}{d_3}\big)>a_2b_1.
      \end{cases}
    \end{equation}
    Note that \eqref{optimal-rect-cond-00c} are the sufficient conditions for
    the asymptotic stability and uniqueness of the constant positive steady states in \cite[Theorem 1.3]{ITBRS17} and \cite{TBJLMM16}.}
\end{itemize}
\end{remark}

The second main theorem of the current paper is on the uniqueness and stability of coexistence states of \eqref{u-v-w-eq00}.

\begin{theorem} [Stability and uniqueness of coexistence states]
\label{thm-nonlinear-stability-001} $\,$
\begin{itemize}
\item[(1)]
Assume (H5).
 Furthermore, assume that
\begin{equation}
\label{stability-cond-2-eq1}
\limsup_{t-s\to\infty}\frac{1}{t-s}\int_{s}^{t}\max\{Q_1(\tau)-q_1(\tau),Q_2(\tau)-q_2(\tau)\}d\tau <0,
\end{equation}
where
 \begin{equation}
 \label{q-eq1}
 q_1(t)=2a_{1,\inf}(t)\underbar r_1+a_{2,\inf}(t)\underbar r_2+\frac{\chi_1\left(k\underbar r_1+l\underbar r_2\right)}{2d_3},
 \end{equation}
 \begin{align}
 \label{q-eq2}
 Q_1(t)=a_{0,\sup}(t)+\frac{\chi_1}{2d_3}\big(k{\bar r_1}+l{\bar r_2}\big)+\frac{k^2}{4\lambda d_3}\Big(\frac{\chi_1^2{\bar r_1}^2}{d_1 }+\frac{\chi_2^2{\bar r_2}^2}{d_2}\Big)+\frac{a_{2,\sup}(t){\bar r_1}+b_{1,\sup}(t){\bar r_2}}{2},
\end{align}
 \begin{equation}
 \label{q-eq3}
 q_2(t)=2b_{2,\inf}(t)\underbar r_2+b_{1,\inf}(t)\underbar r_1+\frac{\chi_2\left(k\underbar r_1+l\underbar r_2\right)}{2d_3},
 \end{equation}
 and
 \begin{align}
 \label{q-eq4}
 Q_2(t)=b_{0,\sup}(t)+\frac{\chi_2}{2d_3}\big(k{\bar r_1}+l{\bar r_2}\big)+\frac{l^2}{4\lambda d_3}\Big(\frac{\chi_1^2{\bar r_1}^2}{d_1 }+\frac{\chi_2^2{\bar r_2}^2}{d_2}\Big)+\frac{a_{2,\sup}(t){\bar r_1}+b_{1,\sup}(t){\bar r_2}}{2}.
\end{align}
Then \eqref{u-v-w-eq00} has a unique coexistence state $(u^{**}(x,t),v^{**}(x,t), w^{**}(x,t))$, and,  for any
$t_0\in\RR$ and  $u_0, v_0 \in { C^+(\bar{\Omega})}$ with
$u_0,v_0\not\equiv 0$,  the  global classical solution
$(u(x,t;t_0,u_0,v_0)$,  $v(x,t;t_0,u_0,v_0)$, $w(x,t;t_0,u_0,v_0))$ of $\eqref{u-v-w-eq00}$ satisfies
  \begin{equation}
  \label{new-global-stability-2-eq1}
  \lim_{t \to \infty}\Big(\|u(\cdot,t;t_0,u_0,v_0)-u^{**}(\cdot,t)\|_{C^0(\bar\Omega)}+\|v(\cdot,t;t_0,u_0,v_0)-v^{**}(\cdot,t)\|_{C^0(\bar\Omega)}\Big)=0,
  \end{equation}
and
\begin{equation}
  \label{new-global-stability-2-eq2}
  \lim_{t \to \infty}\|w(\cdot,t;t_0,u_0,v_0)-w^{**}(\cdot,t)\|_{C^0(\bar\Omega)}=0.
  \end{equation}

  \item[(2)] Assume (H6).
Furthermore, assume that \eqref{stability-cond-2-eq1}-\eqref{q-eq4} hold with $\bar r_1$, $\bar r_2$, $\underbar r_1$, and $\underbar r_2$ being replaced by
$\bar s_1$, $\bar s_2$, $\underbar s_1$, and $\underbar s_2$, respectively, where
$\underbar s_i$ and $\bar s_i$ $(i=1,2)$ are as in Theorem \ref{attracting-rectangle-thm}(2).
 Then the conclusion in (1) also holds.

 \item[(3)] Assume (H7). Then
\eqref{u-v-w-eq00} has a  unique spatially homogeneous coexistence state  $(u^{**}(t),v^{**}(t)$, $w^{**}(t))$, and  for any
$t_0\in\RR$ and  $u_0,v_0 \in {C^+(\bar{\Omega})}$ with
$u_0,v_0\not\equiv 0$,  the  unique global classical solution
$(u(x,t;t_0,u_0,v_0), v(x,t;t_0,u_0,v_0),w(x,t;t_0,u_0,v_0))$ of $\eqref{u-v-w-eq00}$ satisfies
\begin{equation}
\label{global-stability-1-eq1}
\lim_{t \to \infty}\big(  \left\| u(\cdot,t;t_0,u_0,v_0)-u^{**}(t) \right\|_{C^0(\bar\Omega)} +\left\| v(\cdot,t;t_0,u_0,v_0)-v^{**}(t) \right\|_{C^0(\bar\Omega)}\big)=0,
\end{equation}
\begin{equation}
\label{global-stability-1-eq2}
\lim_{t \to \infty}\left\|{ w}(\cdot,t;t_0,u_0,v_0)-ku^{**}(t)-lv^{**}(t) \right\|_{C^0(\bar\Omega)}=0.
\end{equation}
\end{itemize}
\end{theorem}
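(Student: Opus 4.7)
The three parts share the same pattern: set up a Lyapunov functional in the difference of two solutions, control the chemotactic cross terms via the elliptic equation for $w$, match the resulting quadratic form with the quantities $Q_i-q_i$, and then transfer $L^2$--decay to $C^0$--decay. I will describe (1) in detail; (2) is identical with $(\underbar r_i,\bar r_i)$ replaced by $(\underbar s_i,\bar s_i)$, and (3) uses a different Lyapunov adapted to the spatially homogeneous setting.

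\textbf{Parts (1) and (2).} By Theorem \ref{thm-entire-002}, a coexistence state $(u^{**},v^{**},w^{**})$ exists; by Theorem \ref{attracting-rectangle-thm}(1), any solution starting from $u_0,v_0\in C^+(\bar\Omega)$, $u_0,v_0\not\equiv 0$ enters $[\underbar r_1,\bar r_1]\times[\underbar r_2,\bar r_2]$ in finite time, and the coexistence state itself lies in this rectangle for all $t\in\RR$. Set $U=u-u^{**}$, $V=v-v^{**}$, $W=w-w^{**}$ and consider
\begin{equation*}
L(t)=\tfrac12\int_\Omega(U^2+V^2)\,dx.
\end{equation*}
Differentiating and using the $u,v$--equations, the diffusion produces $-d_1\|\nabla U\|_{L^2}^2-d_2\|\nabla V\|_{L^2}^2$. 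The chemotactic term splits as $-\chi_1\nabla\cdot(U\nabla w)-\chi_1\nabla\cdot(u^{**}\nabla W)$: integration by parts against $U$ on the first piece yields $\tfrac{\chi_1}{2d_3}\int U^2(ku+lv-\lambda w)$ after substituting $\Delta w$ from the third equation of \eqref{u-v-w-eq00}, while the second piece is estimated by Young as $\tfrac{d_1}{2}\|\nabla U\|^2+\tfrac{\chi_1^2\bar r_1^2}{2d_1}\|\nabla W\|^2$. Testing the elliptic equation $d_3\Delta W+kU+lV-\lambda W=0$ against $W$ and applying Young gives $\|\nabla W\|^2\le \tfrac{k^2}{2\lambda d_3}\|U\|^2+\tfrac{l^2}{2\lambda d_3}\|V\|^2$. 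The Lotka--Volterra reaction terms expand as $U[a_0-a_1(u+u^{**})-a_2v^{**}]-a_2u^{**}V$ (and symmetric for $V$); the cross term $2a_2u^{**}UV$ is absorbed by $|UV|\le\tfrac12(U^2+V^2)$, producing exactly the combination $\tfrac{a_{2,\sup}\bar r_1+b_{1,\sup}\bar r_2}{2}$ appearing in $Q_1,Q_2$. Collecting everything, using the rectangle bounds to bound $u,u^{**}\le\bar r_1$ from above and $u,u^{**}\ge\underbar r_1$ from below (and analogously for $v$), I obtain
\begin{equation*}
\frac{d}{dt}L(t)\le 2\int_\Omega\bigl[(Q_1(t)-q_1(t))U^2+(Q_2(t)-q_2(t))V^2\bigr]\,dx\le 2M(t)L(t),
\end{equation*}
with $M(t)=\max\{Q_1(t)-q_1(t),Q_2(t)-q_2(t)\}$. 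Hypothesis \eqref{stability-cond-2-eq1} says the average of $M$ is eventually negative and bounded away from $0$, so Gronwall delivers $L(t)\to 0$ exponentially in the sense that $L(t)\le L(t_0)\exp\bigl(2\int_{t_0}^tM\bigr)\to 0$.

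\textbf{From $L^2$ to $C^0$, uniqueness, and convergence of $w$.} Since $u,v$ are eventually uniformly bounded in $L^\infty$, standard parabolic Schauder / bootstrap estimates applied to the $u$-- and $v$--equations give uniform $C^{1+\alpha,\alpha/2}$ bounds of $U$ and $V$ on sufficiently long time intervals, so $L^2$--decay upgrades to $C^0(\bar\Omega)$--decay, yielding \eqref{new-global-stability-2-eq1}. Elliptic regularity applied to $d_3\Delta W+kU+lV-\lambda W=0$ with $U,V\to 0$ in $C^0$ then gives \eqref{new-global-stability-2-eq2}. Uniqueness: if $(\tilde u^{**},\tilde v^{**},\tilde w^{**})$ is another coexistence state, both it and $(u^{**},v^{**},w^{**})$ lie in the invariant rectangle for all $t\in\RR$, so the same Lyapunov estimate applied on $(-\infty,T]$ forces $L\equiv 0$. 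Part (2) runs identically with $(\bar s_i,\underbar s_i)$ from Theorem \ref{attracting-rectangle-thm}(2) replacing $(\bar r_i,\underbar r_i)$.

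\textbf{Part (3).} Under (H7) the coefficients are $x$--independent, and the spatially homogeneous ODE system
\begin{equation*}
\dot u=u(a_0(t)-a_1(t)u-a_2(t)v),\qquad \dot v=v(b_0(t)-b_1(t)u-b_2(t)v)
\end{equation*}
has, by \eqref{stability-cond-1-eq1} and standard Ahmad--Lazer--type arguments, a unique entire positive solution $(u^{**}(t),v^{**}(t))$; the corresponding $w^{**}(t)$ is determined by the spatially constant solution of $0=ku^{**}+lv^{**}-\lambda w^{**}$ from the third equation. The plan is to use the Volterra--type log functional
\begin{equation*}
\Phi(t)=\int_\Omega\Bigl[u-u^{**}-u^{**}\ln(u/u^{**})\Bigr]\,dx+\int_\Omega\Bigl[v-v^{**}-v^{**}\ln(v/v^{**})\Bigr]\,dx,
\end{equation*}
which is nonnegative and vanishes iff $u\equiv u^{**}$, $v\equiv v^{**}$. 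Differentiating, the fact that $u^{**},v^{**}$ are $x$--independent makes $\int_\Omega\Delta u=0$ trivial and reduces the diffusion contribution to $-d_1u^{**}\int|\nabla u|^2/u^2-d_2v^{**}\int|\nabla v|^2/v^2$. The chemotactic term contributes $\chi_1u^{**}\int\nabla u\cdot\nabla w/u$ (plus the analogue in $v$), which Young splits against half the diffusion, leaving $\tfrac{\chi_1^2 u^{**}}{2d_1}\int|\nabla w|^2$ and its counterpart. Testing the elliptic equation for $w$ against $w$ bounds $\int|\nabla w|^2$ in terms of $\int u^2+\int v^2$. The reaction terms produce the quadratic form $-a_1U^2-(a_2+b_1)UV-b_2V^2$ plus time--dependent residuals from $(u^{**})'/u^{**}$; condition \eqref{stability-cond-1-eq2} is precisely the diagonal--dominance inequality that makes this quadratic form (plus the chemotactic remainders already controlled) strictly negative definite, with constant uniform in $t$. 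Hence $\Phi'(t)\le-c\Phi(t)$ for some $c>0$, giving $\Phi(t)\to 0$ and in particular $u\to u^{**},v\to v^{**}$ in $L^1(\Omega)$; parabolic regularity promotes this to $C^0$, yielding \eqref{global-stability-1-eq1}; elliptic regularity then gives \eqref{global-stability-1-eq2}.

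\textbf{Expected main obstacle.} The delicate point in (1)--(2) is bookkeeping: every term that appears after all integrations by parts and Young applications must match the specific combination packaged in $Q_i-q_i$, and the precise rectangle bounds $\underbar r_i,\bar r_i$ (rather than the cruder $\underbar A_i,\bar A_i$) must be used to keep the quadratic coefficient sharp enough for \eqref{stability-cond-2-eq1} to be a usable hypothesis. In (3), the obstacle is calibrating the Young splitting so that the chemotactic residue $\tfrac{\chi_1+\chi_2}{d_3}$--factor comes out exactly with the factor $2$ demanded by \eqref{stability-cond-1-eq2}; without this calibration the proof would require a strictly stronger smallness assumption on $\chi_1,\chi_2$.
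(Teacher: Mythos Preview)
Your treatment of parts (1) and (2) matches the paper's argument closely: the paper also sets $\psi=u-u^{**}$, $\phi=v-v^{**}$, tests against $\psi$ and $\phi$ (in fact it splits into $\psi_\pm,\phi_\pm$, but then drops the sign-favourable cross terms, so the outcome is the same as testing directly), uses the elliptic equation for $w-w^{**}$ to bound $\|\nabla W\|_{L^2}^2$, and arrives at $\frac{d}{dt}\int(\psi^2+\phi^2)\le 2M(t)\int(\psi^2+\phi^2)$ with $M=\max\{Q_1-q_1,Q_2-q_2\}$. One calibration: in the Young step for $\chi_1\int u^{**}\nabla U\cdot\nabla W$ the paper uses the full $d_1\|\nabla U\|^2$ (not $d_1/2$), which is what produces the constant $\frac{k^2}{4\lambda d_3}$ rather than $\frac{k^2}{2\lambda d_3}$ in $Q_1$; with your splitting you would not recover the stated $Q_i$.

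Part (3), however, is handled quite differently in the paper, and your proposed route has a real gap. The paper does \emph{not} use the Volterra log-functional $\int(u-u^{**}-u^{**}\ln(u/u^{**}))$ on the PDE. Instead it invokes the four-component ODE envelope $(\bar u,\underline u,\bar v,\underline v)$ from Lemmas~\ref{lem-1-ode00}--\ref{lem-1-ode000}, which sandwiches the PDE solution pointwise, and then computes directly
\[
\frac{d}{dt}\Bigl(\ln\frac{\bar u}{\underline u}+\ln\frac{\bar v}{\underline v}\Bigr)
= -\Bigl(a_1-b_1-\tfrac{2k(\chi_1+\chi_2)}{d_3}\Bigr)(\bar u-\underline u)
-\Bigl(b_2-a_2-\tfrac{2l(\chi_1+\chi_2)}{d_3}\Bigr)(\bar v-\underline v),
\]
so that \eqref{stability-cond-1-eq2} is \emph{exactly} the positivity of the two coefficients above. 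Your approach instead produces the quadratic form $-a_1U^2-(a_2+b_1)UV-b_2V^2$ and claims \eqref{stability-cond-1-eq2} makes it negative definite. That is false: take $\chi_1=\chi_2=0$, $a_1=100$, $b_1=99$, $a_2=1$, $b_2=2$ (compatible with (H7) for suitable $a_0,b_0$); then $4a_1b_2=800<(a_2+b_1)^2=10000$, and the form is indefinite (e.g.\ $U=1$, $V=-10$ gives value $+700$). In addition, because $u^{**}(t)$ is genuinely time-dependent here, your functional picks up the extra term $-(u^{**})'\int\ln(u/u^{**})$, which you flag as a ``residual'' but do not control; in the constant-coefficient Tello--Winkler setting this term vanishes, but under (H7) it does not. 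The paper's envelope argument sidesteps both issues simultaneously.
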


\begin{remark}
\label{rmk-stability}
\begin{itemize}
\item[(1)]{ Assume (H7). \eqref{stability-cond-1-eq1} implies that
\begin{equation*}
\begin{cases}
u_t=u( a_0(t)-a_1(t)u -a_2(t)v)\\
v_t=v( b_0(t)-b_1(t)u -b_2(t)v)\\
\end{cases}
\end{equation*}
has a positive entire solution $(u^{**}(t),v^{**}(t))$ which is  globally stable (see Lemma \ref{persistence-lm6}). Thus $(u^{**}(t),v^{**}(t),w^{**}(t))$ with $w^{**}(t)=\frac{ku^{**}(t)+lv^{**}(t)}{\lambda},$ is a positive entire solution of \eqref{u-v-w-eq00} in the case of space homogeneous coefficients,  i.e,  $a_i(t,x)=a_i(t)$ and $b_i(t,x)=b_i(t)$. The uniqueness results is new even for the case $\chi_1=\chi_2=0$ with general time dependence. { When the coefficients are periodic, Alvarez and Lazer proved in \cite{AlLA86} the uniqueness of the entire solution $(u^{**}(t),v^{**}(t))$ only under the assumption \eqref{stability-cond-1-eq1}. It { remains open}   whether such uniqueness result holds even in the case of $\chi_1=\chi_2=0$ with general time dependence under only the assumption \eqref{stability-cond-1-eq1}. { The  arguments in the proof of Theorem \ref{thm-nonlinear-stability-001}(3) are similar to those in \cite{TW12}. } }}

\item[(2)] { The arguments in \cite{TW12} as well as the arguments in  \cite{TBJLMM16} and \cite{ITBRS17} are difficult to be applied in the general nonhomogeneous case. We utilized a new approach to prove
 the stability and uniqueness of coexistence states in this later case, namely,   we first obtain optimal attracting rectangle by iterating the
so called eventual comparison method (see the proof of Theorem \ref{attracting-rectangle-thm}), next we  prove  the stability of coexistence states
  in $L^2$-norm by applying Green's Theorem and Young's inequality and using the optimal attracting rectangle established in Theorem \ref{attracting-rectangle-thm}, and then we prove the stability and uniqueness of coexistence states in $L^\infty$-norm.
  By this new approach, we also obtain some new result about the uniqueness and stability of coexistence states of \eqref{u-v-eq00}
  (see Corollary \ref{stability-cor}).}

\item[(3)] \eqref{stability-cond-1-eq2} implies (H2). It is the analogue of the condition $a_{1,\inf}>\frac{2\chi_1 k}{d_3}$ for the global stability of
 the unique spatially homogeneous entire positive solution of the following one species chemotaxis model,
 \begin{equation*}
 \begin{cases}
 u_t=d_1\Delta u-\chi_1\nabla\cdot (u \nabla w)+u\Big(a_0(t)-a_1(t)u\Big),\quad x\in \Omega\cr
0=d_3\Delta w+k u-\lambda w,\quad x\in \Omega \cr
\frac{\p u}{\p n}=\frac{\p w}{\p n}=0,\quad x\in\p\Omega,
 \end{cases}
 \end{equation*}
 (see \cite[Theorem 1.4]{ITBWS16}).

\item[(4)] When $\chi_1=\chi_2=0$,
 \eqref{stability-cond-2-eq1} becomes
 \begin{equation}
\label{new-condition-eq1}
\begin{cases}
\overline{\lim}_{t-s\to\infty}\frac{1}{t-s}\int_{s}^{t}\Big\{a_{0,\sup}(\tau)+\frac{a_{2,\sup}(\tau)}{2}\bar r_1-2a_{1,\inf}(\tau)\underbar r_1+\frac{b_{1,\sup}(\tau)}{2}\bar r_2-a_{2,\inf}(\tau)\underbar r_2\Big\}d\tau <0\cr
  \overline{\lim}_{t-s\to\infty}\frac{1}{t-s}\int_{s}^{t}\Big\{b_{0,\sup}(\tau)+\frac{b_{1,\sup}(\tau)}{2}\bar r_2-2b_{2,\inf}(\tau)\underbar r_2+\frac{a_{2,\sup}(\tau)}{2}\bar r_1-b_{1,\inf}(\tau)\underbar r_1\Big\}d\tau <0.
\end{cases}
\end{equation}
If furthermore the coefficients are time homogeneous i.e $a_i(t,x)=a_i(x)$ and $b_i(t,x)=b_i(x),$  then \eqref{stability-cond-2-eq1} becomes
\begin{equation}
\label{new-condition-eq2}
\begin{cases}
a_{0,\sup}+\frac{a_{2,\sup}}{2}\bar r_1+\frac{b_{1,\sup}}{2}\bar r_2 <2a_{1,\inf}\underbar r_1+a_{2,\inf}\underbar r_2 \cr
b_{0,\sup}+\frac{b_{1,\sup}}{2}\bar r_2+\frac{a_{2,\sup}}{2}\bar r_1 <2b_{2,\inf}\underbar r_2+b_{1,\inf}\underbar r_1.
\end{cases}
\end{equation}
\end{itemize}
\end{remark}

We have the following corollary for the uniqueness and stability of coexistence states of \eqref{u-v-eq00}, which is new in the
general space dependence case.

\begin{corollary}
\label{stability-cor} Consider \eqref{u-v-eq00}.
Assume that $\frac{a_{0,\sup}}{a_{0,\inf}}<2\frac{a_{1,\inf}}{a_{1,\sup}}$ and
$\frac{b_{0,\sup}}{b_{0,\inf}}<2\frac{b_{2,\inf}}{b_{2,\sup}}$.
Then \eqref{u-v-eq00} has a unique stable
coexistence state provided that the competition coefficients $a_2$ and $b_1$ are such small so that  \eqref{stability-cond-1-eq1} and the following hold,
\begin{equation*}
\begin{cases}
  a_{2,\sup}\big(\frac{\bar r_1}{2}+\frac{2a_{1,\inf}b_{0,\sup}-a_{0,\sup}b_{1,\inf}}{a_{1,\sup}b_{2,\inf}-a_{2,\sup}b_{1,\inf}}
\big)+\frac{b_{1,\sup}}{2}\bar r_2-a_{2,\inf}\underbar r_2 <b_{2,\inf}\frac{2a_{1,\inf}a_{0,\inf}-a_{0,\sup}a_{1,\sup}}{a_{1,\sup}b_{2,\inf}-a_{2,\sup}b_{1,\inf}} \cr\cr
  b_{1,\sup}\big(\frac{\bar r_2}{2}+\frac{2b_{2,\inf}a_{0,\sup}-b_{0,\sup}a_{2,\inf}}{b_{2,\sup}a_{1,\inf}-b_{1,\sup}a_{2,\inf}}
\big)+\frac{a_{2,\sup}}{2}\bar r_1-b_{1,\inf}\underbar r_1 <a_{1,\inf}\frac{2b_{2,\inf}b_{0,\inf}-b_{0,\sup}b_{2,\sup}}{b_{2,\sup}a_{1,\inf}-b_{1,\sup}a_{2,\inf}}.
\end{cases}
\end{equation*}
\end{corollary}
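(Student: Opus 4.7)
The plan is to deduce Corollary \ref{stability-cor} as a direct specialization of Theorem \ref{thm-nonlinear-stability-001}(1) to the case $\chi_1=\chi_2=0$. When the chemotaxis sensitivities vanish, hypothesis (H1) is automatic, so (H5) reduces to the persistence condition \eqref{stability-cond-1-eq1}, which is among the assumed hypotheses. The solvability of the algebraic system \eqref{attracting-rectabgle-eq1} is also automatic in this regime, and the four quantities $\underbar r_1,\bar r_1,\underbar r_2,\bar r_2$ take the closed forms recorded in Remark \ref{attracting-rectangle-rk}(3). Consequently, the uniqueness and stability conclusion will follow once we verify the integrated stability condition \eqref{stability-cond-2-eq1}.

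Next I would reduce \eqref{stability-cond-2-eq1} to a checkable pointwise form. With $\chi_1=\chi_2=0$, the differences $Q_i(\tau)-q_i(\tau)$ collapse as indicated in Remark \ref{rmk-stability}(4), so \eqref{stability-cond-2-eq1} becomes exactly \eqref{new-condition-eq1}. Using the trivial pointwise upper/lower bounds $a_{i,\sup}(\tau)\le a_{i,\sup}$, $a_{i,\inf}(\tau)\ge a_{i,\inf}$ (and analogously for $b_i$) inside the time averages, a sufficient condition for \eqref{new-condition-eq1} is the pair of time-independent inequalities in \eqref{new-condition-eq2}. So the task reduces to showing that the two inequalities in the corollary are equivalent to \eqref{new-condition-eq2}.

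The remaining step is purely algebraic. In the first inequality of \eqref{new-condition-eq2} I substitute $\underbar r_1=\frac{a_{0,\inf}b_{2,\inf}-a_{2,\sup}b_{0,\sup}}{a_{1,\sup}b_{2,\inf}-a_{2,\sup}b_{1,\inf}}$ from Remark \ref{attracting-rectangle-rk}(3) and move $a_{0,\sup}$ to the right-hand side. Combining $2a_{1,\inf}\underbar r_1-a_{0,\sup}$ over the common denominator $a_{1,\sup}b_{2,\inf}-a_{2,\sup}b_{1,\inf}$ (positive by \eqref{stability-cond-1-eq1}) produces
$$2a_{1,\inf}\underbar r_1-a_{0,\sup}=\frac{b_{2,\inf}(2a_{1,\inf}a_{0,\inf}-a_{0,\sup}a_{1,\sup})-a_{2,\sup}(2a_{1,\inf}b_{0,\sup}-a_{0,\sup}b_{1,\inf})}{a_{1,\sup}b_{2,\inf}-a_{2,\sup}b_{1,\inf}};$$
transferring the $a_{2,\sup}$ piece back to the left-hand side and grouping it with the $\tfrac{a_{2,\sup}}{2}\bar r_1$ term yields precisely the first inequality of the corollary. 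The second inequality follows by the symmetric computation in the $(b_0,b_1,b_2)$ variables using the formula for $\underbar r_2$. The two ratio hypotheses $\frac{a_{0,\sup}}{a_{0,\inf}}<2\frac{a_{1,\inf}}{a_{1,\sup}}$ and $\frac{b_{0,\sup}}{b_{0,\inf}}<2\frac{b_{2,\inf}}{b_{2,\sup}}$ are exactly what guarantee $2a_{1,\inf}a_{0,\inf}-a_{0,\sup}a_{1,\sup}>0$ and $2b_{2,\inf}b_{0,\inf}-b_{0,\sup}b_{2,\sup}>0$, so that the right-hand sides in the corollary are positive; this is the room one needs in order to absorb the left-hand side by choosing the competition coefficients $a_2$ and $b_1$ sufficiently small.

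With these verifications, Theorem \ref{thm-nonlinear-stability-001}(1) applies and delivers both the uniqueness of the coexistence state of \eqref{u-v-eq00} and its global asymptotic stability in $C^0(\bar\Omega)$ from arbitrary nonnegative, nontrivial initial data. There is no substantial analytical obstacle here once Theorem \ref{thm-nonlinear-stability-001}(1) is in hand; the only care required is bookkeeping to match the explicit rational expressions in \eqref{new-condition-eq2} against the form written in the statement, and confirming that the two smallness-type ratio assumptions are precisely what make the algebra of the right-hand sides positive.
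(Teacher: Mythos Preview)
Your proposal is correct and follows essentially the same approach as the paper's own proof: specialize Theorem \ref{thm-nonlinear-stability-001}(1) to $\chi_1=\chi_2=0$, reduce \eqref{stability-cond-2-eq1} to \eqref{new-condition-eq2}, substitute the explicit formula for $\underbar r_1$ (respectively $\underbar r_2$) from Remark \ref{attracting-rectangle-rk}(3), and rearrange algebraically to match the inequalities in the corollary, observing that the two ratio hypotheses guarantee positivity of the right-hand sides. The paper carries out the same computation in slightly more detail but with identical logic.
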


The rest of the paper is organized as follows. In section 2, we recall some important results in \cite{ITBWS17a} to be used to prove our main theorems. We discuss the optimal attracting rectangles  in section 3. It is here that we prove Theorem \ref{attracting-rectangle-thm}. In section 4, we prove  our main  Theorem  \ref{thm-nonlinear-stability-001} about stability and uniqueness of coexistence in the general inhomogeneous case. We also prove Corollary \ref{stability-cor} in this section.
Finally, we discuss the conditions for   \eqref{attracting-rectabgle-eq1} to have  a unique solution in section 5. Note that such conditions are generic (see Proposition \ref{r-eq-solu}).

\medskip

{\section{Preliminary}
 Consider the following system of ODEs induced from system \eqref{u-v-w-eq00},
\begin{equation}
\label{ode00}
\begin{cases}
\overline{u}'=\frac{\chi_1}{d_3} \overline{u}\big(k \overline {u}+l\overline v-k\underline{u}-l\underline{v}\big)+ \overline{u}\big[a_{0,\sup}(t)-a_{1,\inf}(t)\overline u
-a_{2,\inf}(t)\underline{v}\big]\\
\underline{u}'=\frac{\chi_1}{d_3} \underline{u}\big(k \underline {u}+l\underline v-k\overline{u}-l\overline{v}\big)+ \underline{u}\big[a_{0,\inf}(t)-a_{1,\sup}(t)\underline u
-a_{2,\sup}(t)\overline{v}\big]\\
\overline{v}'=\frac{\chi_2}{d_3} \overline{v}\big(k \overline {u}+l\overline v-k\underline{u}-l\underline{v}\big)+ \overline{v}\big[b_{0,\sup}(t) -b_{1,\inf}(t)\underline{u}-b_{2,\inf}(t)\overline v\big]\\
\underline{v}'=\frac{\chi_2}{d_3} \underline{v}\big(k \underline {u}+l\underline v-k\overline{u}-l\overline{v}\big)+ \underline{v}\big[b_{0,\inf}(t)-b_{1,\sup}(t)\overline{u}-b_{2,\sup}(t)\underline v\big].
\end{cases}
\end{equation}
For convenience, we let
\begin{align*}
&\left(\overline{u}(t),\underline{u}(t),\overline{v}(t),\underline{v}(t)\right)\\
&=\left(\overline{u}\left(t;t_0,\overline{u}_0,\underline{u}_0,\overline{v}_0,\underline{v}_0\right),\underline{u}\left(t;t_0,\overline{u}_0,\underline{u}_0,\overline{v}_0,\underline{v}_0\right),\overline{v}\left(t;t_0,\overline{u}_0,\underline{u}_0,\overline{v}_0,\underline{v}_0\right),\underline{v}\left(t;t_0,\overline{u}_0,\underline{u}_0,\overline{v}_0,
\underline{v}_0\right)\right)
\end{align*}
be the solution of \eqref{ode00}
with initial condition
\begin{align}\label{initial-ode00}
&\left(\overline{u}\left(t_0;t_0,\overline{u}_0,\underline{u}_0,\overline{v}_0,\underline{v}_0\right),\underline{u}\left(t_0;t_0,\overline{u}_0,
\underline{u}_0,\overline{v}_0,\underline{v}_0\right),\overline{v}\left(t_0;t_0,\overline{u}_0,\underline{u}_0,\overline{v}_0,\underline{v}_0\right),
\underline{v}\left(t_0;t_0,\overline{u}_0,\underline{u}_0,\overline{v}_0,
\underline{v}_0\right)\right)\nonumber\\
&=\left(\overline{u}_0,\underline{u}_0,\overline{v}_0,\underline{v}_0\right) \in \mathbb{R}^4_+.
\end{align}
Then for given $t_0\in\mathbb{R}$ and  $\left(\overline{u}_0,\underline{u}_0,\overline{v}_0,\underline{v}_0\right) \in \mathbb{R}^4_+,$ there exists $T_{\max}\left(t_0,\overline{u}_0,\underline{u}_0,\overline{v}_0,\underline{v}_0\right)>0$ such that \eqref{ode00} has a unique classical solution
$\left(\overline{u}(t),\underline{u}(t),\overline{v}(t),\underline{v}(t)\right)$ on $(t_0,t_0+T_{\max}\left(t_0,\overline{u}_0,\underline{u}_0,\overline{v}_0,\underline{v}_0\right))$ satisfying \eqref{initial-ode00}. Moreover if
$T_{\max}\left(t_0,\overline{u}_0,\underline{u}_0,\overline{v}_0,\underline{v}_0\right)<\infty,$ then
\begin{equation}\label{blow-creterion-ode00}
\limsup_{t \nearrow T_{\max}\left(t_0,\overline{u}_0,\underline{u}_0,\overline{v}_0,\underline{v}_0\right)}
\left(|\overline{u}(t_0+t)|+|\underline{u}(t_0+t)|+|\overline{v}(t_0+t)|+|\underline{v}(t_0+t)|\right)=\infty.
\end{equation}

Then we have the following important lemma from \cite{ITBWS17a}.

\begin{lemma}\cite[{ Lemma 2.2}]{ITBWS17a}
\label{lem-1-ode00} Let $\left(\overline{u}(t),\underline{u}(t),\overline{v}(t),\underline{v}(t)\right)$ be the solution of \eqref{ode00} which satisfies \eqref{initial-ode00}.
\begin{itemize}
\item[(i)] If $0\leq\underline{u}_0 \leq \overline{u}_0\quad \text{and} \quad 0\leq \underline{v}_0 \leq \overline{v}_0$, then
 $ 0\leq\underline{u}(t )\leq \overline{u}(t) \quad \text{and} \quad 0\leq \underline{v}(t)\leq \overline{v}(t)$ for all $t \in [t_0,t_0+T_{\max}\left(\overline{u}_0,\underline{u}_0,\overline{v}_0,\underline{v}_0\right)).$

\item[(ii)] If { (H2)} holds, then $T_{\max}\left(t_0,\overline{u}_0,\underline{u}_0,\overline{v}_0,\underline{v}_0\right)=\infty$ and
 $$
\limsup_{t\to\infty} \overline{u}(t) \leq \bar B_1,\quad \limsup_{t\to\infty}\overline{v}(t)\leq  \bar B_2,
$$
 where $\bar B_1$ and $\bar B_2$ are as in \eqref{A1-overbar-0} and \eqref{A2-overbar-0}, respectively.
\end{itemize}
\end{lemma}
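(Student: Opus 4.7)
I would exploit the "envelope structure" of \eqref{ode00}: the $\overline u,\overline v$ equations use pointwise worst-case coefficients ($a_{0,\sup},a_{1,\inf},a_{2,\inf},\ldots$) and add the chemotaxis gap $k(\overline u-\underline u)+l(\overline v-\underline v)$, whereas the $\underline u,\underline v$ equations use the opposite extremes and subtract the same gap. Nonnegativity is immediate because each of the four right-hand sides carries the corresponding variable as a factor, so the coordinate faces of $\RR^4_+$ are invariant and solutions starting in $\RR^4_+$ stay there on their maximal interval. To verify that $\Sigma=\{(\overline u,\underline u,\overline v,\underline v)\in\RR^4_+:\underline u\le\overline u,\ \underline v\le\overline v\}$ is positively invariant I would apply a Nagumo-type tangency condition on its boundary. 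On the face $\{\overline u=\underline u=x\}\cap\Sigma$, subtracting the $\underline u$-equation from the $\overline u$-equation gives
\begin{equation*}
\overline u'-\underline u'=\tfrac{2l\chi_1}{d_3}x(\overline v-\underline v)+x\big[(a_{0,\sup}-a_{0,\inf})+(a_{1,\sup}-a_{1,\inf})x+a_{2,\sup}(t)\overline v-a_{2,\inf}(t)\underline v\big]\ge 0,
\end{equation*}
and the symmetric calculation on $\{\overline v=\underline v\}$ yields $\overline v'-\underline v'\ge 0$. Hence the vector field of \eqref{ode00} points into $\Sigma$ on $\p\Sigma$, and $\Sigma$ is positively invariant, which is precisely (i).

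\textbf{Part (ii).} Under (H2), I would use the positivity from part (i) to discard the nonpositive terms $-\tfrac{k\chi_1}{d_3}\overline u\,\underline u$, $-\tfrac{l\chi_1}{d_3}\overline u\,\underline v$, $-a_{2,\inf}(t)\overline u\,\underline v$ from the $\overline u$-equation (analogously for $\overline v$), yielding the scalar inequalities
\begin{align*}
\overline u' &\le \overline u\Big[a_{0,\sup}-\big(a_{1,\inf}-\tfrac{k\chi_1}{d_3}\big)\overline u+\tfrac{l\chi_1}{d_3}\overline v\Big],\\
\overline v' &\le \overline v\Big[b_{0,\sup}+\tfrac{k\chi_2}{d_3}\overline u-\big(b_{2,\inf}-\tfrac{l\chi_2}{d_3}\big)\overline v\Big],
\end{align*}
with the constants $a_{0,\sup}$, $a_{1,\inf}$, etc.\ now taken over both $t$ and $x$. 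Writing $A=a_{1,\inf}-\tfrac{k\chi_1}{d_3}$, $B=b_{2,\inf}-\tfrac{l\chi_2}{d_3}$, $C=\tfrac{l\chi_1}{d_3}$, $D=\tfrac{k\chi_2}{d_3}$, assumption (H2) gives $A,B>0$ and $AB>CD$. The autonomous majorant system $U'=U(a_{0,\sup}-AU+CV)$, $V'=V(b_{0,\sup}+DU-BV)$ is cooperative (the off-diagonal Jacobian entries $CU,DV\ge 0$) and its unique positive equilibrium is, by direct algebra, exactly $(\bar B_1,\bar B_2)$ from \eqref{A1-overbar-0}-\eqref{A2-overbar-0}. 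A Kamke-type comparison then gives $\overline u(t)\le U(t)$ and $\overline v(t)\le V(t)$ on the common interval of existence, which, together with $\underline u\le\overline u$, $\underline v\le\overline v$ from part (i) and the blow-up criterion \eqref{blow-creterion-ode00}, forces $T_{\max}=\infty$.

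The main obstacle is the final convergence step, namely showing that the majorant $(U,V)$ converges globally to $(\bar B_1,\bar B_2)$, so that passing $\overline u\le U$, $\overline v\le V$ to $\limsup$ yields the advertised ultimate bounds. Local asymptotic stability is easy: the linearization at $(\bar B_1,\bar B_2)$ has trace $-A\bar B_1-B\bar B_2<0$ and determinant $\bar B_1\bar B_2(AB-CD)>0$. Globality is obtained by an iterative bootstrap, using the feedback structure of the cooperative system: start from the coarse a priori bound $\limsup V\le b_{0,\sup}/B+\varepsilon$ obtained by ignoring the cross-term, insert it into the scalar logistic inequality for $U$ to get $\limsup U\le(a_{0,\sup}+C\limsup V)/A+\varepsilon$, feed the improved $U$-bound back into the $V$-equation, and iterate; the contraction factor of this map is precisely $CD/(AB)<1$ by (H2), so the iteration converges to $(\bar B_1,\bar B_2)$, giving $\limsup_{t\to\infty}\overline u(t)\le \bar B_1$ and $\limsup_{t\to\infty}\overline v(t)\le \bar B_2$ and completing the proof.
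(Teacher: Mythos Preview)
The paper itself does not prove this lemma; it is quoted verbatim from \cite[Lemma~2.2]{ITBWS17a}, so there is no in-paper proof to compare against. Your argument for (i) is correct: nonnegativity follows from the factored form, and your Nagumo/tangency computation on the faces $\{\overline u=\underline u\}$ and $\{\overline v=\underline v\}$ is accurate and suffices for positive invariance of $\Sigma$.

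For (ii), the reduction to the autonomous cooperative majorant $U'=U(a_{0,\sup}-AU+CV)$, $V'=V(b_{0,\sup}+DU-BV)$ and the identification of its unique positive equilibrium with $(\bar B_1,\bar B_2)$ are correct, and the Kamke comparison is legitimate because the off-diagonal Jacobian entries $CU$, $DV$ are nonnegative. The gap is in your final ``iterative bootstrap''. In the $V$-equation the cross term $+DU$ is \emph{nonnegative}; discarding it yields $V'\ge V(b_{0,\sup}-BV)$ and hence only the lower bound $\liminf V\ge b_{0,\sup}/B$, not the upper bound $\limsup V\le b_{0,\sup}/B+\varepsilon$ that you claim as the starting point. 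Without an a priori upper bound on $(U,V)$ the iteration never gets off the ground---this is precisely where mutualistic (cooperative) coupling differs from competitive coupling, for which your scheme would work.

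Two standard repairs close the gap under $AB>CD$. (a) The Goh-type Lyapunov function $L=D\bigl(U-\bar B_1-\bar B_1\ln(U/\bar B_1)\bigr)+C\bigl(V-\bar B_2-\bar B_2\ln(V/\bar B_2)\bigr)$ is proper on the open quadrant and satisfies $\dot L=-DA(U-\bar B_1)^2+2CD(U-\bar B_1)(V-\bar B_2)-CB(V-\bar B_2)^2$, which is negative definite exactly because $AB>CD$; this gives global asymptotic stability of $(\bar B_1,\bar B_2)$ in one stroke. (b) Alternatively, set $M(t)=\max\{U(t)/\bar B_1,\,V(t)/\bar B_2\}$. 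Using $a_{0,\sup}=A\bar B_1-C\bar B_2$ and $b_{0,\sup}=B\bar B_2-D\bar B_1$, one checks that whenever $M>1$ the ratio realising the maximum has logarithmic derivative $\le (1-M)\min\{a_{0,\sup},b_{0,\sup}\}<0$, so $M(t)$ is eventually $\le 1+\varepsilon$; this yields $\limsup U\le\bar B_1$ and $\limsup V\le\bar B_2$ directly, and then your comparison $\overline u\le U$, $\overline v\le V$ finishes the proof.
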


{
\begin{lemma}\cite[Proof Theorem 1.1(1)]{ITBRS17}
\label{lem-1-ode000}
Assume { (H2)}. Given $t_0 \in \mathbb{R},$ $u_0,v_0 \in {C^+(\bar{\Omega})},$ let $\overline{u}_0=\max_{x \in \bar \Omega}u_0(x),$ $\underline{u}_0=\min_{x \in \bar \Omega}u_0(x),$ $\overline{v}_0=\max_{x \in \bar \Omega}v_0(x)$ , $\underline{v}_0=\min_{x \in \bar \Omega}v_0(x)$ and let $\left(\overline{u}(t),\underline{u}(t),\overline{v}(t),\underline{v}(t) \right)$ be solution of \eqref{ode00} satisfying initial condition \eqref{initial-ode00}. Then if $(u(x,t),v(x,t)$, $w(x,t))$ is the solution of equation \eqref{u-v-w-eq00} with initials $u(\cdot,t_0)=u_0$ and $v(\cdot,t_0)=v_0,$ we have
\[
0\leq \underline{u}(t) \leq u(x,t) \leq \overline{u}(t) \quad \text{and} \quad  0 \leq \underline{v}(t) \leq v(x,t) \leq \overline{v}(t) \, , \forall  x \in \bar \Omega \, \,\,  t \geq t_0.
\]
\end{lemma}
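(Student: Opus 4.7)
The plan is the classical envelope/ODE-comparison argument. Set
\[
M_u(t) := \max_{x \in \bar\Omega} u(x,t), \qquad m_u(t) := \min_{x \in \bar\Omega} u(x,t),
\]
and define $M_v(t), m_v(t)$ analogously. These are locally Lipschitz in $t$, and my goal is to show that the four-tuple $(M_u, m_u, M_v, m_v)$ satisfies the differential inequality system obtained by replacing each equality in \eqref{ode00} by $\leq$ on the $\overline{u}', \overline{v}'$-lines and by $\geq$ on the $\underline{u}', \underline{v}'$-lines; a standard ODE vector comparison will then finish the argument.

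The algebraic heart of the proof is to eliminate the chemotaxis divergence using the elliptic equation for $w$. Since $d_3 \Delta w = \lambda w - ku - lv$,
\[
\chi_1 \nabla \cdot (u \nabla w) = \chi_1 \nabla u \cdot \nabla w + \frac{\chi_1 u}{d_3}(\lambda w - ku - lv),
\]
so the $u$-equation can be rewritten as
\[
u_t = d_1 \Delta u - \chi_1 \nabla u \cdot \nabla w + u\Bigl[a_0 - a_1 u - a_2 v + \frac{\chi_1}{d_3}\bigl(ku + lv - \lambda w\bigr)\Bigr],
\]
with an analogous identity for $v$. Moreover, the elliptic maximum principle applied to the $w$-equation on $\bar\Omega$ with Neumann data yields the pointwise bound $(km_u + lm_v)/\lambda \leq w(x,t) \leq (kM_u + lM_v)/\lambda$. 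At a point $x^*(t) \in \bar\Omega$ where $M_u(t)$ is attained one has $\nabla u(x^*,t) = 0$ and $\Delta u(x^*,t) \leq 0$: directly in the interior case, and via the Neumann boundary condition together with a Hopf-type argument (or a smoothing approximation) in the boundary case. Substituting these into the rewritten equation at $x^*(t)$, discarding the nonpositive $d_1 \Delta u$ term, using $v(x^*,t) \geq m_v(t)$ and the lower $w$-bound on the unfavourable $-\lambda w$ term, and finally replacing $a_0, a_1, a_2$ at $x^*$ by $a_{0,\sup}, a_{1,\inf}, a_{2,\inf}$, one arrives (via an envelope/Danskin computation valid at a.e.\ $t$) at
\[
M_u'(t) \leq M_u\bigl[a_{0,\sup}(t) - a_{1,\inf}(t) M_u - a_{2,\inf}(t) m_v\bigr] + \frac{\chi_1 M_u}{d_3}\bigl(kM_u + lM_v - km_u - lm_v\bigr),
\]
which is precisely the $\overline{u}'$-line of \eqref{ode00}. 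Three symmetric calculations at a minimizer of $u$ and at the max/min of $v$ produce the other three inequalities.

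The final step is an ODE vector comparison. The right-hand side of \eqref{ode00} is quasimonotone in the cone $\{\overline{u} \geq \underline{u},\ \overline{v} \geq \underline{v}\}$ --- for example, the $\overline{u}'$-expression is nondecreasing in $\overline{v}$ and nonincreasing in $\underline{u}$ and $\underline{v}$ --- so a standard vector differential-inequality lemma (e.g.\ Walter's theorem) applies, and together with the matching initial data \eqref{initial-ode00} it yields
\[
\underline{u}(t) \leq m_u(t) \leq M_u(t) \leq \overline{u}(t), \qquad \underline{v}(t) \leq m_v(t) \leq M_v(t) \leq \overline{v}(t),
\]
for every $t$ in the maximal existence interval of the ODE. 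Lemma \ref{lem-1-ode00}(ii) guarantees under (H2) that this interval is $[t_0,\infty)$, which completes the argument. The main technical obstacle is the boundary-extremum case in the derivative estimate, where the naive relations $\nabla u = 0,\ \Delta u \leq 0$ are not a priori available; this is routinely handled either by the parabolic Hopf lemma (a strict maximum cannot occur at a smooth Neumann boundary point) or by approximating $\Omega$ from inside by smooth invading subdomains and passing to the limit.
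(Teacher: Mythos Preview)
Your argument is correct and is precisely the envelope/ODE-comparison method that the paper is invoking when it cites \cite[Theorem 1.1(1)]{ITBRS17}: rewrite the chemotaxis divergence via the elliptic $w$-equation, use the elliptic maximum principle to bound $\lambda w$ between $km_u+lm_v$ and $kM_u+lM_v$, derive the four differential inequalities for $(M_u,m_u,M_v,m_v)$ at extremum points, and close with a mixed-quasimonotone (M\"uller/Kamke-type) ODE comparison together with Lemma~\ref{lem-1-ode00}(ii) for global existence. The only point worth sharpening is terminology: the relevant comparison principle is the mixed-quasimonotone (type-K) version rather than the ordinary cooperative one, since the off-diagonal dependence of $F_1$ on $\underline u,\underline v$ is nonincreasing while its dependence on $\overline v$ is nondecreasing, and dually for the other components---but you have checked exactly these signs, so the substance is in order.
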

\begin{proof}
By the similar arguments as those in \cite[Theorem 1.1(1)]{ITBRS17}, under the condition { (H2)} we have
\[
0\leq \underline{u}(t) \leq u(x,t) \leq \overline{u}(t) \quad \text{and} \quad  0 \leq \underline{v}(t) \leq v(x,t) \leq \overline{v}(t) \, , \forall  x \in \bar \Omega \, \,\,  t \in (t_0, t_0+T_{\max}).
\]
By { (H2)} and  Lemma \ref{lem-1-ode00}, we get $T_{\max}=\infty.$
\end{proof}
}
Next, we have the following lemma about existence and stability of coexistence states when the coefficients are space independent.
\begin{lemma}\cite[Lemma 4.1 ]{ITBWS17a}
\label{persistence-lm6}
Consider
\begin{equation}
\begin{cases}
\label{u-v-ode}
u_t=u\big(a_0(t)-a_1(t)u-a_2(t)v\big)\cr
v_t=v\big(b_0(t)-b_1(t)u-b_2(t)v\big).
\end{cases}
 \end{equation}
Assume \eqref{stability-cond-1-eq1} is satisfied.
 Then there is a  { strictly} positive entire solution $(u^{**}(t),v^{**}(t))$ of \eqref{u-v-ode}. Moreover, for any $u_0,v_0>0$ and $t_0\in\RR$,
 $$
 (u(t;t_0,u_0,v_0),v(t;t_0,u_0,v_0))-(u^{**}(t),v^{**}(t))\to 0
 $$
 as $t\to\infty$, where $(u(t;t_0,u_0,v_0)$, $v(t;t_0,u_0,v_0))$ is the solution of \eqref{u-v-ode} with
 $(u(t_0;t_0,u_0,v_0)$, $v(t_0;t_0,u_0,v_0))=(u_0,v_0)$.
 In addition, if $a_i(t)$ and $b_i(t)$ are almost periodic, then so is $(u^{**}(t)$, $v^{**}(t))$.
\end{lemma}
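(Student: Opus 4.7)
The plan is four steps: uniform persistence, construction of a strictly positive entire solution as a limit of long-time trajectories, global asymptotic stability via a logarithmic Lyapunov functional, and almost periodicity via Bochner's criterion.

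For persistence, discard the nonnegative competition terms and apply scalar logistic comparison to get $\limsup_{t\to\infty}u(t)\le a_{0,\sup}/a_{1,\inf}=:\bar u$ and $\limsup_{t\to\infty}v(t)\le b_{0,\sup}/b_{2,\inf}=:\bar v$ for every positive solution. Plugging these eventual upper bounds back in yields, for $t$ large, $u'\ge u(a_{0,\inf}-a_{2,\sup}\bar v-a_{1,\sup}u)$; condition \eqref{stability-cond-1-eq1} makes $a_{0,\inf}-a_{2,\sup}\bar v>0$, so $\liminf_{t\to\infty}u(t)>0$, and symmetrically $\liminf_{t\to\infty}v(t)>0$. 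This produces a compact attracting rectangle $R\subset(0,\infty)^2$ into which every positive solution eventually enters. Existence of the entire solution then follows by a standard diagonal construction: pick $t_n\to-\infty$, solve from time $t_n$ with initial datum in $R$, observe that these trajectories stay in a slight enlargement of $R$ for all $t\ge t_n$ with uniformly bounded derivatives on every compact interval of $\RR$, and apply Ascoli--Arzel\`a to extract a locally uniform limit $(u^{**},v^{**})$, which is a strictly positive bounded entire solution.

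For global asymptotic stability and uniqueness, given any positive solution $(u,v)$ I introduce
\begin{equation*}
V(t)=\bigl|\ln u(t)-\ln u^{**}(t)\bigr|+\alpha\bigl|\ln v(t)-\ln v^{**}(t)\bigr|
\end{equation*}
with $\alpha>0$ to be chosen. Using $(\ln u)'=a_0-a_1u-a_2v$ (and analogously for the three other logarithms) together with $\sgn(\ln(p/q))=\sgn(p-q)$ for $p,q>0$, a direct computation gives
\begin{equation*}
D^+V(t)\le -\bigl(a_1(t)-\alpha b_1(t)\bigr)|u(t)-u^{**}(t)|-\bigl(\alpha b_2(t)-a_2(t)\bigr)|v(t)-v^{**}(t)|.
\end{equation*}
Picking $\alpha\in(a_{2,\sup}/b_{2,\inf},\,a_{1,\inf}/b_{1,\sup})$, this yields $D^+V\le -c(|u-u^{**}|+|v-v^{**}|)$ for some $c>0$. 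Since $V$ is nonnegative and bounded (both solutions lie in $R$, so the logarithms are bounded), integrating produces $\int^{\infty}(|u-u^{**}|+|v-v^{**}|)\,dt<\infty$; combined with the uniform continuity of the integrand (the derivatives of $u,v,u^{**},v^{**}$ are uniformly bounded on $[t_0,\infty)$), this forces $|u(t)-u^{**}(t)|+|v(t)-v^{**}(t)|\to 0$ as $t\to\infty$. Applying the same bound to two entire positive solutions gives uniqueness.

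Almost periodicity of $(u^{**},v^{**})$ when the $a_i,b_i$ are almost periodic is then verified via Bochner's criterion: for any sequence $\tau_n\subset\RR$, extract a subsequence along which all translated coefficients converge uniformly on $\RR$ to coefficients satisfying the same hypotheses; the limit system has its own unique entire positive solution by the stability step, and continuous dependence plus uniqueness force $(u^{**}(\cdot+\tau_n),v^{**}(\cdot+\tau_n))$ to converge uniformly on $\RR$ to that solution. The main obstacle is the choice of $\alpha$: a nonempty interval $(a_{2,\sup}/b_{2,\inf},\,a_{1,\inf}/b_{1,\sup})$ requires $a_{1,\inf}b_{2,\inf}>a_{2,\sup}b_{1,\sup}$, which is slightly stronger than \eqref{stability-cond-1-eq1}; bridging this gap (compare Remark~\ref{rmk-stability}(1)) is the delicate point, and one may have to strengthen the hypothesis or replace $V$ by a weighted, Ahmad--Lazer-type Lyapunov functional calibrated to the time-dependent coefficients.
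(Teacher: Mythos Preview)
The paper does not give its own proof of this lemma: it is stated as a citation of \cite[Lemma~4.1]{ITBWS17a}, so there is nothing to compare against. Your four-step outline (persistence box, pull-back/diagonal construction of an entire solution, logarithmic Lyapunov functional for convergence, Bochner's criterion for almost periodicity) is the standard route for this type of Lotka--Volterra ODE result and is essentially correct.

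The one place you flag as a gap is not a gap. You say the Lyapunov argument needs $a_{1,\inf}b_{2,\inf}>a_{2,\sup}b_{1,\sup}$ in order to choose $\alpha\in\bigl(a_{2,\sup}/b_{2,\inf},\,a_{1,\inf}/b_{1,\sup}\bigr)$, and you worry this is stronger than \eqref{stability-cond-1-eq1}. In fact it is a consequence of \eqref{stability-cond-1-eq1}: multiplying the two inequalities in \eqref{stability-cond-1-eq1} gives
\[
a_{0,\inf}b_{0,\inf}\,a_{1,\inf}b_{2,\inf}>a_{0,\sup}b_{0,\sup}\,a_{2,\sup}b_{1,\sup},
\]
and since $a_{0,\inf}b_{0,\inf}\le a_{0,\sup}b_{0,\sup}$, this forces $a_{1,\inf}b_{2,\inf}>a_{2,\sup}b_{1,\sup}$. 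So the interval for $\alpha$ is nonempty under exactly the stated hypothesis, and no strengthening or alternative functional is required. With this observation your Lyapunov step yields exponential decay of $V$ (use $|p-q|\ge m\,|\ln p-\ln q|$ on the attracting rectangle to turn $D^+V\le -c(|u-u^{**}|+|v-v^{**}|)$ into $D^+V\le -\gamma V$), which in turn gives the uniform-in-$t_0$ convergence needed both for the global attractivity statement and for uniqueness of the entire solution via the pull-back argument $t_0\to-\infty$.
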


\section{Optimal attracting rectangle and proof of Theorem \ref{attracting-rectangle-thm}}

In this section, we construct optimal attracting rectangles for \eqref{u-v-w-eq00} and  prove  Theorem \ref{attracting-rectangle-thm}.  We first prove two important lemmas.

\begin{lemma}
\label{iteration-lm-00}
Consider \eqref{u-v-w-eq00}. For given $u_0,v_0 \in { C^+(\bar{\Omega})},$  let $\overline{u}_0=\max_{x \in \bar \Omega}u_0(x),$ $\underline{u}_0=\min_{x \in \bar \Omega}u_0(x),$ $\overline{v}_0=\max_{x \in \bar \Omega}v_0(x)$ , $\underline{v}_0=\min_{x \in \bar \Omega}v_0(x)$.

\begin{itemize}
\item[(1)] Assume (H5).  Let $\underbar r_1^0=\underbar r_2^0=0$, $\bar r_1^0=\bar A_1$, $\bar r_2^0=\bar A_2$,  and
\begin{equation}
\label{r-iteration-eq}
\begin{cases}
  \bar r_1^n=\frac{a_{0,\sup}-a_{2,\inf}\underbar r_2^{n-1}-k\frac{\chi_1}{d_3}\underbar r_1^{n-1}}{a_{1,\inf}-k\frac{\chi_1}{d_3}}\cr
 \bar r_2^n=\frac{b_{0,\sup}-b_{1,\inf}\underbar r_1^{n-1}-k\frac{\chi_1}{d_3}\underbar r_2^{n-1}}{b_{2,\inf}-l\frac{\chi_2}{d_3}}\cr
\underbar r_1^n=\frac{a_{0,\inf}-a_{2,\sup}\bar r_2^{n}-k\frac{\chi_1}{d_3}\bar r_1^{n}}{a_{1,\sup}-k\frac{\chi_1}{d_3}}\cr
\underbar r_2^n=\frac{b_{0,\inf}-b_{1,\sup}\bar r_1^{n}-l\frac{\chi_2}{d_3}\bar r^{n}_{ 2}}{b_{2,\sup}-l\frac{\chi_2}{d_3}}
\end{cases}
\end{equation}
for $n=1,2,\cdots$.
Then
\begin{equation}
\label{itern-eq05aa}
\begin{cases}
0<\underbar r_1^{n-1}\leq \underbar r_1^n\leq \bar r_1^n\leq \bar r^{n-1}_1\le \bar A_1\cr
0<\underbar r_2^{n-1}\leq \underbar r_2^n\leq \bar r_2^n\leq \bar r_2^{n-1}\le \bar A_2
\end{cases}
\end{equation}
for $n={ 2,\cdots}$,
and for any given $u_0,v_0 \in C^+(\bar{\Omega})$ with $\inf u_0>0$, $\inf v_0>0$,   $\epsilon>0$,  and $n\in \mathbb{N}$ { with $n\geq 1$,}  there exists
$ t^n_{\epsilon,\overline u_0,\overline v_0,\underline u_0,\underline v_0}\geq t^{n-1}_{\epsilon,\overline u_0,\overline v_0,\underline u_0,\underline v_0}$ ($t^0_{\epsilon,\overline u_0,\overline v_0,\underline u_0,\underline v_0}=0$)
 such that
\begin{equation}\label{itern-eq00a}
\begin{cases}
\underbar r_1^n -\epsilon \le u(x,t;t_0,u_0,v_0) \le \bar r_1^n+\epsilon\cr
\underbar r_2^n -\epsilon \le v(x,t;t_0,u_0,v_0) \le \bar r_2^n+\epsilon,
\end{cases}
\end{equation}
for all $x\in\bar\Omega$, $t_0 \in \mathbb{R}$ and $t\ge t_0+t^n_{\epsilon,\overline u_0,\overline v_0,\underline u_0,\underline v_0}$.

\item[(2)] Assume (H6).  Let $\underbar s_1^0=\underbar s_2^0=0$,  $\bar s_1^0=\bar B_1$, $\bar s_2^0=\bar B_2$, and
 \begin{equation}
 \label{s-iteration-eq}
 \begin{cases}
 \bar s_1^n=\frac{\left(a_{0,\sup}-{ (a_{2,\inf}+l\frac{\chi_1}{d_3})}{\underbar s_2^{n-1}}-k\frac{\chi_1}{d_3}
 {\underbar s_1^{n-1}}\right)(b_{2,\inf}-l\frac{\chi_2}{d_3})}{(a_{1,\inf}-k\frac{\chi_1}{d_3})(b_{2,\inf}-l\frac{\chi_2}{d_3})-lk\frac{\chi_1\chi_2}
{d_3^2}}+\frac{\frac{l\chi_1}{d_3}\left(b_{0,\sup}-{ (b_{1,\inf}+k\frac{\chi_2}{d_3}) }{\underbar s_1^{n-1}}-l\frac{\chi_2}{d_3}{\underbar s_2^{n-1}}\right)}{(a_{1,\inf}-k\frac{\chi_1}{d_3})(b_{2,\inf}-l\frac{\chi_2}{d_3})-lk\frac{\chi_1\chi_2}{d_3^2}}\cr
\bar s_2^n=\frac{\left(b_{0,\sup}-{ (b_{1,\inf}+k\frac{\chi_2}{d_3})}{\underbar s_1^{n-1}}-l\frac{\chi_2}{d_3}{\underbar s_2^{n-1}}\right)(a_{1,\inf}-k\frac{\chi_1}{d_3})} {(a_{1,\inf}-k\frac{\chi_1}{d_3})(b_{2,\inf}-l\frac{\chi_2}{d_3})-lk\frac{\chi_1\chi_2}{d_3^2}}+\frac{\frac{k\chi_2}{d_3}\left(a_{0,\sup}
-{ (a_{2,\inf}+l\frac{\chi_1}{d_3}) } {\underbar s_2^{n-1}}-k\frac{\chi_1}{d_3}{\underbar s_1^{n-1}}\right)}{(a_{1,\inf}-k\frac{\chi_1}{d_3})(b_{2,\inf}-l\frac{\chi_2}{d_3})-lk\frac{\chi_1\chi_2}{d_3^2}}\cr
\underbar s_1^n=\frac{\left(a_{0,\inf}-{ (a_{2,\sup}+l\frac{\chi_1}{d_3})}{\bar s_2^n}-k\frac{\chi_1}{d_3}{\bar s_1^n}\right)(b_{2,\sup}-l\frac{\chi_2}{d_3})}{(a_{1,\sup}-k\frac{\chi_1}{d_3})(b_{2,\sup}-l\frac{\chi_2}{d_3})-lk\frac{\chi_1\chi_2}
{d_3^2}}+\frac{\frac{l\chi_1}{d_3}\left(b_{0,\inf}-{ (b_{1,\sup}+k\frac{\chi_2}{d_3}) }{\bar s_1^n}-l\frac{\chi_2}{d_3}{\bar s_2^n}\right)}{(a_{1,\sup}-k\frac{\chi_1}{d_3})(b_{2,\sup}-l\frac{\chi_2}{d_3})-lk\frac{\chi_1\chi_2}{d_3^2}}\cr
\underbar s_2^n=\frac{\left(b_{0,\inf}-{ (b_{1,\sup}+k\frac{\chi_2}{d_3})}{\bar s_1^n}-l\frac{\chi_2}{d_3}{\bar s_2^n}\right)(a_{1,\sup}-k\frac{\chi_1}{d_3})} {(a_{1,\sup}-k\frac{\chi_1}{d_3})(b_{2,\sup}-l\frac{\chi_2}{d_3})-lk\frac{\chi_1\chi_2}{d_3^2}}+\frac{\frac{k\chi_2}{d_3}\left(a_{0,\inf}
-{ (a_{2,\sup}+l\frac{\chi_1}{d_3}) } {\bar s_2^n}-k\frac{\chi_1}{d_3}{\bar s_1^n}\right)}{(a_{1,\sup}-k\frac{\chi_1}{d_3})(b_{2,\sup}-l\frac{\chi_2}{d_3})-lk\frac{\chi_1\chi_2}{d_3^2}}
\end{cases}
\end{equation}
for $n=1,2,\cdots$.
Then
\begin{equation}
\label{s-itern-eq05aa}
\begin{cases}
0<\underbar s_1^{n-1}\leq \underbar s_1^n\leq \bar s_1^n\leq \bar s^{n-1}_1\le \bar B_1\cr
0<\underbar s_2^{n-1}\leq \underbar s_2^n\leq \bar s_2^n\leq \bar s_2^{n-1}\le B_2
\end{cases}
\end{equation}
for $n={2,\cdots}$,
and for any given $u_0,v_0 \in C^+(\bar{\Omega})$ with $\inf u_0>0$, $\inf v_0>0$,   $\epsilon>0$,  and $n\in \mathbb{N}$ { with $n\geq 1,$} there exists
$ t^n_{\epsilon,\overline u_0,\overline v_0,\underline u_0,\underline v_0}\geq t^{n-1}_{\epsilon,\overline u_0,\overline v_0,\underline u_0,\underline v_0}$ ($t^0_{\epsilon,\overline u_0,\overline v_0,\underline u_0,\underline v_0}=0$)
 such that
\begin{equation}\label{s-itern-eq00a}
\begin{cases}
\underbar s_1^n -\epsilon \le u(x,t;t_0,u_0,v_0) \le \bar s_1^n+\epsilon\cr
\underbar s_2^n -\epsilon \le v(x,t;t_0,u_0,v_0) \le \bar s_2^n+\epsilon,
\end{cases}
\end{equation}
for all $x\in\bar\Omega$, $t_0 \in \mathbb{R}$ and $t\ge t_0+t^n_{\epsilon,\overline u_0,\overline v_0,\underline u_0,\underline v_0}$.

\end{itemize}
\end{lemma}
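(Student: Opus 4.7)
The plan is to proceed by induction on $n$, with the base case $n=0$ supplied by Theorem \ref{thm-global-000}: under (H1) (implied by (H5)), the bounds $\bar r_1^0=\bar A_1$, $\bar r_2^0=\bar A_2$ together with the trivial $\underbar r_1^0=\underbar r_2^0=0$ make \eqref{itern-eq00a} hold at step $n=0$ for some $t^0_{\epsilon,\overline u_0,\overline v_0,\underline u_0,\underline v_0}\ge 0$; part (2) starts similarly with $\bar s_i^0=\bar B_i$ from Theorem \ref{thm-global-000}(2).

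For the inductive step, suppose \eqref{itern-eq00a} holds at step $n-1$, so that from time $t_0+t^{n-1}$ onward the solution lies in a small $\epsilon'$-enlargement of $[\underbar r_1^{n-1},\bar r_1^{n-1}]\times[\underbar r_2^{n-1},\bar r_2^{n-1}]$. Treating that time as the new initial time, we apply a PDE--ODE comparison. For part (2) the natural tool is Lemma \ref{lem-1-ode000} applied to the ODE \eqref{ode00} with initial data $(\overline u_0,\underline u_0,\overline v_0,\underline v_0)$ matching the step-$(n-1)$ bounds; combining this with an equilibrium analysis of \eqref{ode00}, whose unique positive steady state with these external inputs is exactly $(\bar s_1^n,\underbar s_1^n,\bar s_2^n,\underbar s_2^n)$ by \eqref{s-iteration-eq}, and the long-time ODE convergence of Lemma \ref{lem-1-ode00}(ii), yields \eqref{s-itern-eq00a}. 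For part (1) under (H5), we derive a sharper ODE by substituting $\Delta w=(\lambda w-ku-lv)/d_3$ from the third equation of \eqref{u-v-w-eq00} into the first two, evaluating at a spatial maximum of $u$ where $\nabla u=0$ and $\Delta u\le 0$, invoking the elliptic minimum principle for $w$ to estimate $w(x^*)\ge(k\min u+l\min v)/\lambda$, and using that $a_{2,\inf}-l\chi_1/d_3\ge 0$ under (H1) to bound the remaining $v(x^*)$-term via $\min v$. Analogous computations at spatial minima of $u$ and at extrema of $v$ produce a four-dimensional ODE whose unique positive equilibrium is precisely $(\bar r_1^n,\underbar r_1^n,\bar r_2^n,\underbar r_2^n)$, as given by \eqref{r-iteration-eq}.

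The monotonicity \eqref{itern-eq05aa} (and \eqref{s-itern-eq05aa}) is then established by a separate induction on $n$, exploiting that the update maps in \eqref{r-iteration-eq} and \eqref{s-iteration-eq} are order-reversing: each $\bar r_i^n$ is a decreasing function of $\underbar r_j^{n-1}$, and each $\underbar r_i^n$ is a decreasing function of $\bar r_j^n$. Starting from the extremal initial rectangle $[0,\bar A_1]\times[0,\bar A_2]$ (resp.\ $[0,\bar B_1]\times[0,\bar B_2]$) therefore generates a nested sequence of rectangles. Positivity of the lower bounds from $n=1$ onwards is guaranteed by the strict inequalities in (H5) (resp.\ (H6)), which make the numerators in the update formulas for $\underbar r_1^1$ and $\underbar r_2^1$ (resp.\ $\underbar s_1^1$ and $\underbar s_2^1$) strictly positive, and this is preserved under the iteration by monotonicity.

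The main technical obstacle is the derivation and analysis of the sharper ODE comparison needed for part (1): it requires simultaneously using the parabolic maximum principle on $u,v$ and the elliptic minimum principle on $w$, and then verifying that the resulting coupled nonlinear four-dimensional ODE globally attracts to its unique positive equilibrium under the standing hypotheses. Once this ODE comparison is in place, both parts reduce to a uniform iterative scheme, and extracting the updated time threshold $t^n_{\epsilon,\overline u_0,\overline v_0,\underline u_0,\underline v_0}\ge t^{n-1}_{\epsilon,\overline u_0,\overline v_0,\underline u_0,\underline v_0}$ is routine.
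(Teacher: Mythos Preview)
Your inductive framework and the monotonicity argument for \eqref{itern-eq05aa} are fine, but the comparison step you propose is both different from the paper's and, as you describe it, contains a genuine confusion.

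The paper never evaluates at spatial extrema and never builds a coupled four-dimensional ODE. After expanding $\nabla\cdot(u\nabla w)=\nabla u\cdot\nabla w+u\Delta w$ and substituting $\Delta w=(\lambda w-ku-lv)/d_3$, it obtains a \emph{scalar} parabolic inequality
\[
u_t\le d_1\Delta u-\chi_1\nabla w\cdot\nabla u+u\big(\alpha-\beta u\big),
\]
with $\alpha,\beta$ constants determined by the step-$(n{-}1)$ bounds, and compares directly (parabolic comparison, treating $-\chi_1\nabla w$ as drift) with the spatially constant solution of $U'=U(\alpha-\beta U)$. Four such \emph{decoupled} scalar comparisons---two upper bounds first, then two lower bounds using the just-established upper bounds---give the step-$n$ rectangle. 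For part~(2) the same idea is used, except the two upper (resp.\ lower) inequalities form a $2\times2$ cooperative system whose globally attracting positive equilibrium is $(\bar s_1^n,\bar s_2^n)$ (resp.\ $(\underbar s_1^n,\underbar s_2^n)$). Because scalar logistic ODEs and planar cooperative systems converge trivially, the ``main technical obstacle'' you flag simply does not arise.

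Your proposal for part~(2) routes through the autonomous system \eqref{ode00} via Lemma~\ref{lem-1-ode000}, and here there is a real gap. The system \eqref{ode00} has no ``external inputs'': its positive equilibrium, if unique, is the \emph{final} rectangle $(\bar s_1,\underbar s_1,\bar s_2,\underbar s_2)$, not the $n$-th iterate $(\bar s_1^n,\underbar s_1^n,\bar s_2^n,\underbar s_2^n)$ as you assert. Moreover, Lemma~\ref{lem-1-ode00}(ii) supplies only the $\limsup$ bounds $\bar B_i$, not convergence to any equilibrium. Making your route work would require proving global attractivity of the four-dimensional equilibrium of \eqref{ode00}---which is essentially the content of Theorem~\ref{attracting-rectangle-thm} itself and needs the extra uniqueness hypothesis on \eqref{attracrting-rectangle-eq2} that Lemma~\ref{iteration-lm-00} does \emph{not} assume. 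The paper's decoupled comparisons sidestep all of this.
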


\begin{proof}
(1) First of all,  note that $\bar r_1^1=\bar r_1^0$ and $\bar r_2^1=\bar r_2^0$, and by (H5),
$0<\underbar r_1^1\le \bar r_1^1$ and $0<\underbar r_2^1\le\bar r_2^1$. \eqref{itern-eq05aa}
then follows from \eqref{r-iteration-eq} directly.

We then prove \eqref{itern-eq00a}. We do so by induction.

First we   claim that there exists { $ t^1_{\epsilon,\overline u_0,\overline v_0,\underline u_0,\underline v_0}\ge 0$}
 such that
\begin{equation}\label{iter1-eq00}
\begin{cases}
\underbar r_1^1 -\epsilon \le u(x,t;t_0,u_0,v_0) \le \bar r_1^1+\epsilon\cr
\underbar r_2^1 -\epsilon \le v(x,t;t_0,u_0,v_0) \le \bar r_2^1+\epsilon
\end{cases}
\end{equation}
for all $x\in\bar\Omega$, $t_0 \in \mathbb{R}$ and { $t\ge t_0+t^1_{\epsilon,\overline u_0,\overline v_0,\underline u_0,\underline v_0}$}.

In fact,  from the first and third equations of \eqref{u-v-w-eq00}, we get
\begin{equation}\label{iter1-eq01}
   u_t \leq d_1\Delta u -\chi_1 \nabla w \cdot \nabla u +u\big(a_{0,\sup}-(a_{1,\inf}-k\frac{\chi_1}{d_3})u\big).
\end{equation}
Let $u(t;t_0,\overline u_0)$ be the solution of
$$u'=u\big(a_{0,\sup}-(a_{1,\inf}-k\frac{\chi_1}{d_3})u\big)$$
 with $u(t_0;t_0,\overline u_0)=\overline u_0.$ Then by solving, we get
\begin{equation}\label{iter1-eq02}
  u(t;t_0,\overline u_0)=\frac{c_0a}{c_0b-e^{-a(t-t_0)}} \quad \forall \,\, t\geq  t_0,
\end{equation}
where $a=a_{0,\sup},$ $b=a_{1,\inf}-k\frac{\chi_1}{d_3},$ and $c_0=\frac{\overline u_0}{b\overline u_0-a}.$ (Actually $u(t;t_0,\overline u_0)>0$ for all $t>t_0-\frac{\ln(c_0b)}{a}$ and blows up in backward time at $t^*=t_0-\frac{\ln(c_0b)}{a}<t_0.$)
Then it follows from \eqref{iter1-eq01}, { \eqref{iter1-eq02}} and parabolic comparison principle that
$$
u(x,t;t_0,u_0,v_0)\leq \frac{c_0a}{c_0b-e^{-a(t-t_0)}} \quad \forall \,\, t\geq   t_0, \quad \forall \,\, t_0 \in \mathbb{R}.
$$
Thus
$$
u(x,t+t_0;t_0,u_0,v_0)\leq u(t+t_0;t_0,\overline u_0)=\frac{c_0a}{c_0b-e^{-at}} \quad \forall\,\,  t\geq  0 ,\quad \forall\,\,  t_0 \in \mathbb{R}.
$$
Therefore there is  $ t^1_{\epsilon,\overline u_0}>0$ such that
\begin{equation}\label{iter1-eq03}
  u(x,t;t_0,u_0,v_0)\leq \bar r_1^1+\epsilon \,\, \forall t\geq t_0+t^1_{\epsilon,\overline u_0},\,\quad \forall\,\,  t_0 \in \mathbb{R}.
\end{equation}

Similarly using the second and third equation of \eqref{u-v-w-eq00},  there exists  $t^1_{\epsilon,\overline v_0}>0$ such that
\begin{equation}\label{iter1-eq04}
  v(x,t;t_0,u_0,v_0)\leq \bar r_2^1+\epsilon \,\, \forall\,\,  t\geq t_0+t^1_{\epsilon,\overline v_0},\,\quad \forall\,\,  t_0 \in \mathbb{R}.
\end{equation}

Choose $0<\tilde \epsilon\le \epsilon$ such that
$$
\frac{a_{0,\inf}-a_{2,\sup}\bar r_2^1-k\frac{\chi_1}{d_3}\bar r_1^1-\tilde \epsilon\big({ a_{2,\sup}+k\frac{\chi_1}{d_3}}\big)}{a_{1,\sup}-\frac{k\chi_1}{d_3}}-\tilde \epsilon\ge \underbar r_1^1-\epsilon.
$$
Let $t^1_{\tilde \epsilon,\overline u_0,\overline v_0}=\max\{t_{\tilde \epsilon,\overline u_0},t_{\tilde \epsilon,\overline v_0}\}.$ Then for $t\geq t^1_{\tilde \epsilon,\overline u_0,\overline v_0}$,   from \eqref{iter1-eq03}, \eqref{iter1-eq04}, the first and third equations of \eqref{u-v-w-eq00}, we get
\begin{equation*}
   u_t \geq d_1\Delta u -\chi_1 \nabla w \cdot \nabla u +u\big(a_{0,\inf}-a_{2,\sup}\bar r_2^1-k\frac{\chi_1}{d_3}\bar r_1^1-(a_{1,\sup}-k\frac{\chi_1}{d_3})u-\tilde \epsilon\big({a_{2,\sup}+k\frac{\chi_1}{d_3}}\big)\big).
\end{equation*}
Thus similar arguments as those lead to \eqref{iter1-eq03} implies that there is
$t^1_{\epsilon,\underline u_0,\overline u_0,\overline v_0}\geq t^1_{\epsilon,\overline u_0,\overline v_0} $ such that
\begin{equation}\label{iter1-eq06}
  \underbar r_1^1-\epsilon \leq u(x,t;t_0,u_0,v_0)\,\, \forall \,\, t\geq t_0+t^1_{\epsilon,\underline u_0,\overline u_0,\overline v_0},\,\quad \forall\,\,  t_0 \in \mathbb{R}.
\end{equation}
Similarly, from \eqref{iter1-eq03}, \eqref{iter1-eq04}, the second and third equation of \eqref{u-v-w-eq00} and similar arguments as those lead to \eqref{iter1-eq03},
there is $ t^1_{\epsilon,\underline v_0,\overline u_0,\overline v_0}\geq t^1_{\epsilon,\overline v_0,\overline v_0} $ such that
\begin{equation}\label{iter1-eq07}
  \underbar r_2^1-\epsilon \leq v(x,t;t_0,u_0,v_0)\,\, \forall\,\,  t\geq t_0+t^1_{\epsilon,\underline u_0,\overline u_0,\overline v_0},\, \quad \forall \,\, t_0 \in \mathbb{R}.
\end{equation}
Choose $t^1_{\epsilon,\overline u_0,\overline v_0,\underline u_0,\underline v_0}=\max\{t_{\epsilon,\underline u_0,\overline u_0,\overline v_0},t_{\epsilon,\underline v_0,\overline u_0,\overline v_0}\} (\ge 0)$. Then  \eqref{iter1-eq00} follows from \eqref{iter1-eq03}, \eqref{iter1-eq04}, \eqref{iter1-eq06} and \eqref{iter1-eq07}.

\medskip

Next, assume that for any $\epsilon>0$, there is
 { $ t^k_{\epsilon,\overline u_0,\overline v_0,\underline u_0,\underline v_0}\geq t^{k-1}_{\epsilon,\overline u_0,\overline v_0,\underline u_0,\underline v_0}$} ($k\ge 2$)
 such that
\begin{equation}\label{iter2-eq000}
\begin{cases}
\underbar r_1^k -\epsilon \le u(x,t;t_0,u_0,v_0) \le \bar r_1^k+\epsilon\cr
\underbar r_2^k -\epsilon \le v(x,t;t_0,u_0,v_0) \le \bar r_2^k+\epsilon
\end{cases}
\end{equation}
for all $x\in\bar\Omega$, $t_0 \in \mathbb{R}$ and  { $t\ge t_0+t^k_{\epsilon,\overline u_0,\overline v_0,\underline u_0,\underline v_0}$}.
We claim that there is  there is
 { $ t^{k+1}_{\epsilon,\overline u_0,\overline v_0,\underline u_0,\underline v_0}\geq t^{k}_{\epsilon,\overline u_0,\overline v_0,\underline u_0,\underline v_0}$} ($k\ge 2$)
 such that
\begin{equation}\label{iter2-eq000-1}
\begin{cases}
\underbar r_1^{k+1} -\epsilon \le u(x,t;t_0,u_0,v_0) \le \bar r_1^{k+1}+\epsilon\cr
\underbar r_2^{k+1} -\epsilon \le v(x,t;t_0,u_0,v_0) \le \bar r_2^{k+1}+\epsilon
\end{cases}
\end{equation}
for all $x\in\bar\Omega$, $t_0 \in \mathbb{R}$ and  { $t\ge t_0+t^{k+1}_{\epsilon,\overline u_0,\overline v_0,\underline u_0,\underline v_0}$}.

In fact,
choose $0<\tilde \epsilon\le \epsilon$ such that
$$
\frac{a_{0,\sup}-a_{2,\inf}\underbar r_2^k-k\frac{\chi_1}{d_3}\underbar r_1^k{ -}\tilde \epsilon\big({ a_{2,\inf}+k\frac{\chi_1}{d_3}}\big)}{a_{1,\inf}-\frac{k\chi_1}{d_3}}+\tilde \epsilon\le \bar r_1^{k+1}+\epsilon
$$
and
$$
\frac{a_{0,\inf}-a_{2,\sup}\bar r_2^{k+1}-k\frac{\chi_1}{d_3}\bar r_1^{k+1}-\tilde \epsilon\big({ a_{2,\sup}+k\frac{\chi_1}{d_3}}\big)}{a_{1,\sup}-\frac{k\chi_1}{d_3}}-\tilde \epsilon\ge \underbar r_1^{k+1}-\epsilon.
$$
We have that for $t\ge t_0+t^k_{\tilde \epsilon,\overline u_0,\overline v_0,\underline u_0,\underline v_0}$,
\begin{equation*}
   u_t \le d_1\Delta u -\chi_1 \nabla w \cdot \nabla u +u\big(a_{0,\sup}-a_{2,\inf}\underbar r_2^k-k\frac{\chi_1}{d_3}\underbar r_1^k-(a_{1,\inf}-k\frac{\chi_1}{d_3})u-\tilde \epsilon\big({a_{2,\inf}+k\frac{\chi_1}{d_3}}\big)\big)
\end{equation*}
Then there is $\tilde t^{k+1}_{\epsilon,\overline u_0,\overline v_0,\underline u_0,\underline v_0}\ge  t^{k}_{\epsilon,\overline u_0,\overline v_0,\underline u_0,\underline v_0}$ such that for $t\ge \tilde t^{k+1}_{\epsilon,\overline u_0,\overline v_0,\underline u_0,\underline v_0}$
$$
u(x,t;t_0,u_0,v_0)\le \bar r^{k+1}+\epsilon\quad \forall\,\, x\in\bar\Omega,\,\, \forall\,\, t_0\in\mathbb{R}\,\, \forall \,\, t\ge t_0+\tilde t^{k+1}_{\epsilon,\overline u_0,\overline v_0,\underline u_0,\underline v_0}.
$$
This implies that for $t\ge t_0+\tilde t^{k+1}_{\epsilon,\overline u_0,\overline v_0,\underline u_0,\underline v_0}$,
\begin{equation*}
   u_t \geq d_1\Delta u -\chi_1 \nabla w \cdot \nabla u +u\big(a_{0,\inf}-a_{2,\sup}\bar r_2^{k+1}-k\frac{\chi_1}{d_3}\bar r_1^{k+1}-(a_{1,\sup}-k\frac{\chi_1}{d_3})u-\tilde \epsilon\big({ a_{2,\sup}+k\frac{\chi_1}{d_3}}\big)\big).
\end{equation*}
It then follows that there is $\bar t^{k+1}_{\epsilon,\overline u_0,\overline v_0,\underline u_0,\underline v_0}\ge  t^{k}_{\epsilon,\overline u_0,\overline v_0,\underline u_0,\underline v_0}$ such that for $t\ge t_0+\bar t^{k+1}_{\epsilon,\overline u_0,\overline v_0,\underline u_0,\underline v_0}$,
$$
\underbar r_1^{k+1} -\epsilon \le u(x,t;t_0,u_0,v_0) \le \bar r_1^{k+1}+\epsilon.
$$

Similarly, we can prove that there is  $\hat t^{k+1}_{\epsilon,\overline u_0,\overline v_0,\underline u_0,\underline v_0}\ge  t^{k}_{\epsilon,\overline u_0,\overline v_0,\underline u_0,\underline v_0}$ such that for $t\ge t_0+\hat  t^{k+1}_{\epsilon,\overline u_0,\overline v_0,\underline u_0,\underline v_0}$,
$$
\underbar r_2^{k+1} -\epsilon \le v(x,t;t_0,u_0,v_0) \le \bar r_2^{k+1}+\epsilon.
$$
The claim \eqref{iter2-eq000-1} then follows with
$t^{k+1}_{\epsilon,\overline u_0,\overline v_0,\underline u_0,\underline v_0}=\max\{\bar t^{k+1}_{\epsilon,\overline u_0,\overline v_0,\underline u_0,\underline v_0}, \hat t^{k+1}_{\epsilon,\overline u_0,\overline v_0,\underline u_0,\underline v_0}\}.$

Now, by induction, \eqref{itern-eq00a} holds for all $n\ge 1$. This completes the proof of (1).

\medskip

(2) It can be proved by the similar arguments as those in (1). We outline some idea in the following.

 First of all,  note that $\bar s_1^1=\bar s_1^0=\bar B_1$ and $\bar s_2^1=\bar s_2^0=\bar B_1$, and by {(H6)},
$0<\underbar s_1^1\le \bar s_1^1$ and $0<\underbar s_2^1\le\bar s_2^1$. \eqref{s-itern-eq05aa}
then follows from \eqref{s-iteration-eq} directly.

We  prove \eqref{s-itern-eq00a} by induction.

To this end,  we first   claim that there exists { $ t^1_{\epsilon,\overline u_0,\overline v_0,\underline u_0,\underline v_0}\ge 0$}
 such that
\begin{equation}\label{s-iter1-eq00}
\begin{cases}
\underbar s_1^1 -\epsilon \le u(x,t;t_0,u_0,v_0) \le \bar s_1^1+\epsilon\cr
\underbar s_2^1 -\epsilon \le v(x,t;t_0,u_0,v_0) \le \bar s_2^1+\epsilon
\end{cases}
\end{equation}
for all $x\in\bar\Omega$, $t_0 \in \mathbb{R}$ and { $t\ge t_0+t^1_{\epsilon,\overline u_0,\overline v_0,\underline u_0,\underline v_0}$}.

In fact, note that
 \begin{equation*}
\begin{cases}
u_t\le { d_1\Delta u -\chi_1 \nabla w \cdot \nabla u}+  u\big(a_{0,\sup}-{(a_{2,\inf}+l\frac{\chi_1}{d_3})} {\underbar s_2^0}-k\frac{\chi_1}{d_3}{\underbar s_1^0}-(a_{1,\inf}-k\frac{\chi_1}{d_3}) u+l\frac{\chi_1}{d_3}v\big) \\
v_t\le{ d_2\Delta v -\chi_2 \nabla w \cdot \nabla v}+  v\big(b_{0,\sup}-{ (b_{1,\inf}+k\frac{\chi_2}{d_3})} {\underbar s_1^0}-l\frac{\chi_2}{d_3}{\underbar s_2^0}-(b_{2,\inf}-l\frac{\chi_2}{d_3})v
+{ k \frac{\chi_2}{d_3}}u\big).
\end{cases}
\end{equation*}
Then for any $\epsilon>0$, there is  $\bar t^{1}_{\epsilon,\overline u_0,\overline v_0,\underline u_0,\underline v_0}\ge 0$ such that
$$
\begin{cases}
u(x,t;t_0,u_0,v_0)\le \bar s_1^1+\epsilon\cr
v(x,t;t_,u_0,v_0)\le \bar s_2^1+\epsilon
\end{cases}
$$
for all $x\in\Omega$, $t_0\in \mathbb{R}$, and $t\ge t_0+\bar t^{1}_{\epsilon,\overline u_0,\overline v_0,\underline u_0,\underline v_0}$.
This implies that for any $\tilde\epsilon>0$, $t\ge t_0+\bar t^{1}_{\tilde \epsilon,\overline u_0,\overline v_0,\underline u_0,\underline v_0}$,
\begin{equation*}
\begin{cases}
u_t\ge  { d_1\Delta u -\chi_1 \nabla w \cdot \nabla u}+u\big(a_{0,\inf}-{ (a_{2,\sup}+l\frac{\chi_1}{d_3})} {(\bar s_2^1+\tilde\epsilon)}-k\frac{\chi_1}{d_3}{(\bar s_1^1+\tilde\epsilon)}-(a_{1,\sup}-k\frac{\chi_1}{d_3}) u+l\frac{\chi_1}{d_3}v\big) \\
v_t\ge  { d_2\Delta v -\chi_2 \nabla w \cdot \nabla v}+ v\big(b_{0,\inf}-{ (b_{1,\sup}+k\frac{\chi_2}{d_3})} {(\bar s_1^1+\tilde\epsilon )}-l\frac{\chi_2}{d_3}{(\bar s_2^1+\tilde \epsilon)}-(b_{2,\sup}-l\frac{\chi_2}{d_3})v+{ k \frac{\chi_2}{d_3}}u\big).
\end{cases}
\end{equation*}
Choose $0<\tilde \epsilon<\epsilon$ such that
$$
\begin{cases}
\frac{\big(a_{0,\inf}-{ (a_{2,\sup}+l\frac{\chi_1}{d_3})}{(\bar s_2^1+\tilde\epsilon)}-k\frac{\chi_1}{d_3}{(\bar s_1^1+\tilde \epsilon)}\big)(b_{2,\sup}-l\frac{\chi_2}{d_3})}{(a_{1,\sup}-k\frac{\chi_1}{d_3})(b_{2,\sup}-l\frac{\chi_2}{d_3})-lk\frac{\chi_1\chi_2}
{d_3^2}}+\frac{\frac{l\chi_1}{d_3}\left(b_{0,\inf}-{ (b_{1,\sup}+k\frac{\chi_2}{d_3}) }{(\bar s_1^1+\tilde\epsilon)}-l\frac{\chi_2}{d_3}{
(\bar s_2^1+\tilde\epsilon)}\right)}{(a_{1,\sup}-k\frac{\chi_1}{d_3})(b_{2,\sup}-l\frac{\chi_2}{d_3})-lk\frac{\chi_1\chi_2}{d_3^2}}> \underbar s_1^1-\epsilon
\cr

\frac{\big(b_{0,\inf}-{ (b_{1,\sup}+k\frac{\chi_2}{d_3})}{(\bar s_1^1+\tilde\epsilon)}-l\frac{\chi_2}{d_3}{
(\bar s_2^1+\tilde\epsilon)}\big)(a_{1,\sup}-k\frac{\chi_1}{d_3})} {(a_{1,\sup}-k\frac{\chi_1}{d_3})(b_{2,\sup}-l\frac{\chi_2}{d_3})-lk\frac{\chi_1\chi_2}{d_3^2}}+\frac{\frac{k\chi_2}{d_3}\left(a_{0,\inf}
-{ (a_{2,\sup}+l\frac{\chi_1}{d_3}) } {(\bar s_2^1+\tilde\epsilon)}-k\frac{\chi_1}{d_3}{(\bar s_1^1+\tilde\epsilon)}\right)}{(a_{1,\sup}-k\frac{\chi_1}{d_3})(b_{2,\sup}-l\frac{\chi_2}{d_3})-lk\frac{\chi_1\chi_2}{d_3^2}}>\underbar s_2^1-\epsilon.
\end{cases}
$$
Then there is $ t^{1}_{\epsilon,\overline u_0,\overline v_0,\underline u_0,\underline v_0}\ge  \bar t^{1}_{\epsilon,\overline u_0,\overline v_0,\underline u_0,\underline v_0}$ such that
$$
\begin{cases}
u(x,t;t_0,u_0,v_0)\ge \underbar s_1^1-\epsilon\cr
v(x,t;t_,u_0,v_0)\ge \underbar s_2^1-\epsilon
\end{cases}
$$
 for $x\in\Omega$, $t_0\in\mathbb{R}$, and $t\ge t_0+t^{1}_{\epsilon,\overline u_0,\overline v_0,\underline u_0,\underline v_0}$.
 The claim \eqref{s-iter1-eq00} then follows.

Next,  assume that for any $\epsilon>0$, there is
 { $ t^k_{\epsilon,\overline u_0,\overline v_0,\underline u_0,\underline v_0}\geq t^{k-1}_{\epsilon,\overline u_0,\overline v_0,\underline u_0,\underline v_0}$} ($k\ge 2$)
 such that
\begin{equation}\label{iter2-eq000}
\begin{cases}
\underbar s_1^k -\epsilon \le u(x,t;t_0,u_0,v_0) \le \bar s_1^k+\epsilon\cr
\underbar s_2^k -\epsilon \le v(x,t;t_0,u_0,v_0) \le \bar s_2^k+\epsilon
\end{cases}
\end{equation}
for all $x\in\bar\Omega$, $t_0 \in \mathbb{R}$ and  { $t\ge t_0+t^k_{\epsilon,\overline u_0,\overline v_0,\underline u_0,\underline v_0}$}.
By the similar arguments as in (1),  there is  there is
 { $ t^{k+1}_{\epsilon,\overline u_0,\overline v_0,\underline u_0,\underline v_0}\geq t^{k}_{\epsilon,\overline u_0,\overline v_0,\underline u_0,\underline v_0}$} ($k\ge 2$)
 such that
\begin{equation}\label{iter2-eq000-1}
\begin{cases}
\underbar s_1^{k+1} -\epsilon \le u(x,t;t_0,u_0,v_0) \le \bar s_1^{k+1}+\epsilon\cr
\underbar s_2^{k+1} -\epsilon \le v(x,t;t_0,u_0,v_0) \le \bar s_2^{k+1}+\epsilon
\end{cases}
\end{equation}
for all $x\in\bar\Omega$, $t_0 \in \mathbb{R}$ and  { $t\ge t_0+t^{k+1}_{\epsilon,\overline u_0,\overline v_0,\underline u_0,\underline v_0}$}.

\eqref{s-itern-eq00a} then follows by induction and (2) is thus proved.
\end{proof}

\begin{lemma}\label{iteration-lm-01}
Consider \eqref{u-v-w-eq00}.
\begin{itemize}
\item[(1)]
Assume (H5). For  any given $n\in\NN$, $u_0,v_0 \in { C^+(\bar{\Omega})}$ with $u_0,v_0\not \equiv 0$ and $t_0 \in \mathbb{R}$, if
\begin{equation}\label{iteration-lm-01-eq00}
  \underbar r_1^n\leq u_0\leq \bar r_1^n \,\,\text{and}\,\, \underbar r_2^n\leq v_0\leq \bar r_2^n,
\end{equation}
then
\begin{equation}\label{iteration-lm-01-eq01}
  \underbar r_1^n\leq u(x,t;t_0,u_0,v_0)\leq \bar r_1^n \,\,\text{and}\,\, \underbar r_2^n\leq v(x,t;t_0,u_0,v_0)\leq \bar r_2^n \,\, \forall \,\,  t\geq t_0.
\end{equation}

\item[(2)] Assume (H6). For any given $n\in\NN$,  $u_0,v_0 \in { C^+(\bar{\Omega})}$ with  $u_0,v_0\not \equiv 0$ and $t_0 \in \mathbb{R}$, if
\begin{equation}\label{s-iteration-lm-01-eq00}
  \underbar s_1^n\leq u_0\leq \bar s_1^n \,\,\text{and}\,\, \underbar s_2^n\leq v_0\leq \bar s_2^n,
\end{equation}
then
\begin{equation}\label{s-iteration-lm-01-eq01}
  \underbar s_1^n\leq u(x,t;t_0,u_0,v_0)\leq \bar s_1^n \,\,\text{and}\,\, \underbar s_2^n\leq v(x,t;t_0,u_0,v_0)\leq \bar s_2^n \,\, \forall\,\,  t\geq t_0.
\end{equation}
\end{itemize}
\end{lemma}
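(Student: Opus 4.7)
The proof proceeds by induction on $n$, running parts (1) and (2) in parallel. The base case $n=0$, where $\underbar r_i^0=\underbar s_i^0=0$ and $(\bar r_1^0,\bar r_2^0)=(\bar A_1,\bar A_2)$ (resp.\ $(\bar s_1^0,\bar s_2^0)=(\bar B_1,\bar B_2)$), is immediate from Theorem \ref{thm-global-000}: its final clause ``$T(u_0,v_0,\epsilon)$ can be chosen to be zero'' combined with $\epsilon\searrow 0$ supplies the upper bounds, while nonnegativity $u,v\geq 0$ follows from the scalar weak maximum principle applied to each species equation (at a hypothetical interior zero $\nabla u_i=0$, the chemotaxis drift vanishes and $u_i\Delta w=0$).

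For the inductive step in part (1), fix $u_0,v_0\in[\underbar r_1^n,\bar r_1^n]\times[\underbar r_2^n,\bar r_2^n]$; by \eqref{itern-eq05aa} this rectangle sits inside the level-$(n-1)$ one, so the inductive hypothesis yields $\underbar r_i^{n-1}\leq u,v\leq \bar r_i^{n-1}$ for every $t\geq t_0$. Feeding these into $-d_3\Delta w+\lambda w=ku+lv$ with Neumann data, the elliptic maximum principle forces the pointwise estimate
\[
\frac{k\underbar r_1^{n-1}+l\underbar r_2^{n-1}}{\lambda}\leq w(x,t)\leq \frac{k\bar r_1^{n-1}+l\bar r_2^{n-1}}{\lambda}.
\]
Substituting $\Delta w=(\lambda w-ku-lv)/d_3$ into the $u$-equation and invoking (H1) (which makes both $a_1-k\chi_1/d_3>0$ and $a_2-l\chi_1/d_3\geq 0$), together with $v\geq \underbar r_2^{n-1}$ and the lower $w$-bound, collapses the reaction bracket---via the defining relation $(a_{1,\inf}-k\chi_1/d_3)\bar r_1^n=a_{0,\sup}-a_{2,\inf}\underbar r_2^{n-1}-k\chi_1/d_3\,\underbar r_1^{n-1}$ of \eqref{r-iteration-eq}---to exactly $(a_{1,\inf}-k\chi_1/d_3)(\bar r_1^n-u)$. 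The Neumann parabolic maximum principle applied to $u-\bar r_1^n$ then gives $u\leq \bar r_1^n$, and the identical computation for $v$ gives $v\leq \bar r_2^n$. Re-running the elliptic bound with the improved upper estimates upgrades $w\leq (k\bar r_1^n+l\bar r_2^n)/\lambda$, and the symmetric calculation yields the lower bounds $u\geq \underbar r_1^n$, $v\geq \underbar r_2^n$, closing the induction.

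Part (2) follows the same scheme, but since (H2) does not force $a_2-l\chi_1/d_3$ or $b_1-k\chi_2/d_3$ to be nonnegative, one must split $-(a_2-l\chi_1/d_3)v=-a_2v+\tfrac{l\chi_1}{d_3}v$ and retain $v$ pointwise. Combined with the iteration identity $(a_{1,\inf}-k\chi_1/d_3)\bar s_1^n-\tfrac{l\chi_1}{d_3}\bar s_2^n=a_{0,\sup}-(a_{2,\inf}+\tfrac{l\chi_1}{d_3})\underbar s_2^{n-1}-\tfrac{k\chi_1}{d_3}\underbar s_1^{n-1}$ coming from the $2{\times}2$ linear system underlying \eqref{s-iteration-eq}, the same bookkeeping produces the coupled pair
\[
\begin{cases}
u_t\leq d_1\Delta u-\chi_1\nabla u\cdot\nabla w+u\!\left[\big(a_{1,\inf}-\tfrac{k\chi_1}{d_3}\big)(\bar s_1^n-u)+\tfrac{l\chi_1}{d_3}(v-\bar s_2^n)\right]\\
v_t\leq d_2\Delta v-\chi_2\nabla v\cdot\nabla w+v\!\left[\big(b_{2,\inf}-\tfrac{l\chi_2}{d_3}\big)(\bar s_2^n-v)+\tfrac{k\chi_2}{d_3}(u-\bar s_1^n)\right],
\end{cases}
\]
which is a quasi-monotone nondecreasing (cooperative) parabolic system in $(u,v)$. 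The constant pair $(\bar s_1^n,\bar s_2^n)$ is a joint supersolution since both brackets vanish there, so the comparison principle for cooperative systems with Neumann data (applied at a hypothetical first crossing time, where the off-diagonal term is $\leq 0$ by construction of the crossing) delivers $u\leq \bar s_1^n$ and $v\leq \bar s_2^n$ simultaneously; the lower bounds follow from the mirror argument after upgrading the elliptic bounds on $w$. The principal obstacle is precisely this coupling: because \eqref{s-iteration-eq} ties $\bar s_1^n$ to $\bar s_2^n$ through the chemotaxis rates $\chi_1,\chi_2$, no separate one-species maximum principle can succeed, and the cooperative-system comparison is indispensable.
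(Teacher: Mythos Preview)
Your proof is correct and follows essentially the same inductive scheme as the paper: both arguments use the nesting \eqref{itern-eq05aa} to invoke the level-$(n-1)$ invariance, substitute $\Delta w=(\lambda w-ku-lv)/d_3$ to produce a scalar logistic differential inequality whose equilibrium is exactly $\bar r_1^n$ (resp.\ $\underbar r_1^n$), and conclude via the parabolic comparison principle with the drift $-\chi_1\nabla w\cdot\nabla u$. Your treatment is slightly more explicit than the paper's in two respects---you spell out the elliptic maximum-principle bound on $w$ that the paper uses tacitly, and for part~(2) you correctly identify that (H2) forces a coupled cooperative comparison (the paper simply writes ``by the similar arguments as those in (1)'')---but the underlying mechanism is identical.
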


\begin{proof}
(1)  For given $n\in\NN$, suppose \eqref{iteration-lm-01-eq00} holds.  We prove \eqref{iteration-lm-01-eq01} holds  in two steps.

\medskip
\noindent {\bf Step 1. } We prove in this step
that the following holds for $k=1$,
\begin{equation}\label{iteration-lm-01-eq01-0}
  \underbar r_1^k\leq u(x,t;t_0,u_0,v_0)\leq \bar r_1^k \,\,\text{and}\,\, \underbar r_2^k\leq v(x,t;t_0,u_0,v_0)\leq \bar r_2^k \,\, \,\, \forall \,\,  t\geq t_0.
\end{equation}
Recall that \eqref{iter1-eq01} reads as
$$
   u_t \leq d_1\Delta u -\chi_1 \nabla w \cdot \nabla u +u\big(a_{0,\sup}-(a_{1,\inf}-k\frac{\chi_1}{d_3})u\big).
$$
Thus, by parabolic comparison principe and $\bar u_0 \leq \bar r_1^n\le \bar r_1^1$, we get  that
\begin{equation}\label{step1-optimal-eq00}
0\leq u(x,t;t_0,u_0,v_0)\leq \bar r_1^1 \quad \forall\,\,  t\geq t_0.
\end{equation}
Similarly, by parabolic comparison principe and  $\bar v_0 \leq \bar r_2^n\le  \bar r_2^1$, we can get that
\begin{equation}\label{step1-optimal-eq01}
0\leq v(x,t;t_0,u_0,v_0)\leq \bar r_2^1 \quad \forall\,\, t\geq t_0.
\end{equation}
Therefore, for $t\ge t_0$,
$$
u_t \geq d_1\Delta u -\chi_1 \nabla w \cdot \nabla u +u\big(a_{0,\inf}-a_{2,sup}\bar r_2^1-k\frac{\chi_1}{d_3}\bar r_1^1-(a_{1,\sup}-k\frac{\chi_1}{d_3})u\big)\big).
$$
By parabolic comparison principe and $\underbar r_1^1\le \underbar r_1^n \leq \underbar u_0$, we have that
\begin{equation}\label{step1-optimal-eq02}
\underbar r_1^1\leq u(x,t;t_0,u_0,v_0) \quad \forall \,\, t\geq t_0.
\end{equation}
Similarly, by parabolic comparison principe and $\underbar r_2^1\le \underbar r_2^n\leq \underbar v_0 $, we have that
\begin{equation}\label{step1-optimal-eq03}
\underbar r_2^1\leq v(x,t;t_0,u_0,v_0) \quad \forall \,\, t\geq t_0.
\end{equation}
Thus the result follows from \eqref{step1-optimal-eq00},\eqref{step1-optimal-eq02},\eqref{step1-optimal-eq01} and \eqref{step1-optimal-eq03}.

\medskip
\noindent {\bf Step 2.} Suppose that \eqref{iteration-lm-01-eq01-0} holds for $k=1,2,\cdots,l$ ($l\le n-1$), we prove that \eqref{iteration-lm-01-eq01-0} holds for $k=l+1$.

Indeed since \eqref{iteration-lm-01-eq01-0} holds  for $ 1\leq k\leq l,$ for $t\geq t_0,$ we get from the first and third equation of \eqref{u-v-w-eq00}  that
$$
   u_t \leq d_1\Delta u -\chi_1 \nabla w \cdot \nabla u +u\big(a_{0,\sup}-a_{2,\inf}\underbar r_2^l-k\frac{\chi_1}{d_3}\underbar r_1^l-(a_{1,\inf}-k\frac{\chi_1}{d_3})u\big).
$$
Thus,  by parabolic comparison principe and $ \bar u_0 \leq \bar r_1^{n}\le \bar r_1^{l+1}$, we get that
\begin{equation}\label{stepn-optimal-eq00}
 u(x,t;t_0,u_0,v_0) \leq \bar r_1^{l+1} \quad \forall t\geq t_0.
\end{equation}
Similarly, from the second and third equation of \eqref{u-v-w-eq00} and parabolic comparison principe, we get since $ \bar v_0 \leq \bar r_2^{n}\le \bar r_2^{l+1}$ that
\begin{equation}\label{stepn-optimal-eq01}
 v(x,t;t_0,u_0,v_0) \leq \bar r_2^{l+1} \quad \forall t\geq t_0.
\end{equation}
Next again from the first and third equation of \eqref{u-v-w-eq00} that
$$
   u_t \geq d_1\Delta u -\chi_1 \nabla w \cdot \nabla u +u\big(a_{0,\inf}-a_{2,sup}\bar r_2^l-k\frac{\chi_1}{d_3}\bar r_1^l-(a_{1,\sup}-k\frac{\chi_1}{d_3})u\big).
$$
Therefore by parabolic comparison principe we get since $\underbar r_1^{l+1}\le \underbar r_1^{n}\leq  \underbar u_0 $ that
\begin{equation}\label{stepn-optimal-eq02}
\underbar r_1^{l+1}\leq  u(x,t;t_0,u_0,v_0)  \quad \forall t\geq t_0.
\end{equation}
Similarly, from the second and third equation of \eqref{u-v-w-eq00} and parabolic comparison principe, we get since $\underbar r_2^{l+1}\le \underbar r_2^{n}\leq  \underbar u_0 $  that
\begin{equation}\label{stepn-optimal-eq03}
 \underbar r_2^{l+1} \leq v(x,t;t_0,u_0,v_0)  \quad \forall t\geq t_0.
\end{equation}
By \eqref{stepn-optimal-eq00}, \eqref{stepn-optimal-eq02}, \eqref{stepn-optimal-eq01} and \eqref{stepn-optimal-eq03}, \eqref{iteration-lm-01-eq01-0} holds for $k=l+1$.

\eqref{iteration-lm-01-eq01} then follows by induction.

(2) It can be proved by the similar arguments as those in (1).
\end{proof}

Now we prove Theorem \ref{attracting-rectangle-thm}.

\begin{proof} [Proof of Theorem \ref{attracting-rectangle-thm}]

(1)  First of all, from \eqref{itern-eq05aa}, the sequences $\underbar r_1^n$ and $\underbar r_2^n$ are nondecreasing bounded sequences of nonnegative real numbers and the sequences $\bar r_1^n$ and $\bar r_2^n$ non-increasing bounded sequences of nonnegative real numbers. Thus there exist real numbers $0< \underbar r_1\leq \bar r_1\le \bar A_1 $ and $0< \underbar r_2\leq \bar r_2\le\bar A_2$ such that
\begin{equation}\label{lim-eq000}
  \begin{cases}
  \lim_{n\to \infty}\underbar r_1^n=\underbar r_1,\quad \lim_{n\to \infty}\bar r_1^n=\bar r_1, \cr
  \lim_{n\to \infty}\underbar r_2^n=\underbar r_2, \quad \lim_{n\to \infty}\bar r_2^n=\bar r_2.
  \end{cases}
\end{equation}
 Combining \eqref{lim-eq000} with \eqref{r-iteration-eq}, we get
\begin{equation}\label{lim-def00}
  \bar r_1=\frac{a_{0,\sup}-a_{2,\inf}\underbar r_2-k\frac{\chi_1}{d_3}\underbar r_1}{a_{1,\inf}-k\frac{\chi_1}{d_3}},
\end{equation}
\begin{equation}\label{lim-def01}
 \bar r_2=\frac{b_{0,\sup}-b_{1,\inf}\underbar r_1-k\frac{\chi_1}{d_3}\underbar r_2}{b_{2,\inf}-l\frac{\chi_2}{d_3}},
\end{equation}
\begin{equation}\label{lim-def03}
\underbar r_1=\frac{a_{0,\inf}-a_{2,\sup}\bar r_2-k\frac{\chi_1}{d_3}\bar r_1}{a_{1,\sup}-k\frac{\chi_1}{d_3}},
\end{equation}
and
\begin{equation}\label{lim-def04}
\underbar r_2=\frac{b_{0,\inf}-b_{1,\sup}\bar r_1-l\frac{\chi_2}{d_3}\bar r_1}{b_{2,\sup}-l\frac{\chi_2}{d_3}}.
\end{equation}
Hence   $(\bar r_1,\bar r_2,\underbar r_1,\underbar r_2)$ is the unique  solution of
 \eqref{attracting-rectabgle-eq1}.

Next, we  prove \eqref{attracting-set-eq00}.  By  \eqref{itern-eq00a} and \eqref{lim-eq000}, for any $\epsilon>0$, we can choose $N$  such
\begin{equation*}\label{itern-eq000}
\begin{cases}
\underbar r_1-2\epsilon\leq \underbar r_1^N -\epsilon \le u(x,t;t_0,u_0,v_0) \le \bar r_1^N+\epsilon \le \bar r_1+2\epsilon \cr
\underbar r_2-2\epsilon\leq\underbar r_2^N -\epsilon \le v(x,t;t_0,u_0,v_0) \le \bar r_2^N+\epsilon \le \bar r_2+2\epsilon,
\end{cases}
\end{equation*}
for all $x\in\bar\Omega$, $t_0 \in \mathbb{R}$ and { $t\ge t_0+t^N_{\epsilon,\overline u_0,\overline v_0,\underline u_0,\underline v_0}$.} Thus \eqref{attracting-set-eq00} holds.

Now suppose that \eqref{invariant-set-eq00} holds. We prove \eqref{invariant-set-eq01}. Assume that
$$
  \underbar r_1\leq u_0\leq \bar r_1 \,\,\text{and}\,\, \underbar r_2\leq v_0\leq \bar r_2.
$$
Since the sequences $\underbar r_1^n$ and $\underbar r_2^n$ are nondecreasing bounded sequences of nonnegative real numbers and the sequences $\bar r_1^n$ and $\bar r_2^n$ non-increasing bounded sequences of nonnegative real numbers, from \eqref{lim-eq000}, we get for $n \in \mathbb{N}$ that
$$
 \underbar r_1^n\leq  \underbar r_1\leq u_0\leq \bar r_1\leq \bar r_1^n \,\,\text{and}\,\, \underbar r_2^n\leq \underbar r_2\leq v_0\leq \bar r_2\leq \bar r_2^n.
$$
By Lemma \ref{iteration-lm-01},
$$
\underbar r_1^n\leq u(x,t;t_0,u_0,v_0)\leq \bar r_1^n  \,\,\text{and}\,\,\underbar r_2^n\leq v(x,t;t_0,u_0,v_0)\leq \bar r_2^n \,\, \forall n \in \mathbb{N}\,\,, t\geq t_0.
$$
Then as $n \to \infty,$ we get
$$
  \underbar r_1\leq u(x,t;t_0,u_0,v_0)\leq \bar r_1 \,\,\text{and}\,\, \underbar r_2\leq v(x,t;t_0,u_0,v_0)\leq \bar r_2 \,\, \forall t\geq t_0.
$$
Thus \eqref{invariant-set-eq01} holds.

(2) It follows from the similar arguments as those in  (1).
\end{proof}

\section{Uniqueness and stability of coexistence states and Proof of Theorem \ref{thm-nonlinear-stability-001}}

In this section, we establish the nonlinear stability and uniqueness of entire solutions of system  \eqref{u-v-w-eq00} and prove Theorem \ref{thm-nonlinear-stability-001} and Corollary \ref{stability-cor}.

We first prove Theorem \ref{thm-nonlinear-stability-001}(3).

\begin{proof} [Proof of Theorem \ref{thm-nonlinear-stability-001}(3)]

Recall that \eqref{stability-cond-1-eq2} implies (H2) (see Remark \ref{rmk-stability}(2)).  For given $t_0\in\RR$ and $u_0,v_0\in {C^+(\bar{\Omega})}$ with  $u_0(\cdot),v_0(\cdot)\not =0$, let $(u(\cdot,t;t_0,u_0,v_0),v(\cdot,t;t_0,u_0,v_0)$, $w(\cdot,t;t_0,u_0,v_0))$ be
the solution of \eqref{u-v-w-eq00} given by Theorem \ref{thm-global-000}(2).  Note that  $(u(\cdot,t;t_0,u_0,v_0)$, $v(\cdot,t;t_0,u_0,v_0)$, $w(\cdot,t;t_0,u_0,v_0))$ exists for all $t>t_0$ and without loss of generality, we may assume that
$u_0(x),v_0(x)>0$ for all $x\in\bar\Omega$.

Let $(u^{**}(t),v^{**}(t),w^{**}(t))$ be a spatially homogeneous coexistence state of \eqref{u-v-w-eq00} (see Remark \ref{rmk-stability}(1)). We first prove that
\eqref{global-stability-1-eq1} and \eqref{global-stability-1-eq2}
hold.

To this end, let $(\overline u(t), \underline u(t), \overline v(t), \underline v(t))$ be as in { Lemma \ref{lem-1-ode000}}. Then
by Lemma \ref{lem-1-ode000}, we have
\begin{equation}
\label{thm-4-eq1}
\underline u(t)\le u(x,t;t_0,u_0,v_0)\le \bar u(t),\quad \underline v(t)\le v(x,t;t_0,u_0,v_0)\le \bar v(t)\quad \, \forall\,\, x\in\bar\Omega,\,\, t\ge t_0.
\end{equation}
We claim that for any $\epsilon>0$, there is $t_{\epsilon,{ u_0,v_0,t_0}}>0$  such that
 \begin{equation}
  \label{thm-4-eq2}
  \underline u(t)-\epsilon\le u^{**}(t)\le \bar u(t)+\epsilon,\quad \underline v(t)-\epsilon\le v^{**}(t)\le \bar v(t)+\epsilon\quad \forall\,\, t\ge t_0+t_{\epsilon,{ u_0,v_0,t_0}}.
 \end{equation}
{Indeed  let $(u^1(t),v^1(t))$ be the solution of \eqref{u-v-ode} with $(u^1(t_0),v^1(t_0))={(\underline u_0,\overline v_0)}.$
 Note
that  $(\overline u(t), \underline v(t))$ satisfies
\begin{equation}
\label{thm-4-ode001}
\begin{cases}
\overline u_t\geq \overline u(t)( a_0(t)-a_1(t)\overline u(t) -a_2(t)\underline v(t))\\
\underline v_t\leq \underline v(t)( b_0(t)-b_1(t)\overline u(t) -b_2(t)\underline v(t)).
\end{cases}
\end{equation}
Then by comparison principle for two species competition systems,
\begin{equation}
\label{aaux-stability-eq1}
u^1(t)\le \overline u(t) \quad {\rm and}\quad v^1(t)\ge \underline v(t)\quad \text{for all} \quad t\geq t_0.
\end{equation}
 Similarly, let $(u^2(t),v^2(t))$ be the solution of \eqref{u-v-ode} with $(u^2(t_0),v^2(t_0))={(\overline u_0,\underline v_0)}.$  Note that
\begin{equation}
\label{thm-4-ode002}
\begin{cases}
\underline u_t\leq \underline u(t)( a_0(t)-a_1(t)\underline u(t) -a_2(t)\overline v(t))\\
\overline v_t\geq \overline v(t)( b_0(t)-b_1(t)\underline u(t) -b_2(t)\overline v(t))\\
\end{cases}
\end{equation}
By comparison principle for two species competition systems again,
\begin{equation}
\label{aaux-stability-eq2}
u^2(t)\ge \underline u(t)\quad {\rm and}\quad  v^2(t)\le \overline v(t)\quad \text{for all}\quad t\geq t_0.
\end{equation}
By Lemma \ref{persistence-lm6},
\begin{equation*}
\lim_{t\to\infty}\big(|u^i(t)-u^{**}(t)|+|v^i(t)-v^{**}(t)|\big)=0\quad {\rm for}\quad i=1,2.
\end{equation*}
This implies that for any $\epsilon>0$,  there is $t_{\epsilon,{ u_0,v_0,t_0}}>0$  such that
 \begin{equation}
  \label{aaux-stability-eq3}
  u^2(t)-\epsilon\le u^{**}(t)\le u^1(t)+\epsilon,\quad  v^1(t)-\epsilon\le v^{**}(t)\le v^2(t)+\epsilon\quad \forall\,\, t\ge t_0+t_{\epsilon,{ u_0,v_0,t_0}}.
 \end{equation}
\eqref{thm-4-eq2} then follows from \eqref{aaux-stability-eq1}, \eqref{aaux-stability-eq2}, and \eqref{aaux-stability-eq3}.
}

By \eqref{thm-4-eq1}  and \eqref{thm-4-eq2}, to show \eqref{global-stability-1-eq1} and \eqref{global-stability-1-eq2},
 it suffices to show $0 \leq \ln \frac{\overline{u}(t)}{\underline{u}(t)}+ \ln \frac{\overline{v}(t)}{\underline{v}(t)}\longrightarrow 0  \, \text{as } t \to \infty.$ Assume  that $t>t_0$.  By \eqref{ode00}, we have
{\large
\begin{equation*}
\begin{cases}
\frac{\overline{u}'}{\overline{u}}=\frac{\chi_1}{d_3} \big(k \overline {u}+l\overline v-k\underline{u}-l\underline{v}\big)+ \big[a_0(t)-a_1(t)\overline u
-a_2(t)\underline{v}\big]\\
\frac{\underline{u}'}{\underline{u}}=\frac{\chi_1}{d_3} \big(k \underline {u}+l\underline v-k\overline{u}-l\overline{v}\big)+ \big[a_0(t)-a_1(t)\underline u
-a_2(t)\overline{v}\big]\\
\frac{\overline{v}'}{\overline{v}}=\frac{\chi_2}{d_3} \big(k \overline {u}+l\overline v-k\underline{u}-l\underline{v}\big)+ \big[b_0(t)-b_2(t)\overline v
-b_1(t)\underline{u}\big]\\
\frac{\underline{v}'}{\underline{v}}=\frac{\chi_2}{d_3}\big(k \underline {u}+l\underline v-k\overline{u}-l\overline{v}\big)+ \big[b_0(t)-b_2(t)\underline v
-b_1(t)\overline{u}\big].
\end{cases}
\end{equation*}}
This together with \eqref{stability-cond-1-eq1}  implies that
 \begin{equation}
 \label{aaux-stability-eq4}
 \frac{d}{dt} \Big(       \ln \frac{\overline{u}}{\underline{u}} +   \ln \frac{\overline{v}}{\underline{v}}   \Big)=
\frac{\overline{u}'}{\overline{u}} -   \frac{\underline{u}'}{\underline{u}}+\frac{\overline{v}'}{\overline{v}} -   \frac{\underline{v}'}{\underline{v}}\leq -\min\{\alpha_1,\beta_1\}   \left(  (\overline {u}-\underline{u})+(\overline {v}-\underline{v}) \right)\leq 0,
\end{equation}
where
$$ 0< \alpha_1=\inf_{t\in \RR}\{a_1(t)-b_1(t)-2k\frac{\chi_1+\chi_2}{d_3}\},$$ and $$ 0< \beta_1=\inf_{t \in \RR}\{b_2(t)-a_2(t)-2l\frac{\chi_1+\chi_2}{d_3}\}.$$
Thus by integrating \eqref{aaux-stability-eq4} over $(t_0,t),$ we get
$$
   0 \leq    \ln \frac{\overline{u}(t)}{\underline{u}(t)}+\ln \frac{\overline{v}(t)}{\underline{v}(t)} \leq \ln \frac{\overline{u}_0}{\underline{u}_0}+\ln \frac{\overline{v}_0}{\underline{v}_0}, \quad{\rm and\,\,\,  then}\quad  \frac{\overline{u}(t)\overline{v}(t)}{\underline{u}(t)\underline{v}(t)}  \leq  \frac{\overline{u}_0 \overline{v}_0}{\underline{u}_0 \underline{v}_0}.
$$
 We have by mean value theorem that

  $$-\left(  (\overline {u}-\underline{u})+(\overline {v}-\underline{v}) \right)\leq -\underline{u} \Big(     \ln \frac{\overline{u}}{\underline{u}} \Big) -\underline{v} \Big(     \ln \frac{\overline{v}}{\underline{v}} \Big)
 $$
Therefore
 \begin{equation}
 \label{aaux-stability-eq5}
 \frac{d}{dt} \Big(       \ln \frac{\overline{u}}{\underline{u}} +   \ln \frac{\overline{v}}{\underline{v}}   \Big) \leq      -\Big( \min\{\alpha_1,\beta_1\} \Big)  \Big(  \min\{\alpha_2,\beta_2\}  \Big)        \Big(     \ln \frac{\overline{u}}{\underline{u}}+ \ln \frac{\overline{v}}{\underline{v}} \Big),
 \end{equation}
where
$$ 0<\alpha_2:=\alpha_{2,{ t_0,u_0,v_0}}=\inf_{t\ge t_0} \overline{u}(t)   \frac{\underline{u}_0\underline{v}_0}{\overline{u}_0\overline{v}_0},$$ and $$ 0<\beta_2:=\beta_{2,{ t_0,u_0,v_0}}=\inf_{t\ge t_0} \overline{v}(t)   \frac{\underline{u}_0\underline{v}_0}{\overline{u}_0\overline{v}_0}.$$
By letting $\epsilon_{0,{ t_0,u_0,v_0}}=\big( \min\{\alpha_1,\beta_1\} \big)  \big(  \min\{\alpha_2,\beta_2\}  \big),$
we have $\epsilon_{0,{ t_0,u_0,v_0}}>0$
and
 \[   0\leq      \ln \frac{\overline{u}}{\underline{u}}+\ln \frac{\overline{v}}{\underline{v}} \leq   \left(\ln \frac{\overline{u}_0}{\underline{u}_0}+\ln \frac{\overline{v}_0}{\underline{v}_0}\right) e^{-\epsilon_{0,{ t_0,u_0,v_0}} (t-t_0)}   \to 0\quad {\rm as}\quad t\to\infty.
  \]
Hence \eqref{global-stability-1-eq1} and \eqref{global-stability-1-eq2} hold.

Next, we show that \eqref{u-v-w-eq00} has a unique spatially homogeneous coexistence state.  Suppose that $(u^{*}_i(t),v^{*}_i(t),w^{*}_i(t))$ ($i=1,2$)
are spatially homogeneous coexistence states of \eqref{u-v-w-eq00}. { Let
$u_{01}=\max\{\sup_{t\in\RR}u_1^*(t), \sup_{t\in\RR}u_2^*(t)\}$, $v_{01}=\min\{\inf_{t\in\RR}v_1^*(t),\inf_{t\in\RR}v_2^*(t)\}$,
$u_{02}=\min\{\inf_{t\in\RR}u_1^*(t)$, $\inf_{t\in\RR}u_2^*(t)\}$, and $v_{02}=\max\{\sup_{t\in\RR}v_1^*(t),\sup_{t\in\RR}v_2^*(t)\}$.}
For any $t_0\in\RR$, let $(u_i(t),v_i(t))=(u(t;t_0,u_{0i},v_{0i}),v(t;t_0,u_{0i},v_{0i}))$ be the solution of \eqref{u-v-ode} with
$(u(t_0;t_0,u_{0i},v_{0i}),v(t_0;t_0,u_{0i},v_{0i}))=(u_{0i},v_{0i})$ ($i=1,2$). By comparison principle for two species competition systems,
\begin{equation}
\label{aaux-stability-eq6}
u_2(t)\le u_i^*(t)\le u_1(t)\quad {\rm and}\quad
v_1(t)\le v_i^*(t)\le v_2(t)
\end{equation}
for $i=1,2$ and $t\ge t_0$.
{By the definition of coexistence states}, there are $0<\delta<K$  such that
\begin{equation}
\label{aaux-stability-eq7}
\delta\le u_i^{ *}(t)\le K,\quad \delta \le v_i^{ *}(t)\le K
\end{equation}
for $i=1,2$ and all $t\in\RR$. By the similar  arguments of \eqref{aaux-stability-eq4}, we have
$$
\frac{d}{dt}\ln \frac{u_1(t)}{u_2(t)}+\frac{d}{dt}\ln \frac{v_2(t)}{v_1(t)}\le -\min\{\tilde \alpha_1,\tilde \beta_1\}\big(u_1(t)-u_2(t)+ v_{ 2}(t)-v_{ 1}(t)\big)
$$
for $t\ge t_0$, where
$$
\tilde \alpha_1=\inf_{t\in\RR}(a_1(t)-b_1(t)),\quad \tilde \beta_1=\inf_{t\in\RR}( b_2(t)-a_2(t)).
$$
Let
$$ \tilde \alpha_2={ \inf_{t\in \RR}} {u_1^{ *}}(t)   \frac{u_{02}v_{01}}{u_{01}v_{02}},
\quad  \tilde \beta_2={ \inf_{t\in\RR}} v_2^{ *}(t)   \frac{u_{02}v_{01}}{u_{01}v_{02}}$$
and
 $\tilde \epsilon_0=\big( \min\{\tilde \alpha_1,\tilde \beta_1\} \big)  \big(  \min\{\tilde \alpha_2,\tilde \beta_2\}  \big).$
 Then by the similar arguments of \eqref{aaux-stability-eq5}, we have
 $$
  0\leq      \ln \frac{u_1(t+t_0)}{u_2(t+t_0)}+\ln \frac{v_2(t+t_0)}{v_1(t+t_0)} \leq   \left(\ln \frac{u_{01}}{u_{02}}+\ln \frac{v_{02}}{v_{01}}\right) e^{-\tilde \epsilon_0 t}
  $$
  for $t\ge t_0$. This together with \eqref{aaux-stability-eq7} implies that
  {
  $$
  0\leq      \ln \frac{u_1(t+t_0)}{u_2(t+t_0)}+\ln \frac{v_2(t+t_0)}{v_1(t+t_0)} \leq   2\ln \Big(\frac{K}{\delta}\Big)e^{-\tilde \epsilon_0 t}.
  $$
  }
  Therefore
  $$
  \lim_{t\to\infty}\ln \frac{u(t+t_0;t_0;u_{01},v_{01})}{u(t+t_0;t_0,u_{02},v_{02})}+\ln  \frac{v(t+t_0;t_0;u_{02},v_{02})}{v(t+t_0;t_0,u_{01},v_{01})}=0
  $$
  uniformly in $t_0\in\RR$.
  It then follows from \eqref{aaux-stability-eq6} that $u_1^*(t)\equiv u_2^*(t)$ and $v_1^*(t)\equiv v_2^*(t)$.
  { Indeed let $t \in \mathbb{R}$ be given. It follows from \eqref{aaux-stability-eq6} that
  \begin{align*}
    |u_1^*(t)-u_2^*(t)| & =|\tilde u^*(t)\ln \Big(\frac{u_1^*(t)}{u_2^*(t)}\Big)| \quad \text{(for some}\,\, \tilde u^*(t)\,\, \text{between}\,\, u_1^*(t)\,\, {\rm and}\,\, u_2^*(t))\\
     & \leq \max\{|u_1^*(t)|,|u_2^*(t)|\} |\ln \Big(\frac{u_1^*(t)}{u_2^*(t)}\Big)|\\
     & \leq K \ln\Big(\frac{u_1(t)}{u_2(t)}\Big) \\
     & \leq 2K\ln \Big(\frac{K}{\delta}\Big)e^{-\tilde \epsilon_0 (t-t_0)}, \,\, \forall t_0\leq t.
  \end{align*}
 And similarly
 $$
 |v_1^*(t)-v_2^*(t)|\leq 2K \ln \Big(\frac{K}{\delta}\Big)e^{-\tilde \epsilon_0 (t-t_0)}, \,\, \forall t_0\leq t.
 $$
 Therefore as $t_0 \to -\infty,$ we get $|u_1^*(t)-u_2^*(t)|=|v_1^*(t)-v_2^*(t)|{ =0 }.$
  } Hence \eqref{u-v-w-eq00}
  has a unique spatially homogeneous coexistence state.
  \end{proof}

  Next, we prove Theorem \ref{thm-nonlinear-stability-001}(1) and (2).

  \begin{proof}[Proof of Theorem \ref{thm-nonlinear-stability-001}(1) and (2)]
 { For given $u_0,v_0 \in {C^+(\bar{\Omega})},$  let $\overline{u}_0=\max_{x \in \bar \Omega}u_0(x),$ $\underline{u}_0=\min_{x \in \bar \Omega}u_0(x),$ $\overline{v}_0=\max_{x \in \bar \Omega}v_0(x)$ , $\underline{v}_0=\min_{x \in \bar \Omega}v_0(x)$.

 (1) By Theorem 1.2(1) and Remark 1.3(2), \eqref{u-v-w-eq00} has coexistence states.
 Let $(u^{**}(x,t)$, $v^{**}(x,t),w^{**}(x,t))$ be a coexistence state of \eqref{u-v-w-eq00}.}
 Let $q_1(t),$ $Q_1(t),$ $q_2(t)$ and $Q_2(t)$ be as in \eqref{q-eq1}, \eqref{q-eq2}, \eqref{q-eq3}  and \eqref{q-eq4}, respectively. By \eqref{stability-cond-2-eq1},
 $$\mu=\limsup_{t-s\to\infty}\frac{1}{t-s}\int_{s}^{t}\max\{q_1(\tau)- Q_1(\tau),q_2(\tau)- Q_2(\tau)\}d\tau  <0.
$$
 Fix $0<\epsilon<-\mu.$
 Then, for given  $u_0,v_0 \in C^0(\bar \Omega)$ with
  $\inf u_0>0$, $\inf v_0>0$, there exists   $T_{\epsilon,u_0,v_0}:= T_{\epsilon,\bar u_0,\bar v_0,\underline{u}_0, \underline{v}_0}>0$ such that for any $t_0\in\mathbb{R}$,
$$\underbar r_1-\epsilon \le u(\cdot,t_0+t;t_0;u_0,v_0)\le {\bar r_1}+\epsilon, \, \,\, \underbar r_1-\epsilon \le u^{**}(x,t) \le {\bar r_1}+\epsilon\,\, \,  \forall x\in\bar\Omega,\, \,   t\ge t_0+T_{\epsilon,u_0,v_0},
$$
$$\underbar r_2-\epsilon \le v(\cdot,t_0+t;t_0;u_0,v_0)\le {\bar r_2}+\epsilon, \, \,\, \underbar r_2-\epsilon \le v^{**}(x,t) \le {\bar r_2}+\epsilon\,\, \,  \forall x\in\bar\Omega,\, \,  t\ge t_0+T_{\epsilon,u_0,v_0},
$$
and
$$\int_{t_0}^{t_0+t}\max\{q_1(s)- Q_1(s),q_2(s)- Q_2(s)\}ds \le (\mu_1+\epsilon)t, \,  \forall\,\,  t\ge t_0+T_{\epsilon,u_0,v_0}.
$$

To simplify the notation,
set $u(t)=u(\cdot,t;t_0;u_0,v_0),$ $v(t)=v(\cdot,t;t_0;u_0,v_0),$ $u^{**}(t)=u^{**}(\cdot,t),$ and $v^{**}(t)=v^{**}(\cdot,t).$ Let $\psi=u-u^{**}$ and $\phi=v-v^{**}.$ Then $\psi$ satisfies
\begin{align}
\label{global-eq00}
\psi_t &=d_1\Delta \psi-\chi_1\nabla \cdot(\psi\nabla w)-\chi_1\nabla \cdot(u^{**}\nabla(w-w^{**})) \nonumber\\
&+ \psi\Big(a_0(t,x)-a_1(t,x)(u+u^{**})-a_2(t,x)v\Big)-a_2(t,x)u^{**}\phi,
\end{align}
and $\phi$ satisfies
\begin{align}
\label{global-eq00-1}
\phi_t &=d_2\Delta \phi-\chi_2\nabla \cdot(\phi\nabla w)-\chi_2\nabla \cdot(v^{**}\nabla(w-w^{**})) \nonumber\\
&+ \psi\Big(b_0(t,x)-b_1(t,x)u-b_2(t,x)(v+v^{**})\Big)-b_1(t,x)v^{**}\psi.
\end{align}

We first prove that $\int_{\Omega}\big(\psi^2+\phi^2\big)dx\to 0$ as $t\to\infty$ uniformly in $t_0\in\RR$.
To this end,
by multiplying \eqref{global-eq00} by $\psi_+$ and integrating over $\Omega,$ we get
\begin{multline*}
\frac{1}{2}\frac{d}{dt}\int_{\Omega}\psi_+^2+d_1\int_{\Omega}|\nabla \psi_+|^2=\chi_1\int_{\Omega}\psi_+\nabla \psi_+ \cdot \nabla  w+\chi_1\int_{\Omega}u^{**}\nabla \psi_{+} \cdot \nabla (w-w^{**})\\
+\int_{\Omega} \psi_+^2\big(a_0(t,x)-a_1(t,x)(u+u^{**})-a_2(t,x)v\big)-\int_{\Omega}a_2(t,x)u^{**}\psi_{+}\phi
\end{multline*}
for  a.e  $t>t_0$ (see \cite[(4.6)]{ITBWS16} for the reasons to have the above equality).
Then by integrating by parts, we get
\begin{align}
\label{new-aux-eq1}
\frac{1}{2}\frac{d}{dt}\int_{\Omega}\psi_+^2+d_1\int_{\Omega}|\nabla \psi_+|^2\le &-\frac{\chi_1}{2}\int_{\Omega}\psi_+^2\Delta w+\chi_1\int_{\Omega}u^{**}\nabla \psi_{+} \cdot \nabla (w-w^{**})\nonumber\\
&+\int_{\Omega} \psi_+^2\big(a_{0,\sup}(t)-a_{1,\inf}(t)(u+u^{**})-a_{2,\inf}(t)v\big)\nonumber\\
&-\int_{\Omega}a_2(t,x)u^{**}\psi_{+}\phi_++\int_{\Omega}a_2(t,x)u^{**}\psi_{+}\phi_-
\end{align}
for  a.e  $t>t_0$.

We have from the third equation of \eqref{u-v-w-eq00} that
\begin{equation}
\label{new-aux-eq2}
-\frac{\chi_1}{2}\int_{\Omega}\psi_+^2\Delta w=\frac{\chi_1}{2d_3}\int_{\Omega}\psi_+^2(ku+lv-\lambda w),
\end{equation}
and by Young's inequality
\begin{equation}
\label{new-aux-eq3}
\chi_1\int_{\Omega}u^{**}\nabla \psi_{+} \cdot \nabla (w-w^{**}) \leq d_1\int_{\Omega}|\nabla \psi_+|^2+\frac{\chi_1^2 (u^{**}_{\sup})^2}{4d_1}\int_{\Omega}|\nabla (w-w^{**})|^2.
\end{equation}

We claim that
\begin{equation}
\label{new-aux-eq4}
\int_{\Omega}|\nabla (w-w^{**})|^2\leq \frac{k^2}{2\lambda d_3}\int_{\Omega}\psi^2+\frac{l^2}{2\lambda d_3}\int_{\Omega}\phi^2.
\end{equation}
Indeed since $(u,v,w)$ and $(u^{**},v^{**},w^{**})$ are both solutions of \eqref{u-v-w-eq00}, from the third equation of \eqref{u-v-w-eq00} we get
$$
0=d_3\Delta(w-w^{**})+k(u-u^{**})+l(v-v^{**})-\lambda(w-w^{**})=d_3\Delta(w-w^{**})+k\psi+l\phi-\lambda(w-w^{**}).
$$
By multiplying this last equation by $w-w^{**}$ and integrating over $\Omega,$ we get by Green's Theorem
$$
0=-d_3\int_{\Omega}|\nabla (w-w^{**})|^2+k\int_{\Omega}\psi(w-w^{**})+l\int_{\Omega}\psi(w-w^{**})-\lambda\int_{\Omega}(w-w^{**})^2.
$$
By Young's inequality we get
$$d_3\int_{\Omega}|\nabla (w-w^{**})|^2+\lambda\int_{\Omega}(w-w^{**})^2\leq \frac{k^2}{2\lambda}\int_{\Omega}\psi^2+\frac{l^2}{2\lambda }\int_{\Omega}\phi^2+\lambda\int_{\Omega}(w-w^{**})^2,$$
and the claim thus follows.

By \eqref{new-aux-eq1}-\eqref{new-aux-eq4}, we have
\begin{align*}
\frac{1}{2}\frac{d}{dt}\int_{\Omega}\psi_+^2\le &\frac{\chi_1}{2d_3}\int_{\Omega}\psi_+^2(ku+lv-\lambda w)+\frac{ (k \chi_1u^{**}_{\sup})^2}{8\lambda d_1d_3}\int_{\Omega}\psi^2+\frac{ (l \chi_1u^{**}_{\sup})^2}{8\lambda d_1d_3}\int_{\Omega}\phi^2\\
& +\int_{\Omega} \psi_+^2\big(a_{0,\sup}(t)-a_{1,\inf}(t)(u+u^{**})-a_{2,\inf}(t)v\big)-a_{2,\inf}(t)\int_{\Omega}u^{**}\psi_{+}\phi_+\\
& +a_{2,\sup}(t)u^{**}_{\sup}
\int_{\Omega}\psi_{+}\phi_-
\end{align*}
for  a.e  $t>t_0$.
Thus by Young'{s} inequality, we have
\begin{align}
\label{uniqueness-eq00}
\frac{1}{2}\frac{d}{dt}\int_{\Omega}\psi_+^2\le &\frac{\chi_1}{2d_3}\int_{\Omega}\psi_+^2(ku+lv-\lambda w)+\frac{ (k \chi_1u^{**}_{\sup})^2}{8\lambda d_1d_3}\int_{\Omega}\psi^2+\frac{ (l \chi_1 u^{**}_{\sup})^2}{8\lambda d_1d_3}\int_{\Omega}\phi^2\nonumber \\
& +\int_{\Omega} \psi_+^2\big(a_{0,\sup}(t)-a_{1,\inf}(t)(u+u^{**})-a_{2,\inf}(t)v\big)+\frac{a_{2,\sup}(t)u^{**}_{\sup}}{2}\int_{\Omega}\psi_{+}^2\nonumber \\
&+\frac{a_{2,\sup}(t)u^{**}_{\sup}}{2}\int_{\Omega}\phi_{-}^2 { -a_{2,\inf}(t)\int_{\Omega}u^*\psi_{+}\phi_+}
\end{align}
for  a.e  $t>t_0$.

Similarly,  we have
\begin{align}
\label{uniqueness-eq01}
\frac{1}{2}\frac{d}{dt}\int_{\Omega}\psi_-^2\le &\frac{\chi_1}{2d_3}\int_{\Omega}\psi_-^2(ku+lv-\lambda w)+\frac{ (k \chi_1u^{**}_{\sup})^2}{8\lambda d_1d_3}\int_{\Omega}\psi^2+\frac{(l \chi_1u^{**}_{\sup})^2}{8\lambda d_1d_3}\int_{\Omega}\phi^2\nonumber \\
&+\int_{\Omega} \psi_-^2\big(a_{0,\sup}(t)-a_{1,\inf}(t)(u+u^*)-a_{2,\inf}(t)v\big)+\frac{a_{2,\sup}(t)u^{**}_{\sup}}{2}\int_{\Omega}\psi_{-}^2\nonumber\\
&+\frac{a_{2,\sup}(t)u^{**}_{\sup}}{2}\int_{\Omega}\phi_{+}^2{-a_{2,\inf}(t)\int_{\Omega}u^{**}\psi_{-}\phi_-}
\end{align}
for  a.e  $t>t_0$.
By adding \eqref{uniqueness-eq00} and \eqref{uniqueness-eq01}, we get
\begin{align}\label{uniqueness-eq02}
&\frac{1}{2}\frac{d}{dt}\int_{\Omega}\psi^2\nonumber\\
&\leq\int_{\Omega}\psi^2\Big(\frac{\chi_1}{2d_3}(ku+lv-\lambda w)+\frac{(k \chi_1u^{**}_{\sup})^2}{4\lambda d_1d_3}+a_{0,\sup}(t)-a_{1,\inf}(t)(u+u^{**})-a_{2,\inf}(t)v\Big)\nonumber\\
&+\frac{a_{2,\sup}(t)u^{**}_{\sup}}{2}\int_{\Omega}\psi^2+\Big(\frac{(l \chi_1u^{**}_{\sup})^2}{4\lambda d_1d_3}+\frac{a_{2,\sup}(t)u^*_{\sup}}{2}\Big)\int_{\Omega}\phi^2
 {-a_{2,\inf}(t)\int_{\Omega}u^*\big(\psi_{+}\phi_++\psi_{-}\phi_-\big)}
\end{align}
for   a.e.  $t>t_0$.

Similarly we have
\begin{align}\label{uniqueness-eq03}
&\frac{1}{2}\frac{d}{dt}\int_{\Omega}\phi^2\nonumber\\
&\leq\int_{\Omega}\phi^2\Big(\frac{\chi_2}{2d_3}(ku+lv-\lambda w)+\frac{(l \chi_2v^{**}_{\sup})^2}{4\lambda d_2d_3}+b_{0,\sup}(t)-b_{2,\inf}(t)(v+v^*)-b_{1,\inf}(t)u\Big)\nonumber\\
&+\frac{b_{1,\sup}(t)v^{**}_{\sup}}{2}\int_{\Omega}\phi^2+\Big(\frac{ (k \chi_2v^{**}_{\sup})^2}{4\lambda d_2d_3}+\frac{b_{1,\sup}(t)v^{**}_{\sup}}{2}\Big)\int_{\Omega}\psi^2
{ -b_{1,\inf}(t)\int_{\Omega}v^{**}\big(\psi_{+}\phi_++\psi_{-}\phi_-\big)}
\end{align}
for   a.e.  $t>t_0$.
By adding \eqref{uniqueness-eq02} and \eqref{uniqueness-eq03}, we get
\begin{align}\label{uniqueness-eq04}
&\frac{1}{2}\frac{d}{dt}\int_{\Omega}\big(\psi^2+\phi^2\big)\nonumber\\
&\leq\int_{\Omega}\psi^2\Big(\frac{\chi_1}{2d_3}(ku+lv-\lambda w)+\frac{k^2}{4\lambda d_3}\big(\frac{ (\chi_1u^{**}_{\sup})^2}{d_1}+\frac{ (\chi_2v^{**}_{\sup})^2}{d_2}\big)+a_{0,\sup}(t)-a_{1,\inf}(t)(u+u^{**})-a_{2,\inf}(t)v\Big)\nonumber\\
&+\frac{a_{2,\sup}(t)u^{**}_{\sup}+b_{1,\sup}(t)v^{**}_{\sup}}{2}\int_{\Omega}\psi^2\nonumber\\
&+\int_{\Omega}\phi^2\Big(\frac{\chi_2}{2d_3}(ku+lv-\lambda w)+\frac{l^2}{4\lambda d_3}\big(\frac{ (\chi_1u^{**}_{\sup})^2}{d_1}+\frac{ (\chi_2v^{**}_{\sup})^2}{d_2}\big)+b_{0,\sup}(t)-b_{2,\inf}(t)(v+v^{**})-b_{1,\inf}(t)u\Big)\nonumber\\
&+\frac{a_{2,\sup}(t)u^{**}_{\sup}+b_{1,\sup}(t)v^{**}_{\sup}}{2}\int_{\Omega}\phi^2
\end{align}
for  a.e.  $t>t_0$.
Thus { for $t\geq t_0+T_{\epsilon,u_0,v_0}$}, we have
\begin{align*}
\frac{1}{2}\frac{d}{dt}\int_{\Omega}\big(\psi^2+\phi^2\big)\leq\Big(Q_1(t)-q_1(t)+K_1(t,\epsilon)\Big)\int_{\Omega}\psi^2+\Big(Q_2(t)-q_2(t)+K_2(t,\epsilon)\Big)\int_{\Omega}\phi^2,
\end{align*}
where
$$
K_1(t,\epsilon)=\frac{\chi_1(k+l)}{d_3}\epsilon+\frac{k^2 \epsilon}{4\lambda d_3}\Big( \frac{\chi_1^2}{d_1} (2\bar r_1+\epsilon)+\frac{\chi_2^2}{d_2}(2\bar r_2+\epsilon)\Big)+\epsilon\Big(2a_{1,\inf}(t)+a_{2,inf}(t)+\frac{a_{2,\sup}(t)+b_{1,\sup}(t)}{2}\Big),
$$
and
$$
K_2(t,\epsilon)=\frac{\chi_2(k+l)}{d_3}\epsilon+\frac{l^2 \epsilon}{4\lambda d_3}\Big( \frac{\chi_1^2}{d_1} (2\bar r_1+\epsilon)+\frac{\chi_2^2}{d_2}(2\bar r_2+\epsilon)\Big)+\epsilon\Big(2b_{2,\inf}(t)+b_{1,inf}(t)+\frac{b_{1,\sup}(t)+a_{2,\sup}(t)}{2}\Big).
$$
Therefore { for $t\geq t_0+T_{\epsilon,u_0,v_0},$} we have
\begin{align*}
\frac{1}{2}\frac{d}{dt}\int_{\Omega}\big(\psi^2+\phi^2\big)\leq \left(h(t)+K(t,\epsilon)\right)\Big(  \int_{\Omega}\big(\psi^2+\phi^2\big) \Big),
\end{align*}
where
$$h(t)=\max\{ Q_1(t)-q_1(t), Q_2(t)-q_2(t)\}, $$ and $$ K(t,\epsilon)=|K_1(t,\epsilon)|+|K_2(t,\epsilon)|. $$
Note that $0\leq \sup_{t\in\RR}K(t,\epsilon) \to 0$ as $\epsilon \to 0$. Choose $\epsilon_0 \ll 1$ ($\epsilon_0<-\mu$) such  that
 $$0\leq \sup_{t\in\RR}K(t,\epsilon)<\frac{-\mu-\epsilon_0}{2}.
$$
By $\int_{t_0}^t h(s)ds \le (\mu+\epsilon_0)(t-t_0)$  for $t\ge t_0+T_{\epsilon,u_0,v_0}$,  we have
\begin{align*}
&\int_{\Omega}\big(\psi^2+\phi^2\big)\\
& \leq (\int_{\Omega}\psi^2(t_0+T_{\epsilon,u_0,v_0})+\phi^2(t_0+T_{\epsilon,u_0,v_0})) e^{2(\mu+\epsilon_0)(t-t_0-{ T_{\epsilon,u_0,v_0}})} e^{2(\frac{-\mu-\epsilon_0}{2})(t-t_0-{ T_{\epsilon,u_0,v_0}})} \\
&\leq (\int_{\Omega}\psi ^2(t_0+T_{\epsilon,u_0,v_0})+\phi^2(t_0+T_{\epsilon,u_0,v_0})) e^{(\mu+\epsilon_0)(t-t_0-{ T_{\epsilon,u_0,v_0}})}   \quad \forall \,\, t>t_0+T_{\epsilon,u_0,v_0}.
\end{align*}
Therefore
\vspace{-0.08in}\begin{equation}
\label{thm-4-eq3}
\lim_{t \to \infty}\|u(\cdot, t+t_0;t_0,u_0,v_0)-u^{**}(\cdot, t+t_0)\|_{L^2(\Omega)}=\lim_{t \to \infty}\|\psi( t+t_0)\|^2_{L^2(\Omega)}=0,
\end{equation}
and
\vspace{-0.08in}\begin{equation}
\label{thm-4-eq4}
\lim_{t \to \infty}\|v(\cdot,t+t_0;t_0,u_0,v_0)-v^{**}(\cdot, t+t_0)\|_{L^2(\Omega)}=\lim_{t \to \infty}\|\phi( t+t_0)\|^2_{L^2(\Omega)}=0.
\end{equation}
uniformly in $t_0\in\RR$.

 It  follows from \eqref{thm-4-eq3} and \eqref{thm-4-eq4} and similar arguments as in the proof  \cite[Theorem 1.4 (2)]{ITBWS16} that  for any $u_0,v_0 \in C^0(\bar{\Omega})$ with
  $\inf u_0>0$, $\inf v_0>0$, we have
\vspace{-0.08in}\begin{equation*}
\lim_{t \to \infty}\|u(\cdot, t+t_0;t_0,u_0,v_0)-u^{**}(\cdot,t+t_0)\|_{L^\infty(\Omega)}=0,
\end{equation*}
and
\vspace{-0.08in}\begin{equation*}
\lim_{t \to \infty}\|v(\cdot, t+t_0;t_0,u_0,v_0)-v^{**}(\cdot, t+t_0)\|_{L^\infty(\Omega)}=0.
\end{equation*}
uniformly in $t_0\in\RR$. It then follows that \eqref{new-global-stability-2-eq1} and \eqref{new-global-stability-2-eq2}
hold for  any $u_0,v_0 \in {C^+(\bar{\Omega})}$ with $u_0\not =0$ and $v_0\not =0$.

Next, we prove that \eqref{u-v-w-eq00} has a unique entire positive solution. { We are going to prove that in the following two steps.

\medskip
{\bf Step 1.} \eqref{u-v-w-eq00} has a unique entire positive solution $(u^*,v^*,w^*)$ which satisfy
\begin{equation}\label{uniqueness-prove-step1}
  \underbar r_1\leq u^*(x,t)\leq \bar r_1 \,\,\text{and}\,\, \underbar r_2\leq v(x,t)\leq \bar r_2 \,\, \forall x \in \bar{\Omega}\,\,\text{and}\,\, t\in \mathbb{R}.
\end{equation}
Suppose that $(u^*_1(x,t),v^*_1(x,t)$, $w^*_1(x,t))$ and $(u^*_2(x,t),v^*_2(x,t),w^*_2(x,t))$ are two entire positive solutions of \eqref{u-v-w-eq00} that satisfy \eqref{uniqueness-prove-step1}. We claim that
$$(u^*_1(x,t),v^*_1(x,t),w^*_1(x,t))\equiv (u^*_2(x,t),v^*_2(x,t),w^*_2(x,t))$$
for any $t\in\RR$.
Indeed,
Then  by assumption \eqref{stability-cond-2-eq1}, for given $\epsilon>0,$  there is $t_\epsilon>0$ such that
\begin{equation}\label{uniqueness-eq00c}
  \int_{t_0}^{t_0+t}\max\{q_1(s)- Q_1(s),q_2(s)- Q_2(s)\}ds \le (\mu_1+\epsilon)t, \,  \forall t_0\in\RR,\,\,  t\ge t_0+t_\epsilon.
\end{equation}
Then by the arguments in the proof of \eqref{thm-4-eq3} and \eqref{thm-4-eq4}, there is $\epsilon_0>$ such that  for any  $t,t_0\in\RR$ with $t\geq t_0+t_{\epsilon_0}$, we have
\begin{align}\label{uniqueness-equ000d}
&\|u^*_1(\cdot,t)-u^*_2(\cdot,t)\|_{L^2(\Omega)}+\|v^*_1(\cdot,t)-v^*_2(\cdot,t)\|_{L^2(\Omega)}\nonumber\\
&\leq (\int_{\Omega}(u^*_1-u^*_2) ^2(t_0+t_{\epsilon_0})+(v^*_1-v^*_2)^2(t_0+t_{\epsilon_0})) e^{(\mu+\epsilon_0)(t-t_0-t_{\epsilon_0})}.
\end{align}
Moreover, by \eqref{uniqueness-prove-step1}, we have
$$ m=\min\{\underbar r_1,\underbar r_2\}\leq u^*_i(x,t)\leq M=\max\{\bar r_1,\bar r_2\} \quad \text{and} \quad  m\leq v^*_i(x,t) \leq M, i=1,2.$$
By combining this with \eqref{uniqueness-equ000d}, we get
\begin{align}\label{uniqueness-equ00d}
&\|u^*_1(\cdot,t)-u^*_2(\cdot,t)\|_{L^2(\Omega)}+\|v^*_1(\cdot,t)-v^*_2(\cdot,t)\|_{L^2(\Omega)}\nonumber\\
&\leq 8M^2|\Omega| e^{(\mu+\epsilon_0)(t-t_0-t_{\epsilon_0})}\quad \forall t_0 \in \mathbb{R} \,\, \text{and} \,\,t\geq t_0+t_{\epsilon_0}.
\end{align}
Now let $t \in \mathbb{R}$ be given. Choose $t_0 \in \mathbb{R}$ such $t_0<t-t_{\epsilon_0}.$ Then by \eqref{uniqueness-equ00d}
\begin{align}\label{uniqueness-equ00e}
&\|u^*_1(\cdot,t)-u^*_2(\cdot,t)\|_{L^2(\Omega)}+\|v^*_1(\cdot,t)-v^*_2(\cdot,t)\|_{L^2(\Omega)}\nonumber\\
&\leq 8M^2|\Omega| e^{(\mu+\epsilon_0)(t-t_0-t_{\epsilon_0})} \to 0 \quad \text{as} \quad  t_0 \to -\infty.
\end{align}
Thus we get by continuity of solution that $u^*_1(x,t)=u^*_2(x,t)$ and $v^*_1(x,t)=v^*_2(x,t)$ for all $x \in \bar{\Omega}$ and $t\in \mathbb{R}.$

\medskip
{\bf Step 2.} We claim that  every positive entire solution of \eqref{u-v-w-eq00} satisfies
\eqref{uniqueness-prove-step1}.   Indeed, let $(u^*,v^*,w^*)$ be a positive entire solution of \eqref{u-v-w-eq00}. Then
for any  given $\epsilon>0$ there exists $t_{t_\epsilon, u^*,v^*}:=t_{\epsilon,\sup u^*,\sup v^*,\inf u^*,\inf v^*}$ such that
\begin{equation}\label{uniqueness-prove-step2}
\begin{cases}
\underbar r_1 -\epsilon \le u^*(x,t;t_0,u^*(\cdot,t_0),v^*(\cdot,t_0)) \le \bar r_1+\epsilon\cr
\underbar r_2 -\epsilon \le v^*(x,t;t_0,u^*(\cdot,t_0),v^*(\cdot,t_0)) \le \bar r_2+\epsilon
\end{cases}
\end{equation}
for all $x \in \bar{\Omega}$,  $t_0 \in \mathbb{R}$, and $t\geq t_0+t_{\epsilon, u^*, v^*}$.
Let $t \in \mathbb{R}$ be fix. We have $u^*(x,t)=u^*(x,t;t-t_{\epsilon, u^*, v^*},u^*(\cdot,t-t_{\epsilon,u^*, v^*}),v^*(\cdot,t-t_{\epsilon, u^*, v^*}))$ and $v^*(x,t)=v^*(x,t;t-t_{\epsilon, u^*, v^*},u^*(\cdot,t-t_{\epsilon, u^*, v^*}),v^*(\cdot,t-t_{\epsilon, u^*, v^*})).$ Then  by \eqref{uniqueness-prove-step2} with $t_0=t-t_{\epsilon, u^*, v^*}$, we get
$$
\underbar r_1 -\epsilon \le u^*(x,t) \le \bar r_1+\epsilon\,\,\text{and}\,\,\underbar r_2 -\epsilon \le v^*(x,t)\le \bar r_2+\epsilon.
$$
And since $\epsilon$ is arbitrary, we get as $\epsilon \to 0$ that
$$
\underbar r_1  \le u^*(x,t) \le \bar r_1\,\,\text{and}\,\,\underbar r_2 \le v^*(x,t)\le \bar r_2.
$$
and thus the claim holds.
}

(2)  It follows by the similar arguments as those  in (2).
\end{proof}

Finally, we prove Corollary \ref{stability-cor}.

\begin{proof} [Proof of Corollary \ref{stability-cor}]

 First, note that in this case $\chi_1=\chi_2=0$, condition  \eqref{new-condition-eq2} implies condition \eqref{stability-cond-2-eq1} for the global stability and uniqueness of positive entire solution of \eqref{u-v-eq00}. Recall that \eqref{new-condition-eq2} reads as
\begin{equation}
\label{new-condition-eq3}
\begin{cases}
a_{0,\sup}+\frac{a_{2,\sup}}{2}\bar r_1+\frac{b_{1,\sup}}{2}\bar r_2 <2a_{1,\inf}\underbar r_1+a_{2,\inf}\underbar r_2 \cr
b_{0,\sup}+\frac{b_{1,\sup}}{2}\bar r_2+\frac{a_{2,\sup}}{2}\bar r_1 <2b_{2,\inf}\underbar r_2+b_{1,\inf}\underbar r_1.
\end{cases}
\end{equation}
 Note that $\underbar r_1=\frac{a_{0,\inf}b_{2,\inf}-a_{2,\sup}b_{0,\sup}}{a_{1,\sup}b_{2,\inf}-a_{2,\sup}b_{1,\inf}},$ and $\underbar r_2=\frac{a_{1,\inf}b_{0,\inf}-a_{0,\sup}b_{1,\sup}}{a_{1,\inf}b_{2,\sup}-a_{2,\inf}b_{1,\sup}}$ (see Remark \ref{attracting-rectangle-rk}(3)).
 Hence \eqref{new-condition-eq3} is equivalent to
\begin{equation*}
\begin{cases}
\frac{a_{2,\sup}}{2}\bar r_1+\frac{b_{1,\sup}}{2}\bar r_2 <-a_{0,\sup}+2a_{1,\inf}\Big(\frac{a_{0,\inf}b_{2,\inf}-a_{2,\sup}b_{0,\sup}}{a_{1,\sup}b_{2,\inf}-a_{2,\sup}b_{1,\inf}}\Big)+a_{2,\inf}\underbar r_2 \cr\cr
\frac{b_{1,\sup}}{2}\bar r_2+\frac{a_{2,\sup}}{2}\bar r_1 <-b_{0,\sup}+2b_{2,\inf}\Big(\frac{a_{1,\inf}b_{0,\inf}-a_{0,\sup}b_{1,\sup}}{a_{1,\inf}b_{2,\sup}-a_{2,\inf}b_{1,\sup}}\Big)+b_{1,\inf}\underbar r_1,
\end{cases}
\end{equation*}
which is equivalent to
\begin{equation*}
\begin{cases}
\frac{a_{2,\sup}}{2}\bar r_1+\frac{b_{1,\sup}}{2}\bar r_2 <\frac{-a_{0,\sup}a_{1,\sup}b_{2,\inf}+a_{0,\sup}a_{2,\sup}b_{1,\inf}+2a_{1,\inf}a_{0,\inf}b_{2,\inf}-2a_{1,\inf}a_{2,\sup}b_{0,\sup}}{a_{1,\sup}b_{2,\inf}-a_{2,\sup}b_{1,\inf}}+a_{2,\inf}\underbar r_2 \cr\cr
\frac{b_{1,\sup}}{2}\bar r_2+\frac{a_{2,\sup}}{2}\bar r_1 <\frac{-b_{0,\sup}a_{1,\inf}b_{2,\sup}+b_{0,\sup}a_{2,\inf}b_{1,\sup}+2b_{2,\inf}a_{1,\inf}b_{0,\inf}-2b_{2,\inf}a_{0,\sup}b_{1,\sup}}{a_{1,\inf}b_{2,\sup}-a_{2,\inf}b_{1,\sup}}+b_{1,\inf}\underbar r_1,
\end{cases}
\end{equation*}
and so
\begin{equation*}
\label{new-cond-eq2}
\begin{cases}
\frac{a_{2,\sup}}{2}\bar r_1+\frac{b_{1,\sup}}{2}\bar r_2 <\frac{b_{2,\inf}\big(2a_{1,\inf}a_{0,\inf}-a_{0,\sup}a_{1,\sup}\big)-a_{2,\sup}\big(2a_{1,\inf}b_{0,\sup}-a_{0,\sup}b_{1,\inf}\big)}{a_{1,\sup}b_{2,\inf}-a_{2,\sup}b_{1,\inf}}+a_{2,\inf}\underbar r_2 \cr\cr
\frac{b_{1,\sup}}{2}\bar r_2+\frac{a_{2,\sup}}{2}\bar r_1 <\frac{a_{1,\inf}\big(2b_{2,\inf}b_{0,\inf}-b_{0,\sup}b_{2,\sup}\big)-b_{1,\sup}\big(2b_{2,\inf}a_{0,\sup}-b_{0,\sup}a_{2,\inf}\big)}{a_{1,\inf}b_{2,\sup}-a_{2,\inf}b_{1,\sup}}+b_{1,\inf}\underbar r_1.
\end{cases}
\end{equation*}
Therefore  \eqref{new-condition-eq3} is equivalent to
\begin{equation}
\label{new-cond-eq2-proof-stability-cor}
\begin{cases}
  a_{2,\sup}\big(\frac{\bar r_1}{2}+\frac{2a_{1,\inf}b_{0,\sup}-a_{0,\sup}b_{1,\inf}}{a_{1,\sup}b_{2,\inf}-a_{2,\sup}b_{1,\inf}}
\big)+\frac{b_{1,\sup}}{2}\bar r_2 <b_{2,\inf}\frac{2a_{1,\inf}a_{0,\inf}-a_{0,\sup}a_{1,\sup}}{a_{1,\sup}b_{2,\inf}-a_{2,\sup}b_{1,\inf}}+a_{2,\inf}\underbar r_2 \cr\cr
  b_{1,\sup}\big(\frac{\bar r_2}{2}+\frac{2b_{2,\inf}a_{0,\sup}-b_{0,\sup}a_{2,\inf}}{b_{2,\sup}a_{1,\inf}-b_{1,\sup}a_{2,\inf}}
\big)+\frac{a_{2,\sup}}{2}\bar r_1 <a_{1,\inf}\frac{2b_{2,\inf}b_{0,\inf}-b_{0,\sup}b_{2,\sup}}{b_{2,\sup}a_{1,\inf}-b_{1,\sup}a_{2,\inf}}+b_{1,\inf}\underbar r_1.
\end{cases}
\end{equation}

Next, suppose that $2a_{1,\inf}a_{0,\inf}-a_{0,\sup}a_{1,\sup}>0$ and $2b_{2,\inf}b_{0,\inf}-b_{0,\sup}b_{2,\sup}>0$. If $a_2$ and $b_1$ are such
small so that \eqref{stability-cond-1-eq1} and \eqref{new-cond-eq2-proof-stability-cor} hold, then conditions  \eqref{stability-cond-1-eq1} and  \eqref{stability-cond-2-eq1} hold and  Corollary \ref{stability-cor} follows from Theorem \ref{thm-nonlinear-stability-001}.
\end{proof}

\section{Appendix}

In this appendix, we discuss  the conditions under which \eqref{attracting-rectabgle-eq1}, that is,
\begin{equation}
\label{r-eqs}
 \begin{cases}
    (a_{1,\inf}-k\frac{\chi_1}{d_3}) \bar r_1={a_{0,\sup}-a_{2,\inf}\underbar  r_2-k\frac{\chi_1}{d_3}\underbar  r_1}\cr
   (b_{2,\inf}-l\frac{\chi_2}{d_3}) \bar r_2={b_{0,\sup}-b_{1,\inf}\underbar  r_1-k\frac{\chi_1}{d_3}\underbar  r_2}\cr
 (a_{1,\sup}-k\frac{\chi_1}{d_3})\underbar  r_1={a_{0,\inf}-a_{2,\sup}\bar  r_2-k\frac{\chi_1}{d_3}\bar  r_1}\cr
(b_{2,\sup}-l\frac{\chi_2}{d_3})\underbar  r_2={b_{0,\inf}-b_{1,\sup}\bar  r_1-l\frac{\chi_2}{d_3}\bar  r_2},
\end{cases}
 \end{equation}
 has a unique solution.

First,  note that
\begin{equation}\label{new-optimal-eq-002}
\underbar r_1=\frac{ a_{0,\inf}-a_{2,\sup}\bar r_2-k\frac{\chi_1}{d_3}\bar r_1}{a_{1,\sup}-k\frac{\chi_1}{d_3}},\quad  \bar r_1= \frac{ a_{0,\sup}-a_{2,\inf}\underbar r_2-k\frac{\chi_1}{d_3}\underbar r_1}{a_{1,\inf}-k\frac{\chi_1}{d_3}},
\end{equation}
\begin{equation}\label{new-optimal-eq-003}
\underbar r_2=\frac{ b_{0,\inf}-b_{1,\sup}\bar r_1-l\frac{\chi_2}{d_3}\bar r_2}{b_{2,\sup}-l\frac{\chi_2}{d_3}},\quad \bar r_2= \frac{ b_{0,\sup}-b_{1,\inf}\underbar r_1-l\frac{\chi_2}{d_3}\underbar r_2}{b_{2,\inf}-l\frac{\chi_2}{d_3}}.
\end{equation}
It follows from \eqref{new-optimal-eq-002} and \eqref{new-optimal-eq-003} that
\begin{align*}
  \bar r_1\big(a_{1,\inf}-k\frac{\chi_1}{d_3}\big) & = a_{0,\sup}-a_{2,\inf}\big\{\frac{ b_{0,\inf}-b_{1,\sup}\bar r_1-l\frac{\chi_2}{d_3}\bar r_2}{b_{2,\sup}-l\frac{\chi_2}{d_3}}\big\} \\
   & \,\,\,\, -k\frac{\chi_1}{d_3}\big\{\frac{ a_{0,\inf}-a_{2,\sup}\bar r_2-k\frac{\chi_1}{d_3}\bar r_1}{a_{1,\sup}-k\frac{\chi_1}{d_3}}\big\}.
\end{align*}
Thus
\begin{align*}
 & \bar r_1\big(a_{1,\inf}-k\frac{\chi_1}{d_3}\big)\big(a_{1,\sup}-k\frac{\chi_1}{d_3}\big) \big(b_{2,\sup}-l\frac{\chi_2}{d_3}\big)\\
   &= a_{0,\sup}\big(a_{1,\sup}-k\frac{\chi_1}{d_3}\big)\big(b_{2,\sup}-l\frac{\chi_2}{d_3}\big)\\
  &\,\,\,\, -a_{2,\inf}b_{0,\inf}\big(a_{1,\sup}-k\frac{\chi_1}{d_3}\big)+a_{2,\inf}b_{1,\sup}\big(a_{1,\sup}-k\frac{\chi_1}{d_3}\big)\bar r_1
  \\
   &\,\,\,\, +l\frac{\chi_2}{d_3}a_{2,\inf}\big(a_{1,\sup}-k\frac{\chi_1}{d_3}\big)\bar r_2 -k\frac{\chi_1}{d_3} a_{0,\inf}\big(b_{2,\sup}-l\frac{\chi_2}{d_3}\big)\\
  &\,\,\,\,  +k\frac{\chi_1}{d_3}a_{2,\sup}\big(b_{2,\sup}-l\frac{\chi_2}{d_3}\big)\bar r_2 +k^2\frac{\chi_1^2}{d_3^2}\big(b_{2,\sup}-l\frac{\chi_2}{d_3}\big)\bar r_1
\end{align*}
Therefore, we get
\begin{align}\label{optimal-L1-upp-bd-000}
&\bar r_1\Big\{\underbrace{\big(b_{2,\sup}-l\frac{\chi_2}{d_3}\big)\big[\big(a_{1,\inf}-k\frac{\chi_1}{d_3}\big)\big(a_{1,\sup}-k\frac{\chi_1}{d_3}\big)-(\frac{k\chi_1}{d_3})^2\big]-a_{2,\inf}b_{1,\sup}\big(a_{1,\sup}-k\frac{\chi_1}{d_3}\big)}_{\bar h_1(\chi_1,\chi_2)}\Big\}\nonumber\\
   &= \underbrace{ \big(b_{2,\sup}-l\frac{\chi_2}{d_3}\big)\big[a_{0,\sup}\big(a_{1,\sup}-k\frac{\chi_1}{d_3}\big)-k\frac{\chi_1}{d_3} a_{0,\inf}\big] -a_{2,\inf}b_{0,\inf}\big(a_{1,\sup}-k\frac{\chi_1}{d_3}\big)}_{\bar h_2(\chi_1,\chi_2)} \nonumber\\
   &\,\,\,\, +\bar r_2\Big\{\underbrace{l\frac{\chi_2}{d_3}a_{2,\inf}\big(a_{1,\sup}-k\frac{\chi_1}{d_3}\big)+k\frac{\chi_1}{d_3}a_{2,\sup}\big(b_{2,\sup}-l\frac{\chi_2}{d_3}\big)}_{\bar h_3(\chi_1,\chi_2)}\Big\}.
\end{align}
Similarly, we get
\begin{align}\label{optimal-L2-upp-bd-000}
&\bar r_2\Big\{\underbrace{\big(a_{1,\sup}-k\frac{\chi_1}{d_3}\big)\big[\big(b_{2,\inf}-l\frac{\chi_2}{d_3}\big)\big(b_{2,\sup}-l\frac{\chi_2}{d_3}\big)-(\frac{l\chi_2}{d_3})^2\big]-b_{1,\inf}a_{2,\sup}\big(b_{2,\sup}-l\frac{\chi_2}{d_3}\big)}_{\bar p_1(\chi_1,\chi_2)}\Big\}\nonumber\\
   &=  \underbrace{\big(a_{1,\sup}-k\frac{\chi_1}{d_3}\big)\big[b_{0,\sup}\big(b_{2,\sup}-l\frac{\chi_2}{d_3}\big)-l\frac{\chi_2}{d_3} b_{0,\inf}\big] -b_{1,\inf}a_{0,\inf}\big(b_{2,\sup}-l\frac{\chi_2}{d_3}\big)}_{\bar p_2(\chi_1,\chi_2)} \nonumber\\
   &\,\,\,\, +\bar r_1\Big\{\underbrace{k\frac{\chi_2}{d_3}b_{1,\inf}\big(b_{2,\sup}-l\frac{\chi_2}{d_3}\big)+l\frac{\chi_2}{d_3}b_{1,\sup}\big(a_{1,\sup}-k\frac{\chi_1}{d_3}\big)}_{\bar p_3(\chi_1,\chi_2)}\Big\},
\end{align}
\begin{align}\label{optimal-L1-upp-bd-000}
&\underbar r_1\Big\{\underbrace{\big(b_{2,\inf}-l\frac{\chi_2}{d_3}\big)\big[\big(a_{1,\inf}-k\frac{\chi_1}{d_3}\big)\big(a_{1,\sup}-k\frac{\chi_1}{d_3}\big)-(\frac{k\chi_1}{d_3})^2\big]-a_{2,\sup}b_{1,\inf}\big(a_{1,\inf}-k\frac{\chi_1}{d_3}\big)}_{\underbar h_1(\chi_1,\chi_2)}\Big\}\nonumber\\
   &= \underbrace{ \big(b_{2,\inf}-l\frac{\chi_2}{d_3}\big)\big[a_{0,\inf}\big(a_{1,\inf}-k\frac{\chi_1}{d_3}\big)-k\frac{\chi_1}{d_3} a_{0,\sup}\big] -a_{2,\sup}b_{0,\sup}\big(a_{1,\inf}-k\frac{\chi_1}{d_3}\big)}_{\underbar h_2(\chi_1,\chi_2)} \nonumber\\
   &\,\,\,\, +\underbar r_2\Big\{\underbrace{l\frac{\chi_2}{d_3}a_{2,\sup}\big(a_{1,\inf}-k\frac{\chi_1}{d_3}\big)+k\frac{\chi_1}{d_3}a_{2,\inf}\big(b_{2,\inf}-l\frac{\chi_2}{d_3}\big)}_{\underbar h_3(\chi_1,\chi_2)}\Big\},
\end{align}
and
\begin{align}\label{optimal-L2-upp-bd-000}
&\underbar r_2\Big\{\underbrace{\big(a_{1,\inf}-k\frac{\chi_1}{d_3}\big)\big[\big(b_{2,\inf}-l\frac{\chi_2}{d_3}\big)\big(b_{2,\sup}-l\frac{\chi_2}{d_3}\big)-(\frac{l\chi_2}{d_3})^2\big]-b_{1,\sup}a_{2,\inf}\big(b_{2,\inf}-l\frac{\chi_2}{d_3}\big)}_{\underbar p_1(\chi_1,\chi_2)}\Big\}\nonumber\\
   &= \underbrace{\big(a_{1,\inf}-k\frac{\chi_1}{d_3}\big)\big[b_{0,\inf}\big(b_{2,\inf}-l\frac{\chi_2}{d_3}\big)-l\frac{\chi_2}{d_3} b_{0,\sup}\big] -b_{1,\sup}a_{0,\sup}\big(b_{2,\inf}-l\frac{\chi_2}{d_3}\big)}_{\underbar p_2(\chi_1,\chi_2)} \nonumber\\
   &\,\,\,\, +\underbar r_1\Big\{\underbrace{k\frac{\chi_2}{d_3}b_{1,\inf}\big(b_{2,\inf}-l\frac{\chi_2}{d_3}\big)+l\frac{\chi_2}{d_3}b_{1,\inf}\big(a_{1,\inf}-k\frac{\chi_1}{d_3}\big)}_{\underbar p_3(\chi_1,\chi_2)}\Big\}.
\end{align}
Thus we get
\begin{equation}
\label{optimal-bds-000}
  \begin{cases}
    \bar h_1\bar r_1=\bar h_2+\bar h_3 \bar r_2\cr
    \bar p_1\bar r_2= \bar p_2+\bar p_3 \bar r_1 \cr
     \underbar h_1\underbar r_1= \underbar h_2+\underbar h_3 \underbar r_2\cr
      \underbar p_1\underbar r_2= \underbar p_2+\underbar p_3 \underbar r_1.
  \end{cases}
\end{equation}
Therefore,  we have

\begin{proposition}
\label{r-eq-solu}
\eqref{attracting-rectabgle-eq1} has a unique solution iff
 $\bar h_1\bar p_1\not = \bar h_3\bar p_3$ and $\underbar h_1\underbar p_1\not = \underbar h_3\underbar p_3$.
\end{proposition}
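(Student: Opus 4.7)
The plan is to reduce the four-equation linear system \eqref{r-eqs} for the unknowns $(\bar r_1, \bar r_2, \underbar r_1, \underbar r_2)$ to two independent $2\times 2$ linear systems by exploiting the algebraic manipulation already carried out in deriving \eqref{optimal-bds-000}. The key observation is that under (H5) the coefficients $a_{1,\inf}-k\chi_1/d_3$, $a_{1,\sup}-k\chi_1/d_3$, $b_{2,\inf}-l\chi_2/d_3$, and $b_{2,\sup}-l\chi_2/d_3$ are all strictly positive, so the third and fourth equations of \eqref{r-eqs} can be solved uniquely for $(\underbar r_1,\underbar r_2)$ in terms of $(\bar r_1,\bar r_2)$, and dually the first and second equations can be solved for $(\bar r_1,\bar r_2)$ in terms of $(\underbar r_1,\underbar r_2)$.

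First, I would substitute the expressions for $(\underbar r_1,\underbar r_2)$ coming from equations three and four into equations one and two. As the computation preceding the proposition demonstrates, this substitution produces precisely the first two identities of \eqref{optimal-bds-000}, namely
\[
\bar h_1 \bar r_1 - \bar h_3 \bar r_2 = \bar h_2, \qquad -\bar p_3\bar r_1 + \bar p_1\bar r_2 = \bar p_2.
\]
The substitution is reversible: given any $(\bar r_1,\bar r_2)$ satisfying this $2\times 2$ system and defining $(\underbar r_1,\underbar r_2)$ via equations three and four of \eqref{r-eqs}, all four equations of \eqref{r-eqs} are automatically satisfied. Hence the map from the solution set of \eqref{r-eqs} to the solution set of this $2\times 2$ system is a bijection. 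Repeating the argument with equations one and two used to eliminate $(\bar r_1,\bar r_2)$ yields a second bijection, this time with the underbar $2\times 2$ system
\[
\underbar h_1\underbar r_1 - \underbar h_3\underbar r_2 = \underbar h_2, \qquad -\underbar p_3\underbar r_1 + \underbar p_1\underbar r_2 = \underbar p_2.
\]

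Second, I would invoke the elementary fact that a $2\times 2$ linear system admits a unique solution if and only if its coefficient determinant is nonzero. The determinants of the two $2\times 2$ systems above are exactly $\bar h_1\bar p_1 - \bar h_3\bar p_3$ and $\underbar h_1\underbar p_1 - \underbar h_3\underbar p_3$, respectively. Combined with the two bijections constructed above, it follows that \eqref{r-eqs} has a unique solution if and only if both of these determinants are nonzero, which is the statement of the proposition.

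I do not anticipate any serious obstacle, as the main computation is already in place: the derivation of \eqref{optimal-bds-000} amounts to block Gaussian elimination on \eqref{r-eqs}. The only care required is in verifying that the substitution map is a genuine bijection in both directions, but this follows immediately from the positivity of the diagonal coefficients used in the elimination under (H5). As a side remark one could add, the two bijections together imply that the two determinant conditions in the proposition are in fact logically equivalent, so that either one alone would characterize uniqueness; stating them both as a conjunction is merely a matter of notational symmetry between the upper and lower bounds.
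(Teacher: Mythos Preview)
Your proposal is correct and follows essentially the same approach as the paper: the derivation leading up to \eqref{optimal-bds-000} \emph{is} the paper's proof, and the proposition is then just the elementary determinant criterion for the two decoupled $2\times 2$ blocks. Your write-up makes the bijection reasoning (why the block elimination is reversible under (H1)) more explicit than the paper does, and your closing remark---that the two determinant conditions are in fact equivalent, so either alone suffices---is a valid observation the paper does not make.
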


\begin{remark}
\label{attracting-region-rk}
\begin{itemize}
\item[(1)]When $\chi_1=\chi_2=0,$   \eqref{attracting-rectabgle-eq1} has a unique solution iff  \eqref{stability-cond-1-eq1} holds. Indeed, in this case, we have $\bar h_1=a_{1,\sup}\big(b_{2,\sup}a_{1,\inf}-a_{2,\inf}b_{1,\sup}\big),$ $\bar h_2=a_{1,\sup}\big(b_{2,\sup}a_{0,\sup}-a_{2,\inf}b_{0,\inf}\big),$ $\bar h_3=\underbar  h_3=0,$ $\underbar h_1=a_{1,\inf}\big(b_{2,\inf}a_{1,\sup}-a_{2,\sup}b_{1,\inf}\big),$ $\underbar h_2=a_{1,\inf}\big(b_{2,\inf}a_{0,\inf}-a_{2,\sup}b_{0,\sup}\big),$
    $\bar p_1=b_{2,\sup}\big(a_{1,\sup}b_{2,\inf}-b_{1,\inf}a_{2,\sup}\big),$ $\bar p_2=b_{2,\sup}\big(a_{1,\sup}b_{0,\sup}-b_{1,\inf}a_{0,\inf}\big),$ $\bar p_3=\underbar  p_3=0,$ $\underbar p_1=b_{2,\inf}\big(a_{1,\inf}b_{2,\sup}-b_{1,\sup}a_{2,\inf}\big),$ $\underbar p_2=b_{2,\inf}\big(a_{1,\inf}b_{0,\inf}-b_{1,\sup}a_{0,\sup}\big)$. Thus under the hypothesis \eqref{stability-cond-1-eq1}, \eqref{attracting-rectabgle-eq1} has the following  unique solution,
    $$
     \underbar r_1=\frac{b_{2,\inf}a_{0,\inf}-a_{2,\sup}b_{0,\sup}}{b_{2,\inf}a_{1,\sup}-a_{2,\sup}b_{1,\inf}},\quad
    \bar r_1=\frac{b_{2,\sup}a_{0,\sup}-a_{2,\inf}b_{0,\inf}}{b_{2,\sup}a_{1,\inf}-a_{2,\inf}b_{1,\sup}}
    $$
    $$
    \underbar r_2=\frac{a_{1,\inf}b_{0,\inf}-b_{1,\sup}a_{0,\sup}}{a_{1,\inf}b_{2,\sup}-b_{1,\sup}a_{2,\inf}},\quad
    \bar r_2=\frac{a_{1,\sup}b_{0,\sup}-b_{1,\inf}a_{0,\inf}}{a_{1,\sup}b_{2,\inf}-b_{1,\inf}a_{2,\sup}}.
    $$

\item[(2)] When the coefficients are constants i.e $a_i(t,x)=a_i$ and $b_i(t,x)=b_i,$ \eqref{attracting-rectabgle-eq1} has a unique solution iff {  \eqref{optimal-rect-cond-00c} holds.}
     Indeed, in this case, we have
    $$\bar h_1=\underbar h_1=(b_2-l\frac{\chi_2}{d_3})[(a_1-k\frac{\chi_1}{d_3})^2-(k\frac{\chi_1}{d_3})^2]-a_2b_1(a_1-k\frac{\chi_1}{d_3}),$$
    $$
    \bar h_2=\underbar h_2=(b_2-l\frac{\chi_2}{d_3})(a_1-2k\frac{\chi_1}{d_3})a_0-a_2b_0(a_1-k\frac{\chi_1}{d_3}),$$
    $$
    \bar h_3=\underbar h_3=a_2\big(l\frac{\chi_2}{d_3}(a_1-k\frac{\chi_1}{d_3})+k\frac{\chi_1}{d_3}(b_2-l\frac{\chi_2}{d_3})\big),$$
    $$
    \bar p_1=\underbar p_1=(a_1-k\frac{\chi_1}{d_3})[(b_2-l\frac{\chi_2}{d_3})^2-(l\frac{\chi_2}{d_3})^2]-a_2b_1(b_2-l\frac{\chi_2}{d_3}),$$
    $$
    \bar p_2=\underbar p_2=(a_1-k\frac{\chi_1}{d_3})(b_2-2l\frac{\chi_2}{d_3})b_0-b_1a_0(b_2-l\frac{\chi_2}{d_3}),
    $$
    and
    $$\bar p_3=\underbar p_3=b_1\big(l\frac{\chi_2}{d_3}(a_1-k\frac{\chi_1}{d_3})+k\frac{\chi_1}{d_3}(b_2-l\frac{\chi_2}{d_3})\big).$$
     Then
    \begin{align*}
      \bar h_1\bar p_1  & =\Big\{(b_2-l\frac{\chi_2}{d_3})[(a_1-k\frac{\chi_1}{d_3})^2-(k\frac{\chi_1}{d_3})^2]-a_2b_1(a_1-k\frac{\chi_1}{d_3})\Big\}\\
       &\,\,\, \times \Big\{(a_1-k\frac{\chi_1}{d_3})[(b_2-l\frac{\chi_2}{d_3})^2-(l\frac{\chi_2}{d_3})^2]-a_2b_1(b_2-l\frac{\chi_2}{d_3}) \Big\}\\
       &=(a_1-k\frac{\chi_1}{d_3})(b_2-l\frac{\chi_2}{d_3})\Big\{[(b_2-l\frac{\chi_2}{d_3})^2-(l\frac{\chi_2}{d_3})^2][(a_1-k\frac{\chi_1}{d_3})^2-(k\frac{\chi_1}{d_3})^2]+(a_2b_1)^2\Big\}\\
       &\,\,\, -(a_1-k\frac{\chi_1}{d_3})^2[(b_2-l\frac{\chi_2}{d_3})^2-(l\frac{\chi_2}{d_3})^2]a_2b_1-
       (b_2-l\frac{\chi_2}{d_3})^2[(a_1-k\frac{\chi_1}{d_3})^2-(k\frac{\chi_1}{d_3})^2]a_2b_1
    \end{align*}
    and
    \begin{align*}
      \bar h_3\bar p_3 & =a_2b_1\Big\{l\frac{\chi_2}{d_3}(a_1-k\frac{\chi_1}{d_3})+k\frac{\chi_1}{d_3}(b_2-l\frac{\chi_2}{d_3})\Big\}^2\\
       &=(l\frac{\chi_2}{d_3})^2(a_1-k\frac{\chi_1}{d_3})^2a_2b_1+(k\frac{\chi_1}{d_3})^2(b_2-l\frac{\chi_2}{d_3})^2a_2b_1
       +2lk\frac{\chi_1\chi_2}{d_3^2}a_2b_1.
    \end{align*}
    Thus
    \begin{align*}
    &\bar h_1\bar p_1-\bar h_3\bar p_3\\
    &=(a_1-k\frac{\chi_1}{d_3})(b_2-l\frac{\chi_2}{d_3})\Big\{[(b_2-l\frac{\chi_2}{d_3})^2-(l\frac{\chi_2}{d_3})^2][(a_1-k\frac{\chi_1}{d_3})^2-(k\frac{\chi_1}{d_3})^2]+(a_2b_1)^2\Big\}\\
    &+(a_1-k\frac{\chi_1}{d_3})(b_2-l\frac{\chi_2}{d_3})\Big\{-2(a_1-k\frac{\chi_1}{d_3})(b_2-l\frac{\chi_2}{d_3})a_2b_1-2lk\frac{\chi_1\chi_2}{d_3^2}a_2b_1\Big\}\\
    &=(a_1-k\frac{\chi_1}{d_3})(b_2-l\frac{\chi_2}{d_3})\Big\{(a_1-2k\frac{\chi_1}{d_3})(b_2-2l\frac{\chi_2}{d_3})-a_2b_1\Big\}(b_2a_1-b_1a_2).
    \end{align*}
    Similarly
    $$\bar h_2\bar p_1+\bar h_3\bar p_2=(a_1-k\frac{\chi_1}{d_3})(b_2-l\frac{\chi_2}{d_3})\Big\{(a_1-2k\frac{\chi_1}{d_3})(b_2-2l\frac{\chi_2}{d_3})-a_2b_1\Big\}(a_0b_2-a_2b_0),$$
    and
    $$\bar p_2\bar h_1+\bar p_3\bar h_2=(a_1-k\frac{\chi_1}{d_3})(b_2-l\frac{\chi_2}{d_3})\Big\{(a_1-2k\frac{\chi_1}{d_3})(b_2-2l\frac{\chi_2}{d_3})-a_2b_1\Big\}(b_0a_1-b_1a_0).$$
Therefore under the hypothesis \eqref{optimal-rect-cond-00c},  \eqref{attracting-rectabgle-eq1} has the following unique solution,
$$\underbar r_1=\bar r_1=\frac{\bar h_2\bar p_1+\bar h_3\bar p_2}{\bar h_1\bar p_1-\bar h_3\bar p_3}=\frac{a_0b_2-a_2b_0}{b_2a_1-b_1a_2},$$
and $$\underbar r_2=\bar r_2=\frac{\bar p_2\bar h_1+\bar p_3\bar h_2}{\bar h_1\bar p_1-\bar h_3\bar p_3}=\frac{b_0a_1-b_1a_0}{b_2a_1-b_1a_2}.$$
\end{itemize}
\end{remark}

\end{document}